\providecommand{\U}[1]{\protect\rule{.1in}{.1in}}
\newtheorem{theorem}{Theorem}
\theoremstyle{plain}
\newtheorem{corollary}{Corollary}
\newtheorem{lemma}{Lemma}
\newtheorem{proposition}{Proposition}
\newtheorem{remark}{Remark}
\numberwithin{equation}{section}
\begin{document}
\title[Askey--Wilson scheme]{On the families of polynomials forming a part of the Askey--Wilson scheme and
their probabilistic applications.}
\author{Pawe\l \ J. Szab\l owski}
\address{Emeritus in the Department of Mathematics and Information Sciences, \\
Warsaw University of Technology\\
ul. Koszykowa 75, 00-662 Warsaw, Poland}
\email{pawel.szablowski@gmail.com}
\thanks{The author is deeply grateful to the unknown referee for his positive opinion
of the paper as well as for pointing out numerous misprints, small mistakes as
well as many remarks improving its style.}
\date{April 2020}
\subjclass[2010]{Primary 33D45, 05A30; Secondary 42C05, 60E05}
\keywords{$q-$Hermite, Al-Salam-Chihara, Rogers-Szeg\"{o}, Chebyshev polynomials,
$q-$ultraspherical , continuous $q-$Hahn, Askey--Wilson polynomials,
connection coefficients, summing kernels, generalization of Poisson-Mehler
kernel, generalization of Kibble-Slepian formula, $q$-Wiener processes,
conditional moments, joint, marginal, conditional distributions.}
\dedicatory{To the memory of Richard Askey}
\begin{abstract}
We review the properties of six families of orthogonal polynomials that form
the main bulk of the collection called the Askey--Wilson scheme of
polynomials. We give connection coefficients between them as well as the
so-called linearization formulae and other useful important finite and
infinite expansions and identities. An important part of the paper is the
presentation of probabilistic models where most of these families of
polynomials appear. These results were scattered within the literature in
recent 15 years. We put them together to enable an overall outlook on these
families and understand their crucial r\^{o}le in the attempts to generalize
Gaussian distributions and find their bounded support generalizations. The
paper is based on 65 items in the predominantly recent literature.

\end{abstract}
\maketitle

\part{Introduction\label{intr}}

The aim of this paper is to review the basic properties of the orthogonal
polynomials of one variable, which constitute a part of the so-called
Askey-Wilson (briefly AW) scheme. The AW scheme of orthogonal polynomials is a
large family of orthogonal polynomials that can be divided into 5 subsets of
orthogonal polynomials and each of these subsets is characterized by the
common number of parameters that define it. Even though we do not analyze and
review all families of polynomials that form AW scheme, but only a part of it
for the sake of brevity, we will refer to this part as to the AW scheme. The
whole AW scheme is presented in, e.g., \cite{KLS}. In fact, among parameters,
there is one playing a special r\^{o}le. It will be traditionally denoted by
the letter $q$ and, usually, it will be assumed that $q\in(-1,1)$. All other
parameters defining appropriate families of polynomials will be denoted by
$a,b,c,d$. Hence we will discuss the family characterized by $1$ parameter
$q$. In fact, in the one-parameter family, there are two families of
polynomials that have some importance. Namely, one of them constitutes the
so-called Rogers-Szeg\"{o} family that are not orthogonal and closely related
with them the so-called $q-$Hermite polynomials. The next are the families
with $2$ parameters, i.e.,$q$ and $a$. Here we will have the so-called big
$q-$Hermite polynomials and the so-called Rogers or $q-$ultraspherical
polynomials. Then we have $3$ parameter families of polynomials. Here are the
so-called Al-Salam--Chihara polynomials. After that, we have $4$ parameter
families including, the so-called continuous dual Hahn polynomials. Finally,
we have the so-called Askey-Wilson polynomials a large family of polynomials
depending on $5$ parameters $q,a,b,c,d$.

Apart from these families, we will consider the so-called Chebyshev
polynomials of the first and the second kind, whose some linear combinations
are important as the special cases for $q\allowbreak=\allowbreak0$ of the
families of polynomials, considered in this paper. We will also consider the
so-called Hermite (more precisely the so-called probabilistic Hermite
polynomials), that is monic polynomials that are orthogonal with respect to
$\exp(-x^{2}/2)/\sqrt{2\pi},$ i.e.,the density of the Normal $N(0,1)$ (or
Gaussian) distribution.

Why these families of polynomials are important?. Well, because firstly, they
have nice applications in mathematical analysis (see e.g., monograph of B.
Simon \cite{Sim98}), combinatorics (see e.g., papers of et al. Corteel, in
particular, \cite{Corteel10} and the reference there) and what is surprising
in representation theory of compact Lie groups (see e.g., monograph
\cite{Vilen95} and the later papers like e.g., \cite{Nest10}), probability
theory, providing examples of compactly supported families of distributions
and stochastic processes. We will present the most known of these
probabilistic applications, since they seem to be less obvious and popular
among the probabilists as well as among specialists in the so-called
$q-$series theory.

The polynomials that we are going to present are of two types that can be
distinguished on the basis of possible applications. They differ, in fact, in
so to say, re-scaling. That is, if one transforms linearly the support of the
measure, that makes a particular family of polynomials orthogonal, then we
switch between these two types of polynomials. The main difference between
them, as stated above, lies in the possible applications. Namely, the
polynomials of the first type have applications in the theory of orthogonal
polynomials and combinatorics. The polynomials of the second type appear
mostly in stochastic applications since the linear transformation of the
support enables consideration of the case $q\rightarrow1^{-}$ which results in
making the Gaussian case included in all applications. That enables some
comparisons of the properties of the probabilistic models built with the help
of AW polynomials with the properties of the Gaussian case. For probabilists
such comparisons are very important.

Basically, the paper consists of three parts: the first one, i.e., Part
\ref{intr} is an introduction as well as is dedicated to the notation and the
introduction of some auxiliary families of polynomials. The second one, i.e.,
Part \ref{sup[-1,1]} is devoted to the polynomials of the first type. The
third one, i.e., Part \ref{prob} deals with the stochastic application and
description of new interesting families of distributions and stochastic
processed that emerge here. This apart presents some infinite expansions
called kernels. The examination, if such kernel is nonnegative for a given
range of unknowns, is important in probability theory leading to the so-called
Lancaster kernels.

Part \ref{sup[-1,1]} will be divided into sections referring to the number of
parameters the polynomials presented in this subsection depend upon. Since all
polynomials, except for some auxiliary ones, will depend on the parameter $q$
we will not count it. Hence the sections will present polynomials depending on
$0,1,2,3,4$ parameters.

All polynomials, that we are going to present here, belong to the $q-$series
theory, a part of mathematical analysis. This branch of analysis developed
rapidly in the last $20$ years of the previous century. The new insight into
this theory and some surprising applications can be found in the works of
Lusztig and his followers like, say, F. Brenti or L. J. Billera and many
others. These works indicate connections of $q-$series theory with Macdonald
polynomials and counting points of various algebraic varieties that arise in
group representation theory and/or representation of Weil group.

As far as the probabilistic applications of AW polynomials are concerned,
which is the main application presented in this review, about $20$ years ago,
there appeared the paper of W. Bryc \cite{bryc1} where the links between
$q-$series theory and the theory of stochastic processes came into the light
for the first time. Since then, several papers were published, where
practically all families of orthogonal polynomials mentioned in this paper
have their probabilistic interpretation (after proper re-scaling, described
below). In Section, \ref{prob} we will present all known, so far, of these applications.

To define these polynomials and briefly describe their properties one has to
adopt the notation used in $q$-series theory. Moreover, the terminology
concerning these polynomials is not fixed and under the same name appears
sometimes different, but related to one another families of polynomials. Thus,
one has to be aware of these differences.

That is why the next section of the paper is devoted to notations, definitions
and the next one \ref{pom} to the presentation of some auxiliary families of
polynomials. The following Part \ref{sup[-1,1]} is dedicated to the
definitions and basic properties of the families of polynomials that form AW
scheme and the measures that make them orthogonal are supported on $[-1,1]$.
We present there also some 'finite expansions' formulae establishing
relationships between these families of polynomials, including listing known
the so-called 'connection coefficients' and 'linearization' formulae. The last
Part \ref{prob} is devoted to the probabilistic applications, that is, more
precisely, to the description of probabilistic models where polynomials from
AW scheme appear including infinite expansions, in particular, kernels
involving discussed polynomials. It consists of two subsections the first of
which is devoted to different generalizations of the Mehler expansion formula,
the second one to some useful infinite expansions including reciprocals of
some kernels that have an auxiliary meaning.

\section{Notations and definitions}

$q$ is a parameter. We will assume that $-1<q\leq1$ unless otherwise stated.
The case $q\allowbreak=\allowbreak1$ may not always be considered directly,
but sometimes as left-hand side limit ( i.e.,$q\longrightarrow1^{-}$). We will
point out these cases.

We will use traditional notations of the $q-$series theory i.e.,%
\[
\left[  0\right]  _{q}\allowbreak=\allowbreak0,~\left[  n\right]
_{q}\allowbreak=\allowbreak1+q+\ldots+q^{n-1}\allowbreak,\left[  n\right]
_{q}!\allowbreak=\allowbreak\prod_{j=1}^{n}\left[  j\right]  _{q},\text{with
}\left[  0\right]  _{q}!\allowbreak=1\text{,}%
\]%
\[%
%TCIMACRO{\QATOPD{[}{]}{n}{k}}%
%BeginExpansion
\genfrac{[}{]}{0pt}{}{n}{k}%
%EndExpansion
_{q}=\left\{
\begin{array}
[c]{ccc}%
\frac{\left[  n\right]  _{q}!}{\left[  n-k\right]  _{q}!\left[  k\right]
_{q}!} & , & n\geq k\geq0\\
0 & , & \text{otherwise}%
\end{array}
\right.  \text{.}%
\]
$\binom{n}{k}$ will denote the ordinary, well known binomial coefficient.

It is useful to use the so-called $q-$Pochhammer symbol for $n\geq1:$%
\[
\left(  a|q\right)  _{n}=\prod_{j=0}^{n-1}\left(  1-aq^{j}\right)  ,~~\left(
a_{1},a_{2},\ldots,a_{k}|q\right)  _{n}\allowbreak=\allowbreak\prod_{j=1}%
^{k}\left(  a_{j}|q\right)  _{n}\text{,}%
\]
with $\left(  a|q\right)  _{0}\allowbreak=\allowbreak1$.

Often $\left(  a|q\right)  _{n}$ as well as $\left(  a_{1},a_{2},\ldots
,a_{k}|q\right)  _{n}$ will be abbreviated to $\left(  a\right)  _{n}$ and
\newline$\left(  a_{1},a_{2},\ldots,a_{k}\right)  _{n},$ if it will not cause misunderstanding.

We will also use the following symbol $\left\lfloor n\right\rfloor $ to denote
the largest integer not exceeding $n$.

It is worth to mention the following two formulae, that are well known.
Namely, the following formulae are true for $\left\vert t\right\vert <1,$
$\left\vert q\right\vert <1$ (derived already by Euler, see \cite{Andrews1999}
Corollary 10.2.2)
\begin{align}
\frac{1}{(t)_{\infty}}\allowbreak &  =\allowbreak\sum_{k\geq0}\frac{t^{k}%
}{(q)_{k}}\text{,}\label{binT}\\
(t)_{\infty}\allowbreak &  =\allowbreak\sum_{k\geq0}(-1)^{k}q^{\binom{k}{2}%
}\frac{t^{k}}{(q)_{k}}\text{.} \label{obinT}%
\end{align}

It is easy to notice that $\left(  q\right)  _{n}=\left(  1-q\right)
^{n}\left[  n\right]  _{q}!$ and that%
\[%
%TCIMACRO{\QATOPD{[}{]}{n}{k}}%
%BeginExpansion
\genfrac{[}{]}{0pt}{}{n}{k}%
%EndExpansion
_{q}\allowbreak=\allowbreak\allowbreak\left\{
\begin{array}
[c]{ccc}%
\frac{\left(  q\right)  _{n}}{\left(  q\right)  _{n-k}\left(  q\right)  _{k}}
& , & n\geq k\geq0\\
0 & , & \text{otherwise}%
\end{array}
\right.  \text{.}%
\]
\newline The above-mentioned formula is just an example where direct setting
$q\allowbreak=\allowbreak1$ is senseless however, the passage to the limit
$q\longrightarrow1^{-}$ makes sense.

Notice that, in particular,
\begin{equation}
\left[  n\right]  _{1}\allowbreak=\allowbreak n,~\left[  n\right]
_{1}!\allowbreak=\allowbreak n!,~%
%TCIMACRO{\QATOPD{[}{]}{n}{k}}%
%BeginExpansion
\genfrac{[}{]}{0pt}{}{n}{k}%
%EndExpansion
_{1}\allowbreak=\allowbreak\binom{n}{k},~(a)_{1}\allowbreak=\allowbreak
1-a,~\left(  a|1\right)  _{n}\allowbreak=\allowbreak\left(  1-a\right)  ^{n}
\label{q1}%
\end{equation}
and
\begin{equation}
\left[  n\right]  _{0}\allowbreak=\allowbreak\left\{
\begin{array}
[c]{ccc}%
1 & \text{if} & n\geq1\\
0 & \text{if} & n=0
\end{array}
\right.  ,~\left[  n\right]  _{0}!\allowbreak=\allowbreak1,~%
%TCIMACRO{\QATOPD{[}{]}{n}{k}}%
%BeginExpansion
\genfrac{[}{]}{0pt}{}{n}{k}%
%EndExpansion
_{0}\allowbreak=\allowbreak1,~\left(  a|0\right)  _{n}\allowbreak
=\allowbreak\left\{
\begin{array}
[c]{ccc}%
1 & \text{if} & n=0\\
1-a & \text{if} & n\geq1
\end{array}
\right.  . \label{q2}%
\end{equation}

$i$ will denote imaginary unit, unless otherwise clearly stated. Let us define also:%

\begin{align}
\left(  ae^{i\theta},ae^{-i\theta}\right)  _{\infty}  &  =\prod_{k=0}^{\infty
}v\left(  x|aq^{k}\right)  \text{,}\label{rozklv}\\
\left(  te^{i\left(  \theta+\phi\right)  },te^{i\left(  \theta-\phi\right)
},te^{-i\left(  \theta-\phi\right)  },te^{-i\left(  \theta+\phi\right)
}\right)  _{\infty}  &  =\prod_{k=0}^{\infty}w\left(  x,y|tq^{k}\right)
\text{,}\label{rozklw}\\
\left(  ae^{2i\theta},ae^{-2i\theta}\right)  _{\infty}  &  =\prod
_{k=0}^{\infty}l\left(  x|aq^{k}\right)  \text{,} \label{rozkll}%
\end{align}
where,
\begin{align}
v(x|a)\allowbreak &  =\allowbreak1-2ax+a^{2},\label{vxa}\\
l(x|a)  &  =(1+a)^{2}-4x^{2}a,\label{lsa}\\
w(x,y|a)  &  =(1-a^{2})^{2}-4xya(1+a^{2})+4a^{2}(x^{2}+y^{2}) \label{wxya}%
\end{align}
and, as usually in the $q-$series theory, $x\allowbreak=\allowbreak\cos\theta$
and $y=\cos\phi$.

We will use through the paper the following way of obtaining infinite
expansions of the type
\[
\sum_{j\geq0}a_{n}p_{n}(x),
\]
that are convergent almost everywhere on some subset of $\mathbb{R}$. Namely,
in sight of \cite{Szab4} the following setting is considered. Say we have two
measures on $\mathbb{R}$ both having densities $f$ and $g$. Suppose that, we
know that $\int(f/g)^{2}gdx$ is finite. Further suppose also that we know two
families of orthogonal polynomials $\left\{  \alpha_{n}\right\}  $ and
$\left\{  \beta_{n}\right\}  $, such that the first one is orthogonal with
respect to the measure having the density $f$ and the other is orthogonal with
respect to the measure having the density $g$. Then we know that $f/g$ can be
expanded in an infinite series
\[
\sum_{n\geq0}a_{n}\beta_{n}(x),
\]
that is convergent in $L^{2}(\mathbb{R},g)$. We know in particular that
$\sum_{n\geq0}\left\vert a_{n}\right\vert ^{2}<\infty$. If additionally
\[
\sum_{n\geq0}\left\vert a_{n}\right\vert ^{2}\log^{2}(n+1)<\infty,
\]
then by the Rademacher--Meshov theorem, we deduce that the series in question
converges not only in $L^{2},$ but also almost everywhere with respect to $g$.

The point is that, in the sequel, all considered densities will be supported
only on the segment $[-1,1],$ for the proper values of some additional
parameters all these densities are bounded and hence their ratios will be
square integrable. Thus we will get the condition $\sum_{n\geq0}\left\vert
a_{n}\right\vert ^{2}<\infty$ satisfied for free. Moreover, in all cases we
will have $\left\vert a_{n}\right\vert ^{2}\leq r^{n}$ for some $r<1$. Hence
the condition $\sum_{n\geq0}\left\vert a_{n}\right\vert ^{2}\log
^{2}(n+1)<\infty$ is also naturally satisfied. If one knows the connection
coefficients between the families $\left\{  \alpha_{n}\right\}  $ and
$\left\{  \beta_{n}\right\}  ,$ i.e.,a set of coefficients $\left\{
c_{k,n}\right\}  _{n\geq1,0\leq k\leq n}$ satisfying%
\[
\beta_{n}\allowbreak(x)=\allowbreak\sum_{k=0}^{n}c_{k,n}\alpha_{k}(x),
\]
then $a_{n}\allowbreak=\allowbreak c_{0,n}/\int\beta_{n}^{2}gdx$.

We will refer to this type of reasoning as D(ensity) E(expansion) I(idea)
(*,*) (that is DEI(*,*)), where the stars point out to the connection
coefficient formulae and the formula for $\int\beta_{n}^{2}gdx$.

Apart from these expansions that lead to new, so to say, generating functions
of some families of polynomials we will consider, together with some families
of orthogonal polynomials say, $\left\{  p_{n}(x|q)\right\}  $ depending on
the parameter $\left\vert q\right\vert <1$, another auxiliary family is
defined by:
\[
\hat{p}_{n}(x|q)\allowbreak=\allowbreak\left\{
\begin{array}
[c]{ccc}%
(-1)^{n}q^{\binom{n}{2}}p_{n}(x|q^{-1}) & \text{if} & q\neq0\\
(-1)^{n}\lim_{q\rightarrow0}q^{\binom{n}{2}}p_{n}(x|q^{-1}) & \text{if} & q=0
\end{array}
\right.  .
\]
These polynomials also satisfy a three-term recurrence related to the one
satisfied by the polynomials $\left\{  p_{n}\right\}  $ however, they are not
orthogonal with respect to a nonnegative measure. They will play an important,
auxiliary r\^{o}le. It will turn out, that they are equal to some finite sums,
whose values and properties can be easily found and examined due to the simple
form of a three-term recurrence they satisfy. We will try to stick to the
convention, that these auxiliary polynomials will be denoted by a "hut" above
the name of the orthogonal polynomials, with a few exceptions where the names
of these auxiliary polynomials are already set.

The following convention will help in orderly listing of the properties of the
discussed families of polynomials. Namely, the family of polynomials whose
names start with say a letter $A$ will be referred to as $A$ (similarly for
the lower case $a)$.

Let us also define the following sets of polynomials and present their
generating functions and measures with respect to which these polynomials are
orthogonal if these measures are positive.

\section{Important auxiliary families of orthogonal polynomials\label{pom}}

\subsection{Hermite polynomials}

The Hermite polynomials are defined by the following three-term recurrence
(\ref{_1}), below:
\begin{equation}
xH_{n}\left(  x\right)  =H_{n+1}\left(  x\right)  +nH_{n-1}, \label{_1}%
\end{equation}
with $H_{0}\left(  x\right)  \allowbreak=\allowbreak H_{1}\left(  x\right)
\allowbreak=\allowbreak1$. They slightly differ from the Hermite polynomials
$h_{n}$ considered in most of the books on special functions. Namely,
\[
2xh_{n}\left(  x\right)  =h_{n+1}(x)+2nh_{n-1}(x),
\]
with $h_{-1}\left(  x\right)  =0,$ $h_{0}\left(  x\right)  \allowbreak
=\allowbreak1$.

It is known that polynomials $\left\{  h_{n}\right\}  $ are orthogonal with
respect to $\exp\left(  -x^{2}\right)  $ while the polynomials $\left\{
H_{n}\right\}  $ with respect to $\exp\left(  -x^{2}/2\right)  $. Moreover
$H_{n}\left(  x\right)  =h_{n}\left(  x/\sqrt{2}\right)  /\left(  \sqrt
{2}\right)  ^{n}$. Besides, we have (see \cite{Andrews1999} 6.1.7)
\begin{align}
\exp\left(  xt-t^{2}/2\right)  \allowbreak &  =\allowbreak\sum_{k\geq0}%
\frac{t^{k}}{k!}H_{k}\left(  x\right)  ,\label{_gH}\\
\exp\left(  2xt-t^{2}\right)  \allowbreak &  =\allowbreak\sum_{k\geq0}%
\frac{t^{k}}{k!}h_{k}\left(  x\right)  . \label{_gh}%
\end{align}

\subsection{Chebyshev polynomials}

They are of two kinds. The Chebyshev polynomials of the first kind $\left\{
T_{n}\right\}  _{n\geq-1}$ as well as Chebyshev polynomials of the second kind
$\left\{  U_{n}\right\}  _{n\geq-1}$ are defined by the same following
three-term recursion%
\begin{equation}
2xT_{n}\left(  x\right)  \allowbreak=\allowbreak T_{n+1}\left(  x\right)
+T_{n-1}\left(  x\right)  , \label{czeb}%
\end{equation}
for $n\geq1,$ with different, however, initial conditions$:$ namely with
$T_{-1}(x)=x,$ $U_{-1}(x)=0,$ $T_{0}\left(  x\right)  \allowbreak=\allowbreak
U_{0}(x)\allowbreak=\allowbreak1$.

One can define also these two families of polynomials in the following way for
$n\geq0$:%
\begin{align}
T_{n}\left(  \cos\theta\right)   &  =\cos\left(  n\theta\right)
,\label{repCh}\\
U_{n}\left(  \cos\theta\right)  \allowbreak &  =\allowbreak\frac{\sin\left(
n+1\right)  \theta}{\sin\theta}. \label{repCh2}%
\end{align}

We have (see e.g., \cite{IA} or \cite{Andrews1999})
\begin{align*}
\int_{-1}^{1}T_{n}\left(  x\right)  T_{m}\left(  x\right)  \frac{dx}{\pi
\sqrt{1-x^{2}}}  &  =\left\{
\begin{array}
[c]{ccc}%
1 & \text{if} & m=n=0\\
1/2 & \text{if} & m=n\neq0\\
0 & \text{if} & m\neq n
\end{array}
\right.  ,\\
\int_{-1}^{1}U_{n}\left(  x\right)  U_{m}\left(  x\right)  \frac
{2\sqrt{1-x^{2}}}{\pi}dx  &  =\left\{
\begin{array}
[c]{ccc}%
1 & \text{if} & m=n\\
0 & \text{if} & m\neq n
\end{array}
\right.  ,
\end{align*}
and for $\left\vert t\right\vert \leq1$
\begin{align}
\sum_{k=0}^{\infty}t^{k}T_{k}\left(  x\right)   &  =\frac{1-tx}{v(x|t)}%
,\label{_gT}\\
\sum_{k=0}^{\infty}t^{k}U_{k}\left(  x\right)   &  =\frac{1}{v(x|t)},
\label{_gU}%
\end{align}
where $v(x|t)$ is given by (\ref{vxa}). It has to be remarked that the
Chebyshev polynomials of the second kind, i.e.,polynomials $\left\{
U_{n}\right\}  _{n\geq-1}$ are related to polynomials $\left\{  h_{n}%
(x|q)\right\}  _{n\geq-1}$ defined below and play an extremely important
r\^{o}le in the so-called free probability, an intensively developing theory
that was basically first presented in papers of Voiculescu \cite{Voi} and
\cite{Voi2}.

\subsection{Rogers-Szeg\"{o} polynomials}

These polynomials are defined by the equality:
\[
s_{n}\left(  x|q\right)  \allowbreak=\allowbreak\sum_{k=0}^{n}%
%TCIMACRO{\QATOPD{[}{]}{n}{k}}%
%BeginExpansion
\genfrac{[}{]}{0pt}{}{n}{k}%
%EndExpansion
_{q}x^{k},
\]
for $n\geq0$ and $s_{-1}\left(  x|q\right)  \allowbreak=\allowbreak0$. They
will be playing here an important, auxiliary r\^{o}le. One can easily notice
(by changing the order of summation and using (\ref{binT})) that for all
$\left\vert t\right\vert <1,$ $q\in\lbrack-1,1)$ we have:%
\begin{equation}
\sum_{k\geq0}\frac{t^{k}}{(q)_{k}}s_{k}(x|q)=\frac{1}{(tx)_{\infty}%
(t)_{\infty}}. \label{gen_R-S}%
\end{equation}

In the sequel, the following identities discovered by Carlitz (see Exercise
12.2(b) and 12.2(c) of \cite{IA}), are true for $\left\vert q\right\vert
,\left\vert t\right\vert <1$ of which the first one is trivial in the face of
(\ref{gen_R-S}):
\begin{equation}
\sum_{k=0}^{\infty}\frac{s_{k}\left(  1|q\right)  t^{k}}{\left(  q\right)
_{k}}\allowbreak=\allowbreak\frac{1}{\left(  t\right)  _{\infty}^{2}}%
,~~\sum_{k=0}^{\infty}\frac{s_{k}^{2}\left(  1|q\right)  t^{k}}{\left(
q\right)  _{k}}\allowbreak=\allowbreak\frac{\left(  t^{2}\right)  _{\infty}%
}{\left(  t\right)  _{\infty}^{4}}, \label{Car_id}%
\end{equation}
which will allow us to show the convergence of many series considered in the
sequel. Basically, these polynomials appeared for the first time in the three
old papers of Rogers \cite{Rogers1}, \cite{Rogers2} and \cite{Rogers3}. One
has to add that many useful and complicated formulae involving
Rogers-Szeg\"{o} polynomials were presented in works of Carlitz
\cite{Carlitz56}, \cite{Carlitz57}, \cite{Carlitz72}. These polynomials were
generalized further, like, for example, \cite{Chen08}.

\part{Askey-Wilson scheme supported on $[-1,1]\label{sup[-1,1]}$}

\section{$0-$Parameter families}

\subsection{$q$-Hermite polynomials}

\subsubsection{3 term recurrence}

The $q-$Hermite polynomials are defined by the following three-term
recurrence:
\begin{equation}
2xh_{n}(x|q)=h_{n+1}(x|q)+(1-q^{n})h_{n-1}(x|q), \label{q-cont}%
\end{equation}
for $n\geq1$ with $h_{-1}(x|q)=0,$ $h_{0}(x|q)=1$.

In particular, one shows (see e.g., \cite{IA}(13.1.7)) that:
\begin{equation}
h_{n}\left(  x|q\right)  \allowbreak=\allowbreak e^{in\theta}s_{n}\left(
e^{-2i\theta}|q\right)  , \label{cH}%
\end{equation}
where $x\allowbreak=\allowbreak\cos\theta,$ and that:
\[
\sup_{\left\vert x\right\vert \leq1}\left\vert h_{n}\left(  x|q\right)
\right\vert \leq s_{n}\left(  1|q\right)  .
\]

The polynomials $h_{n}$ are often called the continuous $q-$Hermite
polynomials. Since the terminology is not fixed, we will use the name
$q-$Hermite polynomials for brevity.

\subsubsection{Orthogonality relation}

We have%

\begin{equation}
\int_{-1}^{1}h_{n}\left(  x|q\right)  h_{m}\left(  x|q\right)  f_{h}\left(
x|q\right)  dx=\left\{
\begin{array}
[c]{ccc}%
\left(  q\right)  _{n} & \text{if} & m=n\\
0 & \text{if} & m\neq n
\end{array}
\right.  , \label{inth^2}%
\end{equation}
where we denoted
\begin{equation}
f_{h}\left(  x|q\right)  =\frac{2\left(  q\right)  _{\infty}\sqrt{1-x^{2}}%
}{\pi}\prod_{k=1}^{\infty}l\left(  x|q^{k}\right)  , \label{fh}%
\end{equation}
and where $l\left(  x|a\right)  \allowbreak$ is given by (\ref{lsa}). It is
worth to mention, that $f_{h}$ can be expanded in an infinite series. Namely,
we have%
\begin{equation}
f_{h}(x|q)\allowbreak=\allowbreak\frac{2\sqrt{1-x^{2}}}{\pi}\sum_{k=0}%
^{\infty}(-1)^{k}q^{\binom{k+1}{2}}U_{2k}(x), \label{exp-fh}%
\end{equation}
that was proved in \cite{Szab-qGauss}, basically for the linearly transformed
$x,$ but that can be easily brought in the above mentioned form.

\subsubsection{Generating function}

Following (\ref{cH}) and (\ref{gen_R-S}), one can easily derive the following
formula for the generating function of $q-$Hermite polynomials (see also
\cite{KLS}(14.26.11))%
\begin{equation}
\sum_{j=0}^{\infty}\frac{t^{j}}{\left(  q\right)  _{j}}h_{j}\left(
x|q\right)  =\frac{1}{\prod_{k=0}^{\infty}v\left(  x|tq^{k}\right)  },
\label{_gqh}%
\end{equation}
where $v(x|t)\allowbreak=\allowbreak1-2tx+t^{2}$ and the series is convergent
for $\left\vert x|\leq1,|t\right\vert <1$. In the sequel, we will use denote
the right hand side of (\ref{_gqh}) by $\varphi_{h}(x|t,q)$. That is, we will
denote%
\[
\varphi_{h}(x|t,q)=\frac{1}{\prod_{k=0}^{\infty}v\left(  x|tq^{k}\right)  }.
\]
In fact, one can give two more generating functions for these polynomials.
They are given by the formulae (3.26.12) and (3.26.13) in \cite{KLS-arX} ) and
are less legible. In order to finish the topic of generating functions of
$q-$Hermite polynomials let us mention the following two.

Using DEI((\ref{Cnah}),(\ref{inth^2})), we get for all $\left\vert
q\right\vert ,\left\vert \beta\right\vert ,\left\vert x\right\vert <1,$%
\begin{equation}
\sum_{k\geq0}\frac{\beta^{k}}{(q)_{k}(\beta)_{k+1}}h_{2k}(x|q)=\frac
{(\beta^{2})_{\infty}}{(\beta)_{\infty}^{2}\prod_{j=0}^{\infty}l(x|\beta
q^{j})}. \label{genh2}%
\end{equation}
Similarly, basing on (\ref{hhQ}) and the ideas of \cite{Szab4}, we get for
$\max(\left\vert x\right\vert ,\left\vert a\right\vert ,\left\vert
b\right\vert ,\left\vert q\right\vert )<1$:%
\[
(ab)_{\infty}\varphi_{h}(x|a,q)\varphi_{h}(x|b,q)\allowbreak=\allowbreak
\sum_{j\geq0}\frac{(ab)^{j/2}}{(q)_{j}}h_{j}\left(  \frac{(a+b)}{2(ab)^{1/2}%
}|q\right)  h_{j}(x|q),
\]
which is nothing else but the famous Poisson-Mehler expansion formula.

\subsubsection{Connection coefficients, linearization formulae and other
finite expansion formulae}

One has to mention one other important formula, so-called "change of base
formula" relating polynomials $h_{n}(x|p)$ and $h_{n}(x|q)$. Namely, we have%
\begin{align}
h_{n}(x|p)  &  =\sum_{k=0}^{\left\lfloor n/2\right\rfloor }h_{n-2k}%
(x|q)\sum_{j=0}^{k}\left(  -1\right)  ^{j}p^{k-j}q^{j\left(  j+1\right)  /2}%
%TCIMACRO{\QATOPD{[}{]}{n-2k+j}{j}}%
%BeginExpansion
\genfrac{[}{]}{0pt}{}{n-2k+j}{j}%
%EndExpansion
_{q}\allowbreak\label{chbase}\\
&  \times\left(
%TCIMACRO{\QATOPD{[}{]}{n}{k-j}}%
%BeginExpansion
\genfrac{[}{]}{0pt}{}{n}{k-j}%
%EndExpansion
_{p}-p^{n-2k+2j+1}%
%TCIMACRO{\QATOPD{[}{]}{n}{k-j-1}}%
%BeginExpansion
\genfrac{[}{]}{0pt}{}{n}{k-j-1}%
%EndExpansion
_{p}\right)  ,\nonumber
\end{align}
that was proved say in \cite{IS03}, \cite{bressoud} or \cite{GIS99} (formula 7.2).

Remembering that $h_{n}(x|0)\allowbreak=\allowbreak U_{n}(x)$ and then setting
$p\allowbreak=\allowbreak0$ and then, later, setting $p\allowbreak=\allowbreak
q$ and $q\allowbreak=\allowbreak0$ in \ref{chbase}, we arrive at the following
two formulae:%

\begin{gather}
U_{n}\left(  x\right)  =\sum_{j=0}^{\left\lfloor n/2\right\rfloor }\left(
-1\right)  ^{j}q^{j\left(  j+1\right)  /2}%
%TCIMACRO{\QATOPD{[}{]}{n-j}{j}}%
%BeginExpansion
\genfrac{[}{]}{0pt}{}{n-j}{j}%
%EndExpansion
_{q}h_{n-2j}\left(  x|q\right)  ,\label{Unah}\\
h_{n}\left(  y|q\right)  =\sum_{k=0}^{\left\lfloor n/2\right\rfloor }%
\frac{q^{k}-q^{n-k+1}}{1-q^{n-k+1}}%
%TCIMACRO{\QATOPD{[}{]}{n}{k}}%
%BeginExpansion
\genfrac{[}{]}{0pt}{}{n}{k}%
%EndExpansion
_{q}U_{n-2k}\left(  y\right)  . \label{hnaU}%
\end{gather}

We have also the following, very useful, the so-called 'linearization
formula', that is true for $n,m\geq0$ and all $x$ and $q$.%
\begin{equation}
h_{n}\left(  x|q\right)  h_{m}\left(  x|q\right)  =\sum_{j=0}^{\min\left(
n,m\right)  }%
%TCIMACRO{\QATOPD{[}{]}{m}{j}}%
%BeginExpansion
\genfrac{[}{]}{0pt}{}{m}{j}%
%EndExpansion
_{q}%
%TCIMACRO{\QATOPD{[}{]}{n}{j}}%
%BeginExpansion
\genfrac{[}{]}{0pt}{}{n}{j}%
%EndExpansion
_{q}(q)_{j}h_{n+m-2j}\left(  x|q\right)  . \label{lihh}%
\end{equation}
This formula can be extended, but is not as legible as the one above:%

\begin{gather}
h_{n}\left(  x|q\right)  h_{m}\left(  x|q\right)  h_{k}\left(  x|q\right)
=\label{linhhh}\\
\sum_{j=0}^{\left\lfloor (k+m+n)/2\right\rfloor }\left(  \sum_{r=\max
(j-k,0)}^{\min(m,n,m+n-j)}%
%TCIMACRO{\QATOPD{[}{]}{m}{r}}%
%BeginExpansion
\genfrac{[}{]}{0pt}{}{m}{r}%
%EndExpansion
_{q}%
%TCIMACRO{\QATOPD{[}{]}{n}{r}}%
%BeginExpansion
\genfrac{[}{]}{0pt}{}{n}{r}%
%EndExpansion
_{q}%
%TCIMACRO{\QATOPD{[}{]}{k}{j-r}}%
%BeginExpansion
\genfrac{[}{]}{0pt}{}{k}{j-r}%
%EndExpansion
_{q}%
%TCIMACRO{\QATOPD{[}{]}{m+n-2r}{j-r}}%
%BeginExpansion
\genfrac{[}{]}{0pt}{}{m+n-2r}{j-r}%
%EndExpansion
_{q}(q)_{r}(q)_{j-r}\right) \nonumber\\
\times h_{n+m+k-2j}\left(  x|q\right)  .\nonumber
\end{gather}

It is interesting, that one can easily get the generating function of the
numbers%
\[
A_{j,k,n,m}\allowbreak=\allowbreak\int_{-1}^{1}h_{j}(x)h_{k}(x)h_{n}%
(x)h_{m}(x)f_{h}(x)dx.
\]
Namely, following \cite{Szab4}(Remark 2.2) we have%
\[
\sum_{j,k,n,m=0}^{\infty}\frac{a^{j}b^{k}c^{n}d^{m}}{\left(  q\right)
_{j}\left(  q\right)  _{k}\left(  q\right)  _{n}\left(  q\right)  _{m}%
}A_{i,j,k,l}=\frac{\left(  abcd\right)  _{\infty}}{\left(
ab,ac,ad,bc,bd,cd\right)  _{\infty}}.
\]
Hence, setting $d\allowbreak=\allowbreak0$ in the above-mentioned formula, we
get%
\[
\sum_{j,k,n=0}^{\infty}\frac{a^{j}b^{k}c^{n}}{\left(  q\right)  _{j}\left(
q\right)  _{k}\left(  q\right)  _{n}}\int_{-1}^{1}h_{j}(x)h_{k}(x)h_{n}%
(x)f_{h}(x)dx=\frac{1}{\left(  ab,ac,bc\right)  _{\infty}}.
\]
It is worth to mention that the first important results on $q-$Hermite
appeared in \cite{ISV87}, \cite{IsMas94}, \cite{IRS99} and \cite{IsStan200}.

\subsection{$q^{-1}-$Hermite polynomials}

This is an auxiliary, however important family of polynomials. It is related
to the family of $q$-Hermite polynomials by the formula:
\begin{subequations}
\begin{equation}
b_{n}\left(  x|q\right)  \allowbreak=\allowbreak\left\{
\begin{array}
[c]{ccc}%
\left(  -1\right)  ^{n}q^{\binom{n}{2}}h_{n}\left(  x|q^{-1}\right)  . &
\text{if} & q\neq0\\
\left(  -1\right)  ^{n}\lim_{q->0}q^{\binom{n}{2}}h_{n}\left(  x|q^{-1}\right)
& \text{if} & q=0
\end{array}
\right.  . \label{bbb}%
\end{equation}

\subsubsection{3 term recurrence}

It is easy to note that these polynomials satisfy the following three-term
recurrence:
\end{subequations}
\begin{equation}
b_{n+1}\left(  x|q\right)  =-2q^{n}xb_{n}\left(  y|q\right)  +q^{n-1}%
(1-q^{n})b_{n-1}\left(  x|q\right)  , \label{maleB}%
\end{equation}
with $b_{-1}\left(  x|q\right)  \allowbreak=\allowbreak0,$ $b_{0}\left(
x|q\right)  \allowbreak=\allowbreak1$. From Favard's theorem it follows that
polynomials defined by (\ref{maleB}) cannot be orthogonal with respect to a
nonnegative measure for $\left\vert q\right\vert <1$.

Notice that $b_{-1}(x|0)\allowbreak=\allowbreak0$, $b_{0}(x|0)\allowbreak
=\allowbreak b_{2}(x|0)\allowbreak=\allowbreak1$, $b_{1}(x|0)\allowbreak
=\allowbreak-2x$, $b_{n}(x|0)\allowbreak=\allowbreak0$ for $n\geq3$.

Polynomials $\left\{  b_{n}\right\}  $ will help us to define new families of
orthogonal polynomials.

\subsubsection{Generating function}

To see the nice r\^{o}le of these polynomials let us recall that recently (see
\cite{SzabCheb} Lemma 2(a)) it was shown that for $\left\vert t\right\vert
,\left\vert q\right\vert <1:$%
\[
1/\varphi_{h}(x|t,q)=\prod_{k=0}^{\infty}v\left(  x|tq^{k}\right)
=\sum_{n\geq0}\frac{t^{n}}{(q)_{n}}b_{n}(x|q).
\]

\subsection{Connection coefficients, linearization formulae and finite
expansion formulae involving $q$-Hermite and $q^{-1}$-Hermite polynomials}

To appreciate the importance of polynomials $\left\{  b_{n}\right\}  $, let us
recall several formulae that expose the interrelation between them and the
$q$-Hermite polynomials. They are related by the following formula:
\begin{equation}
b_{n}\left(  x|q\right)  \allowbreak=\allowbreak\left(  -1\right)
^{n}q^{\binom{n}{2}}\sum_{k=0}^{\left\lfloor n/2\right\rfloor }%
%TCIMACRO{\QATOPD{[}{]}{n}{k}}%
%BeginExpansion
\genfrac{[}{]}{0pt}{}{n}{k}%
%EndExpansion
_{q}%
%TCIMACRO{\QATOPD{[}{]}{n-k}{k}}%
%BeginExpansion
\genfrac{[}{]}{0pt}{}{n-k}{k}%
%EndExpansion
_{q}(q)_{k}q^{k(k-n)}h_{n-2k}\left(  x|q\right)  , \label{bnah}%
\end{equation}
that was proved in \cite{Szab6} Lemma 2 assertion i. Further, for all
$n,m\geq0$ we have
\begin{equation}
\sum_{k=0}^{n}%
%TCIMACRO{\QATOPD{[}{]}{n}{k}}%
%BeginExpansion
\genfrac{[}{]}{0pt}{}{n}{k}%
%EndExpansion
_{q}b_{n-k}\left(  x|q\right)  h_{k+m}\left(  x|q\right)  \allowbreak=\left\{
\begin{array}
[c]{ccc}%
0 & \text{if} & n>m\\
(-1)^{n}q^{\binom{n}{2}}\frac{(q)_{m}}{(q)_{m-n}}h_{m-n}\left(  x|q\right)  &
\text{if} & n\leq m
\end{array}
\right.  , \label{simpbh}%
\end{equation}
that was proved also in \cite{Szab6} Lemma 2 assertion iii.

We also have for all $\forall n,m\geq0$ (in \cite{Szab6} Lemma 2 assertion
ii)
\begin{equation}
h_{m}\left(  x|q\right)  b_{n}\left(  x|q\right)  =(-1)^{n}q^{\binom{n}{2}%
}\sum_{k=0}^{\left\lfloor (n+m)/2\right\rfloor }%
%TCIMACRO{\QATOPD{[}{]}{n}{k}}%
%BeginExpansion
\genfrac{[}{]}{0pt}{}{n}{k}%
%EndExpansion
_{q}%
%TCIMACRO{\QATOPD{[}{]}{n+m-k}{k}}%
%BeginExpansion
\genfrac{[}{]}{0pt}{}{n+m-k}{k}%
%EndExpansion
_{q}(q)_{k}q^{-k(n-k)}h_{n+m-2k}\left(  x|q\right)  . \label{linbh}%
\end{equation}

\section{$1$-Parameter families}

\subsection{Big continuous $q-$Hermite polynomials}

\subsubsection{3 term recurrence}

This family of polynomials satisfies the following three-term recurrence:%
\begin{equation}
(2x-aq^{n})h_{n}\left(  x|a,q\right)  =h_{n+1}\left(  x|a,q\right)
+(1-q^{n})h_{n-1}\left(  x|a,q\right)  , \label{big}%
\end{equation}
with $h_{-1}(x|a,q)\allowbreak=\allowbreak0$ and $h_{0}(x|a,q)\allowbreak
=\allowbreak1$.

\subsubsection{Orthogonality relation}

As far as orthogonality relations are concerned, one has to distinguish two
cases $\left\vert a\right\vert <1$ and and $a>1$. In the first situation,
there exists an absolutely continuous measure with the density equal to%
\begin{equation}
f_{bh}(x|a,q)=f_{h}(x|q)\varphi_{h}(x|a,q), \label{ffbh}%
\end{equation}
where $f_{h}$ is given by (\ref{fh}). The fact that then $f_{bh}$ integrates
to $1$ is obvious since we have expansion (\ref{_gqh}). In case of $a>1$ we
have to assume that $q>0$ and then the orthogonalizing measure has the density
given by (\ref{ffbh}) but it has also $\#\left\{  k:1<aq^{k}<a\right\}  $
atoms at the points
\begin{equation}
x_{k}\allowbreak=\allowbreak(aq^{k}+a^{-1}q^{-k})/2, \label{atoms}%
\end{equation}
with the weights
\[
\hat{w}_{k}\allowbreak=\allowbreak\frac{(1-a^{2}q^{2k})(a^{-2})_{\infty}%
(a^{2})_{k}}{(1-a^{2})(q)_{k}}q^{-(3k^{2}+k)/2}(\frac{-1}{a^{4}})^{k}.
\]

For $\left\vert a\right\vert <1$ we have%

\begin{equation}
\int_{-1}^{1}h_{n}\left(  x|a,q\right)  h_{m}\left(  x|a,q\right)
f_{bh}\left(  x|q\right)  dx=\left\{
\begin{array}
[c]{ccc}%
\left(  q\right)  _{n} & \text{if} & m=n\\
0 & \text{if} & m\neq n
\end{array}
\right.  . \label{ortbh}%
\end{equation}

\begin{remark}
Notice that for $\left\vert a\right\vert <1,$ $f_{bh}$ can be expanded in the
following way:%
\[
f_{bh}(x|a,q)=f_{h}(x|q)\sum_{k\geq0}\frac{a^{k}}{(q)_{k}}h_{k}(x|q).
\]
This will be later generalized in two ways.
\end{remark}

\subsubsection{Connection coefficients, linearization formulae and finite
expansion formulae}

One can easily show (by calculating the generating function and comparing it
with (\ref{gfha}), below, and then applying (\ref{rozklv})) that%
\[
h_{n}\left(  x|a,q\right)  =\sum_{k=0}^{n}%
%TCIMACRO{\QATOPD{[}{]}{n}{k}}%
%BeginExpansion
\genfrac{[}{]}{0pt}{}{n}{k}%
%EndExpansion
_{q}\left(  ae^{i\theta}\right)  _{k}e^{i\left(  n-2k\right)  \theta},
\]
where, as usual, $x\allowbreak=\allowbreak\cos\theta$. From this equality
follow immediately the following two formulae:%
\begin{align}
h_{n}\left(  x|a,q\right)  \allowbreak &  =\allowbreak\sum_{k=0}^{n}%
%TCIMACRO{\QATOPD{[}{]}{n}{k}}%
%BeginExpansion
\genfrac{[}{]}{0pt}{}{n}{k}%
%EndExpansion
_{q}(-1)^{k}q^{\binom{k}{2}}a^{k}h_{n-k}\left(  x|q\right)  ,\label{bigh}\\
h_{n}(x|q)  &  =\sum_{k=0}^{n}%
%TCIMACRO{\QATOPD{[}{]}{n}{k}}%
%BeginExpansion
\genfrac{[}{]}{0pt}{}{n}{k}%
%EndExpansion
_{q}a^{k}h_{n-k}(x|a,q). \label{hnabh}%
\end{align}

For more connection coefficients see Section \ref{rem}, below.

\subsubsection{Generating function}

We have (see e.g., \cite{KLS}(14.18.13)) that:%
\begin{equation}
\sum_{j=0}^{\infty}\frac{t^{j}}{\left(  q\right)  _{j}}h_{j}\left(
x|a,q\right)  =\frac{\left(  at\right)  _{\infty}}{\prod_{k=0}^{\infty
}v\left(  x|tq^{k}\right)  }=(at)_{\infty}\varphi_{h}(x|t,q), \label{gfha}%
\end{equation}
for $\left\vert at\right\vert <1$ and $\varphi_{h}$ denotes as before the
right-hand side of (\ref{_gqh}). Again, one can give two more formulae for
slightly differently defined generating functions of these polynomials, namely
(3.18.14) and (3.18.15) in \cite{KLS-arX}.

Again using DEI((\ref{hnabh}),(\ref{ortbh})) we end up with the following
expansion:%
\begin{equation}
1/\varphi_{h}(x|a,q)=\prod_{j=0}^{\infty}v(x|aq^{j})\allowbreak=\allowbreak
\sum_{k=0}^{\infty}q^{\binom{k}{2}}\frac{(-a)^{k}}{(q)_{k}}h_{n}(x|a,q).
\label{genbh2}%
\end{equation}

\begin{remark}
Notice that for $q\allowbreak=\allowbreak0$ (\ref{big}) is identical with
(\ref{czeb}) for $n\geq1$. Besides, we have $h_{n}(x|a,0)\allowbreak
=\allowbreak2x-a\allowbreak=\allowbreak U_{1}(x)\allowbreak-\allowbreak
aU_{0}(x),$ hence we deduce that for $n\geq-1$ we have%
\[
h_{n}(x|a,0)=U_{n}(x)-aU_{n-1}(x).
\]

\end{remark}

\subsection{Big continuous $q^{-1}-$Hermite polynomials}

Like in the case of $q-$Hermite polynomials, we define an auxiliary family of
polynomials $\left\{  \hat{h}_{n}\right\}  _{n\geq-1}$ that we will call big
continuous $q^{-1}-$Hermite polynomials. They are defined by the relationship%
\[
\hat{h}_{n}(x|a,q)=(-1)^{n}q^{\binom{n}{2}}h_{n}(x|a,q^{-1}),
\]
for all $n\geq-1$.

\subsubsection{Three term recurrence}

It is elementary to check that they satisfy the following three-term
recurrence:%
\[
\hat{h}_{n+1}(x|a,q)=(2xq^{n}-a)\hat{h}_{n}(x|a,q)-q^{n-1}(1-q^{n})\hat
{h}_{n-1}(x|a,q),
\]
with $\hat{h}_{-1}(x|a,q)\allowbreak=\allowbreak0$ and $\hat{h}_{0}%
(x|a,q)\allowbreak=\allowbreak1$. They are obviously not orthogonal as it
follows from Favard's theorem. However, they play an important, auxiliary
r\^{o}le, similar to the r\^{o}le played by the polynomials $\left\{
b_{n}\right\}  $. Namely, for $n\geq0$ we have
\begin{gather}
\hat{h}_{n}(x|a,q)=\sum_{k=0}^{n}%
%TCIMACRO{\QATOPD{[}{]}{n}{k}}%
%BeginExpansion
\genfrac{[}{]}{0pt}{}{n}{k}%
%EndExpansion
_{q}a^{k}b_{n-k}(x|q),\label{Id1}\\
\sum_{j=0}^{n}%
%TCIMACRO{\QATOPD{[}{]}{n}{j}}%
%BeginExpansion
\genfrac{[}{]}{0pt}{}{n}{j}%
%EndExpansion
_{q}\hat{h}_{j}(x|a,q)h_{n-j}(x|a,q)=0. \label{id3}%
\end{gather}

(\ref{Id1}) follows directly from the definition of polynomials $b_{n}$ and
(\ref{bigh}). (\ref{id3}) follows from (\ref{Id1}), (\ref{hnabh}), change the
order of summation and finally from (\ref{simpbh}).

\begin{remark}
It is easy to note that $\hat{h}_{1}(x|a,0)\allowbreak=\allowbreak a-2x,$
$\hat{h}_{n}(x|a,0)\allowbreak=\allowbreak a^{n-2}v(x|a),$ for $n\geq2$.
\end{remark}

\subsubsection{Generating function}

We have for $\left\vert at\right\vert <1:$%

\[
1/\sum_{j=0}^{\infty}\frac{t^{j}}{\left(  q\right)  _{j}}h_{j}\left(
x|a,q\right)  =\frac{1}{(at)_{\infty}}\prod_{j=0}^{\infty}v(x|tq^{j}%
)=\sum_{k\geq0}\frac{t^{k}}{(q)_{k}}\hat{h}_{k}(x|a,q).
\]
Firstly we apply (\ref{id3}) and then change of the order of summation.

\subsection{Continuous $q-$utraspherical polynomials}

\subsubsection{3 term recurrence}

It turns out that the polynomials $\left\{  h_{n}\right\}  _{n\geq-1}$ are
also related to another family of orthogonal polynomials $\left\{
C_{n}\left(  x|\beta,q\right)  \right\}  _{n\geq-1},$ which was considered by
Rogers in 1894 (see \cite{Rogers2}, \cite{Rogers3}). Now, they are called the
continuous $q-$utraspherical polynomials. Polynomials $\left\{  C_{n}\right\}
$ can be defined through their $3-$recurrence (see \cite{KLS}(14.10.19)) in
the following way:%

\begin{equation}
2(1-\beta q^{n})xC_{n}(x|\beta,q)=(1-q^{n+1})C_{n+1}\left(  x|\beta,q\right)
\allowbreak+(1-\beta^{2}q^{n-1})C_{n-1}\left(  x|\beta,q\right)  , \label{3tC}%
\end{equation}
for $n\geq0,$ with $C_{-1}\left(  x|\beta,q\right)  \allowbreak=\allowbreak0,$
$C_{0}\left(  x|\beta,q\right)  \allowbreak=\allowbreak1,$ where $\beta$ is a
real parameter such that $\left\vert \beta\right\vert <1$. One shows (see
e.g., \cite{IA}(13.2.1)) that for $\left\vert q\right\vert ,\left\vert
\beta\right\vert <1,$ $\forall n\in\mathbb{N}:$
\begin{equation}
C_{n}\left(  x|\beta,q\right)  \allowbreak=\allowbreak\sum_{k=0}^{n}%
\frac{\left(  \beta\right)  _{k}\left(  \beta\right)  _{n-k}}{\left(
q\right)  _{k}\left(  q\right)  _{n-k}}e^{i\left(  n-2k\right)  \theta},
\label{cS}%
\end{equation}
where $x=\cos\theta$.

\subsubsection{Orthogonality relation}

We have (see e.g., \cite{IA}(13.2.4))
\begin{equation}
\int_{-1}^{1}C_{n}\left(  x|\beta,q\right)  C_{m}\left(  x|\beta,q\right)
f_{C}\left(  x|\beta,q\right)  dx=\left\{
\begin{array}
[c]{ccc}%
0 & \text{if} & m\neq n\\
\frac{\left(  \beta^{2}\right)  _{n}}{\left(  1-\beta q^{n}\right)  \left(
q\right)  _{n}} & \text{if} & m=n
\end{array}
\right.  , \label{Intb^2}%
\end{equation}
where
\begin{equation}
f_{C}(x|\beta,q)\allowbreak=\allowbreak\frac{(\beta^{2})_{\infty}}%
{(1-\beta)(\beta,\beta q)_{\infty}}f_{h}\left(  x|q\right)  /\prod
_{j=0}^{\infty}l\left(  x|\beta q^{j}\right)  , \label{fC}%
\end{equation}
with, as before, $l(x|a)\allowbreak=\allowbreak(1+a)^{2}-4x^{2}a$.

\subsubsection{Connection coefficients, linearization formulae and finite
expansion formulae}

We have the celebrated connection coefficient formula for the Rogers
polynomials see \cite{IA},(13.3.1).%

\begin{equation}
C_{n}\left(  x|\gamma,q\right)  \allowbreak=\allowbreak\sum_{k=0}%
^{\left\lfloor n/2\right\rfloor }\frac{\beta^{k}\left(  \gamma/\beta\right)
_{k}\left(  \gamma\right)  _{n-k}\left(  1-\beta q^{n-2k}\right)  }%
{(q)_{k}\left(  \beta q\right)  _{n-k}\left(  1-\beta\right)  }C_{n-2k}\left(
x|\beta,q\right)  . \label{CnaC}%
\end{equation}

As special cases (once we take $\gamma\allowbreak=\allowbreak\beta,$
$\beta\allowbreak=\allowbreak0$ and (\ref{Ch}) to get (\ref{Cnah}) and then
$\gamma\allowbreak=\allowbreak0$ and (\ref{Ch}) to get (\ref{hnaC}) ) we get
the following useful formulae true for $\left\vert \beta\right\vert
,\left\vert \gamma\right\vert <1$:
\begin{align}
C_{n}\left(  x|\beta,q\right)  \allowbreak &  =\allowbreak\sum_{k=0}%
^{\left\lfloor n/2\right\rfloor }(-\beta)^{k}\frac{q^{\binom{k}{2}}\left(
\beta\right)  _{n-k}}{(q)_{k}(q)_{n-2k}}h_{n-2k}\left(  x|q\right)
,\label{Cnah}\\
h_{n}\left(  x|q\right)  \allowbreak &  =\allowbreak\sum_{k=0}^{\left\lfloor
n/2\right\rfloor }\beta^{k}\frac{(q)_{n}\left(  1-\beta q^{n-2k}\right)
}{(q)_{k}(1-\beta)\left(  \beta q\right)  _{n-k}}C_{n-2k}\left(
x|\beta,q\right)  . \label{hnaC}%
\end{align}

\subsubsection{Generating function}

It is also known (see \cite{KLS-arX} (3.10.25)), that
\begin{equation}
\sum_{k=0}^{\infty}t^{k}C_{k}\left(  x|\beta,q\right)  =\prod_{k=0}^{\infty
}\frac{v\left(  x|\beta tq^{k}\right)  }{v\left(  x|tq^{k}\right)  }.
\label{fgC}%
\end{equation}
Again one can give several other (less legible) generating functions involving
these polynomials given by (3.10.26)-(3.10.31) in \cite{KLS-arX}.

\begin{remark}
We have (following formulae (\ref{cS}) and (\ref{cH})), that
\begin{equation}
C_{n}\left(  x|0,q\right)  \allowbreak=\allowbreak\frac{h_{n}\left(
x|q\right)  }{\left(  q\right)  _{n}}. \label{Ch}%
\end{equation}

\end{remark}

\begin{remark}
Notice that taking $\beta\allowbreak=\allowbreak q$ we get $C_{n}%
(x|q,q)\allowbreak=\allowbreak U_{n}(x)$. To see this put $\beta
\allowbreak=\allowbreak q$ in (\ref{3tC}) and cancel out by $q^{n+1}$.
\end{remark}

\begin{remark}
For $q\allowbreak=\allowbreak0$ we have (by (\ref{fgC})%
\[
\sum_{k=0}^{\infty}t^{k}C_{k}\left(  x|\beta,0\right)  =\frac{1-2\beta
tx+\beta^{2}t^{2}}{1-2tx+t^{2}}\text{,}%
\]
from which we deduce that for $n\geq1$ we have
\[
C_{n}(x|\beta,0)\allowbreak=\allowbreak(1-\beta)U_{n}(x)-\beta(1-\beta
)U_{n-2}(x).
\]

\end{remark}

\section{$2-$Parameter family}

\subsection{Al-Salam--Chihara polynomials}

\subsubsection{3 term recurrence}

In the literature connected with the special functions, the following
polynomials defined recursively function as the Al-Salam--Chihara (ASC)
polynomials:%
\begin{equation}
(2x-(a+b)q^{n})Q_{n}\left(  x|a,b,q\right)  =Q_{n+1}\left(  x|a,b,q\right)
\allowbreak+(1-abq^{n-1})(1-q^{n})Q_{n-1}(x|a,b,q), \label{AlSC1}%
\end{equation}
with $Q_{-1}\left(  x|a,b,q\right)  \allowbreak=\allowbreak0,$ $Q_{0}\left(
x|a,b,q\right)  \allowbreak=\allowbreak1$.

\begin{remark}
Notice that polynomials $Q_{n}$ depend only on $s=a+b$ and $p\allowbreak
=\allowbreak ab$. These are their real parameters. Notice also, that for any
set of values of $s$ and $p$ we get two sets of values of $a$ and $b$
(possibly complex). Further notice, that if $a+b$ and $ab$ are real then each
polynomial $Q_{n}$ has real coefficients. This, of course, may happen if
either both $a$ and $b$ are real or they form a conjugate pair.
\end{remark}

\begin{remark}
It follows from Favard's theorem (\cite{IA}), that if $\left\vert
ab\right\vert \leq1$, then there exists a positive measure with respect to
which polynomials $Q_{n}$ are orthogonal.
\end{remark}

Comparing (\ref{AlSC1}) with (\ref{big}) one can notice that when
$b\allowbreak=\allowbreak0$, then we deal with big $q$-Hermite polynomials.

Sometimes, it turns out to be useful to change parameters; in the case when
$ab\geq0$ we will sometimes introduce parameters $\rho$ and $y$ defined by the
following relationships:
\[
a+b\allowbreak=\allowbreak2\rho y,~~ab\allowbreak=\allowbreak\rho^{2}.
\]
Polynomials $Q_{n}$ with these parameters will be denoted by $p_{n}$. More
precisely, for $\rho$ and $y$ defined by the above mentioned relationship, we
define%
\begin{equation}
p_{n}(x|y,\rho,q)=Q_{n}(x|a,b,q). \label{pn}%
\end{equation}

\subsubsection{Orthogonal relations}

For details see \cite{KLS}(14.8.3). We have the following orthogonality
relationships (see \cite{KLS}(14.8.2)) satisfied for $\left\vert a\right\vert
,\left\vert b\right\vert <1$:%
\begin{equation}
\int_{-1}^{1}Q_{n}\left(  x|a,b,q\right)  Q_{m}\left(  x|a,b,q\right)
f_{Q}\left(  x|a,b,q\right)  dx=\left\{
\begin{array}
[c]{ccc}%
0 & \text{if} & n\neq m\\
\left(  q\right)  _{n}\left(  ab\right)  _{n} & \text{if} & m=m
\end{array}
\right.  , \label{QnQm}%
\end{equation}
where
\begin{equation}
f_{Q}\left(  x|a,b,q\right)  =(ab)_{\infty}f_{h}(x|h)\varphi_{h}%
(x|a,q)\varphi_{h}(x|b,q). \label{gASC}%
\end{equation}
As in the case of big $q-$Hermite polynomials, if one of the parameters $a$
and $b$ is greater than $1$, then the measure that makes ASC polynomials
orthogonal, has $\#\{k:1<aq^{k}<a\}$ atoms located at points $x_{k}$ defined
by (\ref{atoms}) with weights given by:%
\[
\hat{w}_{k}\allowbreak=\allowbreak\frac{(a^{-2})_{\infty}(1-a^{2}q^{2k}%
)(a^{2},ab)_{k}}{(b/a)_{\infty}(1-a^{2})(q,aq/b)_{k}}q^{-k^{2}}(\frac{1}%
{a^{3}b})^{k}.
\]

\begin{remark}
\label{simpl}Notice that following (\cite{Szabl-intAW}, (2.7) ) for
$\left\vert a\right\vert ,\left\vert b\right\vert <1$, the density $f_{Q}$ can
expanded in the following way:%
\begin{equation}
f_{Q}(x|a,b,q)=f_{h}\left(  x|q\right)  \sum_{j=0}^{\infty}\frac{S_{j}%
^{(2)}\left(  a,b\right)  }{\left(  q\right)  _{j}}h_{j}\left(  x|q\right)  ,
\label{mehler}%
\end{equation}
where $S_{j}^{(2)}\left(  a,b\right)  \allowbreak=\allowbreak\sum_{n=0}^{j}%
%TCIMACRO{\QATOPD{[}{]}{j}{n}}%
%BeginExpansion
\genfrac{[}{]}{0pt}{}{j}{n}%
%EndExpansion
_{q}a^{n}b^{j-n}$ and also (following \cite{Szab-bAW}, (2.13)) in the
following way:%
\[
f_{Q}(x|a,b,c)=f_{bh}(x|b,q)\sum_{j\geq0}\frac{a^{j}}{(q)_{j}}h_{j}(x|b,q).
\]
Notice also that
\begin{align*}
w\left(  x,\frac{a+b}{2(ab)^{1/2}}|(ab)^{1/2}\right)   &  =(1+a^{2}%
)(1+b^{2})-2x(a+b)(1+ab)+4x^{2}ab\\
&  =v(x|a)v(x|b),
\end{align*}
$w(x,y|a)$ is given by (\ref{wxya}). So the density $f_{Q}$ can be written as
\begin{equation}
f_{Q}(x|a,b,q)\allowbreak=\allowbreak f_{h}(x|q)\frac{(ab)_{\infty}}%
{\prod_{j=0}^{\infty}w\left(  x,\frac{a+b}{2(ab)^{1/2}}|(ab)^{1/2}%
q^{j}\right)  }. \label{n_form}%
\end{equation}
Again, this observation will be important in the next part dedicated mostly to
probabilistic interpretations.
\end{remark}

\subsubsection{Connection coefficients and other finite expansions}

In \cite{bms} (see Remark 1 following Theorem 1) there has been shown (for
some specific values of $a$ and $b$ but that can be easily extended to the
general case ) the following formula combining ASC, $q$-Hermite and $q^{-1}%
$-Hermite $\forall n\geq-1,x,a,b,q\in\mathbb{C}$:%
\begin{equation}
Q_{n}(x|a,b,q)=\sum_{k=0}^{n}%
%TCIMACRO{\QATOPD{[}{]}{n}{k}}%
%BeginExpansion
\genfrac{[}{]}{0pt}{}{n}{k}%
%EndExpansion
_{q}(ab)^{(n-k)/2}b_{n-k}\left(  \frac{(a+b)}{2\sqrt{ab}}|q\right)
h_{k}(x|q). \label{Qbh}%
\end{equation}

We have below, the other formula combining ASC and $q$-Hermite polynomials,
namely for $\forall n\geq-1,x,a,b,q$:%
\begin{equation}
h_{n}(x|q)=\sum_{k=0}^{n}%
%TCIMACRO{\QATOPD{[}{]}{n}{k}}%
%BeginExpansion
\genfrac{[}{]}{0pt}{}{n}{k}%
%EndExpansion
_{q}(ab)^{(n-k)/2}h_{n-k}\left(  \frac{(a+b)}{2\sqrt{ab}}|q\right)
Q_{k}(x|a,b,q). \label{hhQ}%
\end{equation}

As far as the relationships with big $q-$Hermite polynomials are concerned we
have for $n\geq0:$%
\[
Q_{n}(x|a,b,q)=\sum_{j}^{n}%
%TCIMACRO{\QATOPD{[}{]}{n}{j}}%
%BeginExpansion
\genfrac{[}{]}{0pt}{}{n}{j}%
%EndExpansion
_{q}(-1)^{j}b^{j}q^{\binom{j}{2}}h_{n-j}(x|a,q),
\]
which can be easily justified given the forms of generating functions of the
ASC and big $q-$Hermite polynomials as well as (\ref{obinT}).

We have also the following formula that combines ASC polynomials with
different parameters, namely for all $n\geq-1$ and all $x,a,b,c,d,q$ we have%
\begin{equation}
Q_{n}(x|a,b,q)=\sum_{j=0}^{n}%
%TCIMACRO{\QATOPD{[}{]}{n}{j}}%
%BeginExpansion
\genfrac{[}{]}{0pt}{}{n}{j}%
%EndExpansion
_{q}(cd)^{(n-j)/2}Q_{j}(x|c,d,q)Q_{n-j}\left(  \frac{c+d}{2(cd)^{1/2}}%
|\frac{a}{(cd)^{1/2}},\frac{b}{(cd)^{1/2}},q\right)  . \label{abcd}%
\end{equation}
That was proved in \cite{SzablKer} for some complex values of $a,b,c,d$, but
that can be easily extended to the above-mentioned form since on both sides of
this formula we deal with the rational function of polynomials in $x,a,b,c,d$.

Notice that if say $cd\geq0$, then defining parameters $y$ and $\rho$ by the
relationships $cd\allowbreak=\allowbreak\rho^{2}$ and $2\rho y\allowbreak
=\allowbreak c+d$ and introducing polynomials
\begin{equation}
p_{n}(x|y,\rho,q)=Q_{n}(x|c,d,q), \label{pby Q}%
\end{equation}
and denoting the density $f_{Q}$ with new parameters by $f_{p}$ i.e.,%
\[
f_{Q}(x|c,d,q)\allowbreak=\allowbreak f_{p}(x|y,\rho,q),
\]
the identity (\ref{abcd}) takes now the more friendly form for all $n\geq0$
and $\rho\neq0$:%
\[
Q_{n}(x|a,b,q)\allowbreak=\allowbreak\sum_{j=0}^{n}%
%TCIMACRO{\QATOPD{[}{]}{n}{j}}%
%BeginExpansion
\genfrac{[}{]}{0pt}{}{n}{j}%
%EndExpansion
_{q}\rho^{n-j}p_{j}(x|y,\rho,q)Q_{n-j}(y|a/\rho,b/\rho,q).
\]

Below, we present yet another formula combining two families of
ASC\ polynomials with so to say a reversed r\^{o}le of argument and parameters:%

\begin{gather}
\frac{Q_{n}(x|a,b,q)}{(ab)_{n}}=\sum_{j=0}^{n}%
%TCIMACRO{\QATOPD{[}{]}{n}{j}}%
%BeginExpansion
\genfrac{[}{]}{0pt}{}{n}{j}%
%EndExpansion
_{q}(-1)^{j}q^{\binom{j}{2}}(ab)^{j/2}h_{n-j}(x|q)\label{odwr}\\
\times\frac{Q_{j}\left(  \frac{a+b}{2(ab)^{1/2}}|(ab)^{1/2}(x+(x^{2}%
-1)^{1/2}),(ab)^{1/2}(x-(x^{2}-1)^{1/2}),q\right)  }{(ab)_{j}}.\nonumber
\end{gather}

Similarly, if $ab\geq0$ and we introduce parameters $\rho$ and $y$ by the
relationship $2\rho y\allowbreak=\allowbreak a+b$ and $\rho^{2}\allowbreak
=\allowbreak ab$ and the polynomials $p_{n}$, then (\ref{odwr}) takes the more
friendly form:%
\[
p_{n}(x|y,\rho,q)/(\rho^{2})_{n}=\sum_{j=0}^{n}%
%TCIMACRO{\QATOPD{[}{]}{n}{j}}%
%BeginExpansion
\genfrac{[}{]}{0pt}{}{n}{j}%
%EndExpansion
_{q}(-1)^{j}q^{\binom{j}{2}}\rho^{j}h_{n-j}(x|q)p_{j}(y|x,\rho,q)/\left(
\rho^{2}\right)  _{j}.
\]

Again, this formula was proved in \cite{Szab6} (Corollary 3) for complex
conjugate parameters $a$ and $b$ and $ab\geq0$. Since, however, on both sides
of the above-mentioned formula, we have rational functions in all arguments
$x,y,\rho,q$ we can naturally extend it to complex values of arguments.

\begin{remark}
From (\ref{hhQ}) follows directly the following integration formula that is
very important for the probabilistic interpretation:%
\[
\int_{-1}^{1}h_{n}(x|q)f_{Q}(x|a,b,q)\allowbreak dx=\allowbreak(ab)^{n/2}%
h_{n}\left(  \frac{a+b}{2(ab)^{1/2}}|q\right)  ,
\]
for all $n\geq1$. While from (\ref{odwr}), after introducing new parameters
defined by $\rho^{2}\allowbreak=\allowbreak ab$ and $2y\rho\allowbreak
=\allowbreak a+b$ and polynomials $p_{n}$ and assuming $ab\geq0,$ follows the
other more readable form of the formula above as well as the following
integration formula also important for the probabilistic interpretation:%
\begin{align*}
\allowbreak\int_{-1}^{1}h_{n}(x|q)f_{p}(x|y,\rho,q)\allowbreak dx  &
=\allowbreak\rho^{n}h_{n}(y|q),\\
\int_{-1}^{1}p_{n}(x|y,\rho,q)f_{p}(y|x,\rho,q)dy  &  =\allowbreak(\rho
^{2})_{n}h_{n}(x|q).
\end{align*}

for $\max(\left\vert x\right\vert ,\left\vert \rho\right\vert ,\left\vert
y\right\vert )\leq1$.
\end{remark}

All formulae (\ref{Qbh}), (\ref{hhQ}), (\ref{abcd}) and (\ref{odwr}) look much
more friendly when applied to parameters that form conjugate pairs and, what
is more, have nice probabilistic interpretations allowing to construct some
Markov processes compactly supported. See their interpretations presented in
Part \ref{prob}.

\subsubsection{Generating function}

We have
\[
\sum_{k=0}^{\infty}\frac{t^{k}}{\left(  q\right)  _{k}}Q_{k}\left(
x|a,b,q\right)  =\left(  at,bt\right)  _{\infty}\varphi_{h}(x|t,q),
\]
for all $\left\vert t\right\vert ,\left\vert q\right\vert ,\left\vert
ab\right\vert <1$.

Again, there exist many more formulae for the generating functions of ASC
polynomials. They are given say by the formulae (3.8.14)-(3.816) in
\cite{KLS-arX}.

We have, however, another two ones that are based on DEI((2.16),((2.9) of
\cite{Szab-bAW}) and DEI((3,11),(2.11) \cite{Szab-bAW}). Namely, for all
$\max(\left\vert x\right\vert ,\left\vert a\right\vert ,\left\vert
b\right\vert ,\left\vert c\right\vert ,\left\vert d\right\vert ,\left\vert
q\right\vert )<1$ we have%
\begin{gather*}
\sum_{j\geq0}\frac{(cd)^{j/2}}{(abcd,q)_{j}}Q_{j}(x|a,b,q)Q_{j}\left(
\frac{c+d}{2(cd)^{1/2}}|a(cd)^{1/2},b(cd)^{1/2},q\right) \\
=\frac{(ac,ad,bc,bd,cd)_{\infty}}{(abcd)_{\infty}}\varphi_{h}(x|c,q)\varphi
_{h}(x|d,q),\\
\sum_{k\geq0}\frac{(ab)^{k/2}}{(q)_{k}(ab)_{k}}b_{k}\left(  \frac{a+b}%
{2\sqrt{ab}}|q\right)  Q_{k}(x|a,b,q)=\frac{1}{(ab)_{\infty}}\prod
_{j=0}^{\infty}v(x|aq^{j})v(x|bq^{j}).
\end{gather*}

\begin{remark}
Comparing formulae (2.16) and (3.11) of \cite{Szab-bAW}, we get for free the
following, otherwise hard to prove, summation formula:%
\begin{equation}
\sum_{k=0}^{n}%
%TCIMACRO{\QATOPD{[}{]}{n}{k}}%
%BeginExpansion
\genfrac{[}{]}{0pt}{}{n}{k}%
%EndExpansion
_{q}c^{n-k}d^{k}\frac{(ac,bc)_{k}}{(abcd)_{k}}=\frac{(cd)^{n/2}}{(abcd)_{n}%
}Q_{n}\left(  \frac{c+d}{2(cd)^{1/2}}|a(cd)^{1/2},b(cd)^{1/2},q\right)  ,
\label{id_sum}%
\end{equation}
that is true for all $n\geq0$ and all complex $a,b,c,d,$ $q$ such $~$that
$abcdq^{k}\neq1$ for all $k\geq0$. This is so, since first we check the
validity of (\ref{id_sum}) for $\max(\left\vert a\right\vert ,\left\vert
b\right\vert ,\left\vert c\right\vert ,\left\vert d\right\vert ,\left\vert
q\right\vert )<1$ and then extend it, because on both sides we have rational functions.
\end{remark}

\begin{remark}
When $q\allowbreak=\allowbreak0$ then, (\ref{AlSC1}) is reduced to%
\[
2xQ_{n}(x|a,b,0)=Q_{n+1}(x|a,b,0)+Q_{n-1}(x|a,b,0),
\]
for $n\geq2,$ with $Q_{-1}(x|a,b,0)\allowbreak=\allowbreak0,$ $Q_{0}%
(x|a,b,0)\allowbreak=\allowbreak1$, $Q_{1}(x|a,b,0)\allowbreak=\allowbreak
U_{1}(x)\allowbreak-\allowbreak(a+b)U_{0}(x),$ $U_{2}(x|a,b,0)\allowbreak
=\allowbreak U_{2}(x)-(a+b)U_{1}(x)+abU_{0}(x),$ while the formula
(\ref{gASC}) to
\begin{equation}
f_{Q}(x|a,b,0)\allowbreak=\allowbreak\frac{2(1-ab)\sqrt{1-x^{2}}}%
{\pi(1-2xa+a^{2})(1-2xb+b^{2})}. \label{gASC0}%
\end{equation}
Following \cite{Szab4} and the fact that $\frac{2\sqrt{1-x^{2}}}{\pi}$ is the
density with respect to which Chebyshev polynomials of the second kind
$\left\{  U_{n}\right\}  _{n\geq-1}$ are orthogonal, we deduce that $n$-th
polynomial of the family must have the form of the linear combination of the
last $3$ (i.e.,$n$-th, $n-1$-th and $n-2$-th) Chebyshev polynomials of the
first kind. Taking into account the fact that polynomials of the form
\[
U_{n}(x)-(a+b)U_{n-1}(x)+abU_{n-2}(x)
\]
satisfy (\ref{gASC0}) for $n\geq2$ and for $n\allowbreak=\allowbreak-1,0,1,2$
these polynomials are polynomials $Q_{n}$ given above, hence we deduce that
\[
Q_{n}(x|a,b,0)\allowbreak=\allowbreak U_{n}(x)-(a+b)U_{n-1}(x)+abU_{n-2}(x),
\]
for all $n\geq-1$.
\end{remark}

\subsection{$q^{-1}-$ Al-Salam-Chihara}

As before, we consider an auxiliary family of polynomials, that we call
$q^{-1}$ Al-Salam-Chihara ( $q^{-1}-$ASC) polynomials. They are defined, as
before, by
\begin{equation}
\hat{Q}_{n}(x|a,b,q)=(-1)^{n}q^{\binom{n}{2}}Q_{n}(x|a,b,q^{-1}),
\label{odwrQ}%
\end{equation}
for $q\neq0$ and as a limit when $q\rightarrow0$ when $q\allowbreak
=\allowbreak0$.

\subsubsection{Three-term recurrence}

It is elementary to check, that they satisfy the following three-term
recurrence:%
\[
\hat{Q}_{n+1}(x|a,b,q)=-2(xq^{n}-(a+b))\hat{Q}_{n}\left(  x|a,b,q\right)
-(1-q^{n})(ab-q^{n-1})\hat{Q}_{n-1}\left(  x|y,\rho,q\right)  ,
\]
with $\hat{Q}_{-1}(x|a,b,q)\allowbreak=\allowbreak0,$ $\hat{Q}_{0}%
(x|a,b,q)\allowbreak=\allowbreak1$.

In \cite{Szab-bAW}( Proposition 3.1(iv)) it has been shown (for complex
conjugate parameters, but this can easily be extended to all cases of
parameters) that for $\forall n\geq1:$%
\begin{equation}
\sum_{j=0}^{n}%
%TCIMACRO{\QATOPD{[}{]}{n}{j}}%
%BeginExpansion
\genfrac{[}{]}{0pt}{}{n}{j}%
%EndExpansion
_{q}Q_{j}(x|a,b,q)\hat{Q}_{n-j}(x|a,b,q)=0. \label{ident}%
\end{equation}
From this identity almost directly follows the following form of the
generating function:%
\begin{equation}
\sum_{n\geq0}\frac{t^{n}}{(q)_{n}}\hat{Q}_{n}(x|a,b,q)=\frac{1}%
{(at,bt)_{\infty}}\prod_{j=0}^{\infty}v(x|tq^{j}). \label{ofdrgen}%
\end{equation}
Apart from these facts we have the following unexpected relationship proved in
\cite{Szab-bAW}, Prop 3.1(i) (again proved for the complex, mutually,
conjugate pair, but can be easily extended since we deal with polynomials).
For $\forall n\geq0$ and $ab\neq0$ we have%
\[
\hat{Q}_{n}(x|a,b,q)=(ab)^{n/2}Q_{n}\left(  \frac{a+b}{2(ab)^{1/2}}%
|\frac{x-(x^{2}-1)^{1/2}}{(ab)^{1/2}},\frac{x+(x^{2}-1)^{1/2}}{(ab)^{1/2}%
},q\right)  ,
\]
and $\hat{Q}_{n}(x|a,b,q)=b_{n}(x|q)$ if $ab=0$. Again, the above-mentioned
identity has a more friendly form, if one switches to polynomials $p_{n}$
defined by (\ref{pby Q}). Namely, we have%
\[
\hat{p}_{n}(x|y,\rho,q)\allowbreak=\allowbreak\rho^{n}p_{n}(y|x,1/\rho,q),
\]
for $\rho\neq0$ and $\hat{p}_{n}(x|y,0,q)\allowbreak=\allowbreak b_{n}(x|q)$
for $\rho\allowbreak=\allowbreak0$.

\begin{remark}
Following either (\ref{ident}) or (\ref{ofdrgen}) and setting $q\allowbreak
=\allowbreak0$, we see that
\[
\hat{Q}_{n}(x|a,b,0)=-2xD_{n}(a,b)+D_{n-1}(a,b)+D_{n+1}(a,b),
\]
where $D_{n}(a,b)=\left\{
\begin{array}
[c]{ccc}%
(a^{n}-b^{n})/(a-b) & \text{if} & a\neq b\\
na^{n-1} & \text{if} & a=b
\end{array}
\right.  $.
\end{remark}

\section{$3$-Parameter family}

\subsection{Continuous dual Hahn polynomials}

\subsubsection{3 term recurrence}

Out of the $3-$ parameter family of polynomials of the AW scheme, we will
consider only the continuous dual Hahn polynomials (briefly cdH polynomials).
Following \cite{Szab-bAW}, they satisfy the following three-term recurrence
for $n\geq1$:%
\begin{equation}
(2x-e_{n}(a,b,c|q))\psi_{n}(x|a,b,c,q)=\psi_{n+1}(x|a,b,c,q)+f_{n}%
(a,b,c|q)\psi_{n-1}(x|a,b,c,q), \label{3cdH}%
\end{equation}
where
\[
f_{n}(a,b,c|q)\allowbreak=\allowbreak(1-q^{n})(1-abq^{n-1})(1-cbq^{n-1}%
)(1-acq^{n-1}),
\]
and
\[
e_{n}(a,b,c|q)\allowbreak=\allowbreak(a+b+c)q^{n}\allowbreak+\allowbreak
abcq^{n-1}(1-q^{n}(1+q)),
\]
with $\psi_{-1}(x|a,b,c|,q)\allowbreak=\allowbreak0,$ $\psi_{0}%
(x|a,c,b,c,q)\allowbreak=\allowbreak1,$ \newline$\psi_{1}%
(x|a,c,b,c,q)\allowbreak=\allowbreak2x\allowbreak-\allowbreak(a+b+c)-abc$.

\begin{remark}
As in the case of ASC polynomials, $\psi$ depends, in fact, on parameters
$s_{1}\allowbreak=\allowbreak a+b+c+d,$ $s_{2}\allowbreak=\allowbreak
ab\allowbreak+\allowbreak ac\allowbreak+\allowbreak cb$ and $s_{3}%
\allowbreak=\allowbreak abc$. This is so we have $f_{n}(a,b,c|q)\allowbreak
=\allowbreak(1-q^{n})(1-s_{2}q^{n-1}\allowbreak+\allowbreak s_{1}s_{3}%
q^{2n-2}\allowbreak-\allowbreak s_{3}^{2}q^{3n-3})$ and $e_{n}%
(a,b,c|q)\allowbreak=\allowbreak s_{1}q^{n}\allowbreak+\allowbreak
s_{3}q^{n-1}(1-q^{n}(1+q))$. By the fundamental theorem of algebra we know
that for every triplet of parameters $(s_{1},s_{2},s_{3})$ we can find $3$
triplets of parameters $(a,b,c)$.
\end{remark}

\subsubsection{Orthogonality relations}

As before, if $\max(\left\vert a\right\vert ,\left\vert b\right\vert
,\left\vert c\right\vert )\leq1$, then the measure that makes these
polynomials orthogonal, is absolutely continuous with the density
\begin{equation}
f_{\psi}\left(  x|a,b,c,q\right)  \allowbreak=\allowbreak\left(
ab,ac,bc\right)  _{\infty}\varphi_{h}\left(  x|a,q\right)  \varphi_{h}\left(
x|b,q\right)  \varphi_{h}\left(  x|c,q\right)  f_{h}\left(  x|q\right)  .
\label{gest_cdH}%
\end{equation}
If one of the parameters, say $a>1,$ then this measure has $\#\{k:1<aq^{k}%
<a\}$ atoms at points (\ref{atoms}) with masses given by (formula (3.3.3) in
\cite{KLS-arX}).

Besides, for $\max(\left\vert a\right\vert ,\left\vert b\right\vert
,\left\vert c\right\vert )<1$ we have
\begin{equation}
\int_{\lbrack-1,1]}\psi_{n}\left(  x|a,b,c,q\right)  \psi_{m}\left(
x|a,b,c,q\right)  f_{\psi}\left(  x|a,b,c,q\right)  dx=\delta_{mn}\left(
ab,ac,bc,q\right)  _{n}. \label{psi2}%
\end{equation}

\begin{remark}
It turns out (see, e.g., \cite{Szabl-intAW}, Lemma2) that the density
$f_{\psi}$ can be expanded in the following way, when $\max(\left\vert
a\right\vert ,\left\vert b\right\vert ,\left\vert c\right\vert )\leq1$:%
\[
f_{\psi}\left(  x|a,b,c,q\right)  =f_{h}\left(  x|q\right)  \sum_{n\geq0}%
\frac{\sigma_{n}^{\left(  3\right)  }\left(  a,b,c|q\right)  }{\left(
q\right)  _{n}}h_{n}\left(  x|q\right)  ,
\]
where
\[
\sigma_{n}^{\left(  3\right)  }\left(  a,b,c|q\right)  =\sum_{j=0}^{n}%
%TCIMACRO{\QATOPD{[}{]}{n}{j}}%
%BeginExpansion
\genfrac{[}{]}{0pt}{}{n}{j}%
%EndExpansion
_{q}q^{\binom{j}{2}}\left(  -abc\right)  ^{j}S_{n-j}^{\left(  3\right)
}\left(  a,b,c|q\right)  ,
\]
and
\[
S_{n}^{\left(  3\right)  }\left(  a,b,c|q\right)  =\sum_{\substack{j,k,\geq0,
\\j+k\leq n}}\frac{(q)_{n}}{(q)_{j}(q)_{k}(q)_{n-k-j}}a^{k}b^{j}c^{n-j-k}.
\]

Following \cite{Szab-bAW}(2.13 with $d\allowbreak=\allowbreak0$) we also have%
\[
f_{\psi}\left(  x|a,b,c,q\right)  =f_{Q}(x|b,c,q)\sum_{j\geq0}\frac{a^{j}%
}{(q)_{j}}Q_{j}(x|b,c,q).
\]

\end{remark}

\subsubsection{Connection coefficients and other finite expansions}

Following \cite{AW85} (general formula for the connection coefficients between
AW polynomials with different sets of parameters), we have the following
formula for the connection coefficients between cdH and ASC polynomials.%
\begin{align}
\psi_{n}(x|a,b,c,q)  &  =\sum_{k=0}^{n}%
%TCIMACRO{\QATOPD{[}{]}{n}{k}}%
%BeginExpansion
\genfrac{[}{]}{0pt}{}{n}{k}%
%EndExpansion
_{q}(-a)^{n-k}q^{\binom{n-k}{2}}(bcq^{k})_{n-k}Q_{k}(x|b,c,q),\label{psinaQ}\\
Q_{n}(x|a,b,q)  &  =\sum_{k=0}^{n}%
%TCIMACRO{\QATOPD{[}{]}{n}{k}}%
%BeginExpansion
\genfrac{[}{]}{0pt}{}{n}{k}%
%EndExpansion
_{q}c^{n-k}(abq^{k})_{n-k}\psi_{k}(x|a,b,c,q)\text{.} \label{Qna psi}%
\end{align}

\subsubsection{Generating function}

In fact, we have at least $5$ different generating functions. The first four
are given by the formulae (3.3.13)-(3.3.16) in \cite{KLS-arX}. The fifth one
is based on DEI((\ref{psinaQ}),(\ref{psi2})). We have for $\max(\left\vert
x\right\vert ,\left\vert a\right\vert ,\left\vert b\right\vert ,\left\vert
c\right\vert ,\left\vert q\right\vert )<1$:%
\begin{equation}
\sum_{j\geq0}q^{\binom{j}{2}}\frac{(-c)^{j}}{(ac)_{j}(bc)_{j}(q)_{j}}\psi
_{j}(x|a,b,c,q)=\frac{\prod_{k=0}^{\infty}v(x|cq^{k})}{(bc)_{\infty
}(ac)_{\infty}}. \label{genpsi}%
\end{equation}
The sixth was derived first by Atakishiyeva and Atakishiyev in \cite{At-At11}
and is the following
\[
\sum_{j}\frac{a^{j}}{\left(  abcd,q\right)  _{j}}\psi_{j}\left(
x|b,c,d,q\right)  =\frac{\left(  ab,ac,ad\right)  _{\infty}}{\left(
abcd\right)  _{\infty}}\varphi_{h}\left(  x|a,q\right)  ,
\]
that is true for $\max(\left\vert a\right\vert ,\left\vert b\right\vert
,\left\vert c\right\vert ,\left\vert d\right\vert ,\left\vert q\right\vert
)<1$. However, following DEI((\ref{c_na_a}),(\ref{psi2})), we can get it
almost immediately.

\begin{remark}
Let us set $q\allowbreak=\allowbreak0$. Then
\[
f_{\psi}(x|a,b,c,0)=\frac{2\sqrt{1-x^{2}}}{\pi}\frac{1}{v(x|a)v(x|b)v(x|c)}.
\]
Now let us consider the fact that $\frac{2\sqrt{1-x^{2}}}{\pi}$ is the density
with respect to which Chebyshev polynomials of the second kind $\left\{
U_{n}\right\}  $ are orthogonal and let us follow the ideas of \cite{Szab4}.
Thus we deduce that the $n$-th polynomial of the family must be of the form of
the linear combination of the last $4$ (i.e.,$n$-th, $n-1$-th, $n-2$-th and
$n-3$-th) Chebyshev polynomials of the second kind. This is so since
$v(x|a)v(x|b)v(x|c)$ is the polynomial of order $3$. Secondly notice, that for
$n\geq2$ (\ref{3cdH}) takes the form:%
\[
2x\psi_{n}(x|a,b,c,0)=\psi_{n+1}(x|a,b,c,0)+\psi_{n-1}(x|a,b,c,0),
\]
that is the form of the three-term recurrence defining Chebyshev polynomials.
Taking into account the fact that polynomials of the form
\[
U_{n}(x)-(a+b+c)U_{n-1}(x)+(ab+bc+ac)U_{n-2}(x)-abcU_{n-3}%
\]
satisfy the above-mentioned three term recurrence for $n\geq2$. For
$n\allowbreak=\allowbreak-1,0,1,2$ these polynomials are polynomials $\psi
_{n}$ (one has to remember that $U_{-2}(x)\allowbreak=\allowbreak
-U_{0}(x)\allowbreak=\allowbreak-1)$ given above hence we deduce that
\[
\psi_{n}(x|a,b,0)\allowbreak=\allowbreak U_{n}(x)-(a+b+c)U_{n-1}%
(x)+(ab+ac+bc)U_{n-2}(x)-abcU_{n-3}(x),
\]
for all $n\geq-1$.
\end{remark}

\subsection{$q^{-1}$ Continuous dual Hahn polynomials}

They are defined as
\[
\hat{\psi}_{n}(x|a,b,c,q)\allowbreak=\allowbreak(-1)^{n}q^{\binom{n}{2}}%
\psi_{n}(x|a,b,c,q^{-1})
\]
if $q\neq0$ and $\hat{\psi}_{n}(x|a,b,c,0)\allowbreak=\allowbreak(-1)^{n}%
\psi_{n}(x|a,b,c,0)$ for $n\allowbreak=\allowbreak-1,0,1$ and $\psi
_{n}(x|a,b,c,0)\allowbreak=\allowbreak0$ for $n\geq2$.

$\left\{  \hat{\psi}_{n}\right\}  $ satisfy the following three-term
recurrence:%
\begin{gather*}
\hat{\psi}_{n+1}(x|a,b,c,q)\allowbreak=\allowbreak-(2xq^{n}-q^{n}%
e_{n}(a,b,c|q^{-1}))\hat{\psi}_{n}(x|a,b,c,q)\\
-q^{2n-1}f_{n}(a,b,c|q^{-1})\hat{\psi}_{n-1}(x|a,b,c,q),
\end{gather*}
with $\hat{\psi}_{-1}(x|a,b,c,q)\allowbreak=\allowbreak0,$ $\hat{\psi}%
_{0}(x|a,b,c,q)\allowbreak=\allowbreak1$.

\begin{remark}
Numerical simulations suggest, that the following identity holds $\forall
n\geq1:$%
\begin{equation}
\sum_{j=0}^{n}%
%TCIMACRO{\QATOPD{[}{]}{n}{j}}%
%BeginExpansion
\genfrac{[}{]}{0pt}{}{n}{j}%
%EndExpansion
_{q}\psi_{j}(x|a,b,c,d,q)\hat{\psi}_{n-j}(x|a,b,c,q)\allowbreak=\allowbreak0.
\label{ident3}%
\end{equation}
Notice that this identity is true for $q\allowbreak=\allowbreak0$ and
$n\geq0,$ since we have $\hat{\psi}_{n}(x|a,b,c,0)\allowbreak=\allowbreak
(-1)^{n},$ $n\allowbreak=\allowbreak-1,1$ and $\hat{\psi}_{n}%
(x|a,b,c,0)\allowbreak=\allowbreak0$ for $n\geq2$. In this case (\ref{ident3})
reduces itself to three-term recurrence satisfied by polynomials $\psi_{n}$,
hence is true for all $n\geq1$. Is it true in general, i.e., for all $q$?
\end{remark}

\section{$4-$Parameter family}

\subsection{Askey--Wilson polynomials}

$4-$parameter family of polynomials of the AW scheme are simply the
Askey-Wilson polynomials (briefly AW polynomials) introduced and described in
'85 in the celebrated paper \cite{AW85}.

\subsubsection{3 term recurrence}

Taking into account remarks in \cite{Szab-bAW}, they satisfy the following
three-term recurrence for $n\geq1$:%
\begin{equation}
2x\alpha_{n}\left(  x\right)  =\alpha_{n+1}\left(  x\right)  +e_{n}\left(
a,b,c,d,q\right)  \alpha_{n}(x)-f_{n}\left(  a,b,c,d,q\right)  \alpha
_{n-1}(x), \label{_aw}%
\end{equation}
with $\alpha_{-1}\left(  x\right)  \allowbreak=\allowbreak0,$ $\alpha
_{0}\left(  x\right)  \allowbreak=\allowbreak1,$ where for simplicity, we
denoted by
\begin{gather*}
f_{n}(a,b,c,d,q)=(1-q^{n})\times\\
\frac{\left(  abq^{n-1},acq^{n-1},adq^{n-1},bcq^{n-1},bdq^{n-1},cdq^{n-1}%
,abcdq^{n-2}\right)  _{1}}{(abcdq^{2n-3})_{3}(abcdq^{2n-2})_{1}},\\
e_{n}(a,b,c,d,q)=\frac{q^{n-2}}{(1-abcdq^{2n-2})\left(  1-abcdq^{2n}\right)
}\times\\
((a+b+c+d)(q^{2}-abcdq^{n}(1+q-q^{n+1}))+\\
(abc+abd+acd+bcd)(q-q^{n+2}-q^{n+1}+abcdq^{2n})).
\end{gather*}
In fact, the polynomials $\alpha_{n}$ are related to the polynomials $p_{n}$,
defined say in \cite{KLS-arX}, in the following way:%
\[
\alpha_{n}\left(  x|a,b,c,d,q\right)  \allowbreak=\allowbreak p_{n}\left(
x|a,b,c,d,q\right)  /\left(  abcdq^{n-1}\right)  _{n}.
\]
far all $n\geq-1$.

\subsubsection{Orthogonality relations}

Following \cite{Szab-bAW}, we have for $\max(\left\vert a\right\vert
,\left\vert b\right\vert ,\left\vert c\right\vert ,\left\vert d\right\vert
)\allowbreak<\allowbreak1$
\begin{gather}
\int_{\lbrack-1,1]}\alpha_{n}\left(  x|a,b,c,d,q\right)  \alpha_{m}\left(
x|a,b,c,d,q\right)  f_{AW}\left(  x|a,b,c,d,q\right)  dx\label{alfa^2}\\
=\delta_{mn}\frac{\left(  abcdq^{n-1}\right)  _{n}\left(
ab,ac,ad,bc,bd,cd,q\right)  _{n}}{\left(  abcd\right)  _{2n}},\nonumber
\end{gather}
where
\begin{gather}
f_{AW}\left(  x|a,b,c,d,q\right)  =f_{h}\left(  x|q\right)  \varphi_{h}\left(
x|a,q\right)  \varphi_{h}\left(  x|b,q\right)  \varphi_{h}\left(
x|c,q\right)  \varphi_{h}\left(  x|d,q\right)  \times\label{fAW}\\
\frac{\left(  ab,ac,ad,bc,bd,cd\right)  _{\infty}}{\left(  abcd\right)
_{\infty}}.\nonumber
\end{gather}

If one of the parameters, say $a>1,$ then, as before, there exist atoms of the
measure orthogonalizing polynomials $AW$. They are $\#\{k:1<aq^{k}<a\}$ atoms
consecrated at points (\ref{atoms}) with masses given by the formula below
(3.1.3) of \cite{KLS-arX}.

\subsubsection{Connection coefficients and other finite expansions}

Following \cite{Szab-bAW}( Corollary 2.1), we get
\begin{gather}
\alpha_{n}(x|a,b,c,d,q)=\sum_{k=0}^{n}\left[
%TCIMACRO{\QATOP{n}{k}}%
%BeginExpansion
\genfrac{}{}{0pt}{}{n}{k}%
%EndExpansion
\right]  _{q}\left(  -1\right)  ^{n-k}q^{\binom{n-k}{2}}\left(  cdq^{k}%
\right)  _{n-k}Q_{k}\left(  x|c,d,q\right) \label{aw_na_asc}\\
\times\sum_{m=0}^{n-k}\left[
%TCIMACRO{\QATOP{n-k}{m}}%
%BeginExpansion
\genfrac{}{}{0pt}{}{n-k}{m}%
%EndExpansion
\right]  _{q}q^{m(m-n+k)}a^{m}b^{n-k-m}\frac{\left(  bcq^{n-m},bdq^{n-m}%
\right)  _{m}}{\left(  abcdq^{2n-m-1}\right)  _{m}}.\nonumber\\
Q_{n}\left(  x|c,d,q\right)  \allowbreak=\sum_{j=0}^{n}\left[
%TCIMACRO{\QATOP{n}{j}}%
%BeginExpansion
\genfrac{}{}{0pt}{}{n}{j}%
%EndExpansion
\right]  _{q}\left(  cdq^{j}\right)  _{n-j}\alpha_{j}\left(
x|a,b,c,d,q\right) \label{asw_na_aw}\\
\times\sum_{m=0}^{n-j}\left[
%TCIMACRO{\QATOP{n-j}{m}}%
%BeginExpansion
\genfrac{}{}{0pt}{}{n-j}{m}%
%EndExpansion
\right]  _{q}b^{n-j-m}a^{m}\frac{\left(  bcq^{j},bdq^{j}\right)  _{m}}{\left(
abcdq^{2j}\right)  _{m}},\nonumber
\end{gather}

while, following \cite{Szab-bAW}(Lemma 2.1), we have
\begin{gather}
\alpha_{n}(x|a,b,c,d,q)\allowbreak=\allowbreak\sum_{i=0}^{n}%
%TCIMACRO{\QATOPD{[}{]}{n}{i}}%
%BeginExpansion
\genfrac{[}{]}{0pt}{}{n}{i}%
%EndExpansion
_{q}\left(  -a\right)  ^{n-i}q^{\binom{n-i}{2}}\frac{\left(  bcq^{i}%
,bdq^{i},cdq^{i}\right)  _{n-i}}{\left(  abcdq^{n+i-1}\right)  _{n-i}}\psi
_{i}\left(  x|b,c,d,q\right)  ,\label{a_na_c}\\
\psi_{n}\left(  x|b,c,d,q\right)  =\sum_{i=0}^{n}%
%TCIMACRO{\QATOPD{[}{]}{n}{i}}%
%BeginExpansion
\genfrac{[}{]}{0pt}{}{n}{i}%
%EndExpansion
_{q}a^{n-i}\frac{\left(  bcq^{i},bdq^{i},cdq^{i}\right)  _{n-i}}{\left(
abcdq^{2i}\right)  _{n-i}}\alpha_{i}\left(  x|a,b,c,d,q\right)  \text{.}
\label{c_na_a}%
\end{gather}

\begin{remark}
Taking into account identity (\ref{id_sum}), we can rewrite (\ref{asw_na_aw})
in the following form. For all $n\geq1:$
\begin{gather*}
Q_{n}\left(  x|c,d,q\right)  \allowbreak=\sum_{j=0}^{n}\left[
%TCIMACRO{\QATOP{n}{j}}%
%BeginExpansion
\genfrac{}{}{0pt}{}{n}{j}%
%EndExpansion
\right]  _{q}\left(  cdq^{j}\right)  _{n-j}\alpha_{j}\left(
x|a,b,c,d,q\right) \\
\times\frac{(ab)^{(n-j)/2}}{(abcdq^{2j})_{n-j}}Q_{n-j}(\frac{a+b}{2(ab)^{1/2}%
}|cq^{j}(ab)^{1/2},dq^{j}(ab)^{1/2},q)\text{.}%
\end{gather*}

\end{remark}

\subsubsection{Generating function}

In \cite{KLS-arX} there are 3 types of generating functions of the AW
polynomials, that can be reduced to a single one. This is so since starting
from the formula (3.1.13) of \cite{KLS-arX}, the other two one can obtain by
different choices two pairs out of $3$ possible choices of two pairs out of
$\left\{  a,b,c,d\right\}  $. One can obtain, however, another generating
function based on the ideas present in say (\ref{genpsi}). That is, we
consider DEI((\ref{a_na_c}),((\ref{alfa^2})) and get for\newline%
$\max(\left\vert x\right\vert ,\left\vert a\right\vert ,\left\vert
b\right\vert ,\left\vert c\right\vert ,\left\vert d\right\vert ,\left\vert
q\right\vert )<1:$%
\[
\frac{(abcd)_{\infty}}{(ad,bd,cd)_{\infty}}\prod_{j=0}^{\infty}v\left(
x|dq^{j}\right)  =\sum_{j\geq0}(-d)^{j}q^{\binom{j}{2}}\frac{(abcd)_{2j}%
}{(abcdq^{j-1})_{j}^{2}(ad,bd,cd,q)_{j}}\alpha_{j}(x|a,b,c,d,q)\text{.}%
\]
The other $3$ similar generating functions can be obtained by changing choices
of pairs in the denominator of right hand side of the above-mentioned formula.

\begin{remark}
For $q\allowbreak=\allowbreak0$ we have $e_{n}(a,b,c,d,0)\allowbreak
=\allowbreak0$ for $n\geq3$ and $f_{n}(a,b,c,d,0)\allowbreak=\allowbreak1$ for
$n\geq4$. That is for $n\geq4$ (\ref{_aw}) is the same as the three-term
recurrence satisfied by the Chebyshev polynomials. Besides, taking into
account results of \cite{Szab13} and \cite{Szab4} we deduce that for $n\geq4$
polynomials $\left\{  \alpha_{n}(x|a,b,c,d,0)\right\}  $ are linear
combinations of $U_{n},$ $U_{n-1},$ $U_{n-2},$ $U_{n-3},$ $U_{n-4}$. To get
the coefficients of this combination, we check the first four $\alpha^{\prime
}s$. We have
\begin{gather*}
a_{1}(x|a,b,c,d,0)\allowbreak=\allowbreak U_{1}(x)-\frac
{a+b+c+d-(abc+abd+acd+bcd)}{1-abcd},\\
a_{2}(x|a,b,c,d,0)\allowbreak=\allowbreak U_{2}(x)\allowbreak-\allowbreak
(a+b+c+d)U_{1}(x)\\
+(ab\allowbreak+\allowbreak ac\allowbreak+\allowbreak ad\allowbreak
+\allowbreak bc\allowbreak+\allowbreak bd\allowbreak+\allowbreak
cd)U_{0}(x)\allowbreak+\allowbreak abcdU_{-1}(x),\\
a_{3}(x|a,b,c,d,0)\allowbreak=\allowbreak U_{3}(x)-\allowbreak(a+b+c+d)U_{2}%
(x)\allowbreak\\
+\allowbreak(ab\allowbreak+\allowbreak ac\allowbreak+\allowbreak
ad\allowbreak+\allowbreak bc\allowbreak+\allowbreak bd\allowbreak+\allowbreak
cd)U_{1}(x)\allowbreak\\
-\allowbreak(abc+abd+acd+bcd)U_{0}(x)\allowbreak+\allowbreak abcdU_{-1}(x).
\end{gather*}

Hence, taking into account the observations from the beginning of this remark,
we deduce, that for $n\geq2$ we have%
\begin{gather*}
\alpha_{n}(x|a,b,c,d,0)=U_{n}(x)-(a+b+c+d)U_{n-1}(x)\allowbreak+\allowbreak\\
(ab\allowbreak+\allowbreak ac\allowbreak+\allowbreak ad\allowbreak+\allowbreak
bc\allowbreak+\allowbreak bd\allowbreak+\allowbreak cd)U_{n-2}(x)\allowbreak\\
-\allowbreak(abc+abd+acd+bcd)U_{n-3}(x)\allowbreak+\allowbreak abcdU_{n-4}(x).
\end{gather*}

\end{remark}

\section{Remaining formulae for connection coefficients and other useful
finite or infinite expansions including bivariate ones.\label{rem}}

This section is organized in such a way that the reference to a particular
family of polynomials will be exposed by the reference to its name. So, for
example, the connection coefficients between different families of Chebyshev
polynomials will be preceded by the heading T\&U.

\subsection{Connection coefficients}

\subsubsection{T\&U}

We have for $n\geq0$%

\begin{align}
T_{n}\left(  x\right)  \allowbreak &  =\allowbreak\left(  U_{n}\left(
x\right)  -U_{n-2}\left(  x\right)  \right)  /2,\label{TnaU}\\
U_{n}\left(  x\right)   &  =2\sum_{k=0}^{\left\lfloor n/2\right\rfloor
}T_{n-2k}\left(  x\right)  \allowbreak-\allowbreak\left(  1+\left(  -1\right)
^{n}\right)  /2. \label{UnaT}%
\end{align}

These expansions belong to common knowledge of the theory of special functions.

\subsubsection{h\&T}

Taking into account that (\ref{cH}) is equivalent to
\[
h_{n}\left(  x\right)  \allowbreak=\allowbreak\sum_{k=0}^{n}%
%TCIMACRO{\QATOPD{[}{]}{n}{k}}%
%BeginExpansion
\genfrac{[}{]}{0pt}{}{n}{k}%
%EndExpansion
_{q}\cos\left(  2k-n\right)  \theta,
\]
where $x=\cos\theta$ and (\ref{repCh}), we arrive at the following nice formula:%

\[
h_{n}\left(  x|q\right)  \allowbreak=\allowbreak\sum_{k=0}^{n}%
%TCIMACRO{\QATOPD{[}{]}{n}{k}}%
%BeginExpansion
\genfrac{[}{]}{0pt}{}{n}{k}%
%EndExpansion
_{q}T_{n-2k}\left(  x\right)  ,
\]
if one sets $T_{-n}\left(  x\right)  \allowbreak=\allowbreak T_{n}\left(
x\right)  ,$ $n\geq0$. On the other hand, taking into account (\ref{TnaU}) and
(\ref{Unah}), we get%
\[
T_{n}(x)=\frac{1}{2}\sum_{k=0}^{\left\lfloor n/2\right\rfloor }(-1)^{k}%
q^{\binom{k}{2}}(q^{k}%
%TCIMACRO{\QATOPD{[}{]}{n-k}{k}}%
%BeginExpansion
\genfrac{[}{]}{0pt}{}{n-k}{k}%
%EndExpansion
_{q}+%
%TCIMACRO{\QATOPD{[}{]}{n-k-1}{k-1}}%
%BeginExpansion
\genfrac{[}{]}{0pt}{}{n-k-1}{k-1}%
%EndExpansion
_{q})h_{n-2k}(x|q).
\]

\subsubsection{h\&h}

See formula (\ref{chbase}).

\subsubsection{b\&h}

See formula (\ref{bnah}).

\subsubsection{bh\&h}

See formula (\ref{bigh}) and (\ref{hnabh}).

\subsubsection{$q^{-1}$bh\&b}

See formula (\ref{Id1}).

\subsubsection{Q\&h}

\begin{proposition}%
\begin{gather*}
Q_{n}(x|a,b,q)=\sum_{k=0}^{\left\lfloor n/2\right\rfloor }%
%TCIMACRO{\QATOPD{[}{]}{n}{k}}%
%BeginExpansion
\genfrac{[}{]}{0pt}{}{n}{k}%
%EndExpansion
_{q}%
%TCIMACRO{\QATOPD{[}{]}{n-k}{k}}%
%BeginExpansion
\genfrac{[}{]}{0pt}{}{n-k}{k}%
%EndExpansion
_{q}(q)_{k}q^{k(k-1)}(ab)^{k}\times\\
\sum_{s=0}^{n-2k}(-q^{k})^{s}q^{\binom{s}{2}}%
%TCIMACRO{\QATOPD{[}{]}{n-2k}{s}}%
%BeginExpansion
\genfrac{[}{]}{0pt}{}{n-2k}{s}%
%EndExpansion
_{q}(ab)^{s/2}h_{n-2k-s}\left(  x|q\right)  h_{s}\left(  \frac{a+b}%
{2(ab)^{1/2}}|q\right)  .
\end{gather*}

\end{proposition}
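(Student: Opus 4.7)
The plan is to combine two formulae already established in the paper. I start from the expansion (\ref{Qbh}) of $Q_n(x|a,b,q)$ in terms of the products $b_{n-m}(y|q)h_m(x|q)$, where $y = (a+b)/(2\sqrt{ab})$, and the expansion (\ref{bnah}) of $b_\ell(y|q)$ in the $q$-Hermite basis $\{h_{\ell-2j}(y|q)\}_j$. After the change of outer index $\ell = n - m$, (\ref{Qbh}) becomes a single sum over $\ell$ of $(ab)^{\ell/2} b_\ell(y|q) h_{n-\ell}(x|q)$. Substituting (\ref{bnah}) for $b_\ell(y|q)$ yields a double sum indexed by $\ell$ and the inner summation index $j$, in which the $y$-dependent factor is $h_{\ell-2j}(y|q)$ and the $x$-dependent factor is $h_{n-\ell}(x|q)$.

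Next I reindex by setting $k = j$ and $s = \ell - 2j$, so $\ell = s + 2k$ and the summand contains exactly $h_{n-2k-s}(x|q) h_s(y|q)$, matching the factors on the right-hand side of the claim. The constraints $0 \le j \le \lfloor \ell/2 \rfloor$ and $\ell \le n$ translate into $0 \le k \le \lfloor n/2\rfloor$ and $0 \le s \le n - 2k$, the exact ranges asserted in the statement. The central combinatorial step is then to collapse the three $q$-binomial coefficients produced by this reindexing into the two products indicated in the claim, via the identity
\[
\QATOPD[ ]{n}{s+2k}_q \QATOPD[ ]{s+2k}{k}_q \QATOPD[ ]{s+k}{k}_q = \QATOPD[ ]{n}{k}_q \QATOPD[ ]{n-k}{k}_q \QATOPD[ ]{n-2k}{s}_q,
\]
which is immediate once each side is written in terms of $q$-Pochhammer symbols, both reducing to $(q)_n / \big((q)_k^2 (q)_s (q)_{n-2k-s}\big)$.

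After this factorisation, the quantities depending only on $k$ pull out of the inner sum, yielding the double-sum structure of the statement. The remaining work is purely computational: gather the factors $(ab)^{(s+2k)/2} = (ab)^k (ab)^{s/2}$, the sign $(-1)^{s+2k}$, and the $q$-powers $q^{\binom{s+2k}{2}}$ and $q^{j(j-\ell)} = q^{-k^2 - ks}$ coming from (\ref{bnah}), and then apply the elementary identity
\[
\binom{s+2k}{2} - k^2 - ks = \binom{s}{2} + ks + k(k-1)
\]
to bring the $q$-exponents to the form appearing in the statement. I expect the main obstacle to be nothing deeper than careful bookkeeping of the signs and powers of $q$ through the reindexing step; once the $q$-binomial identity above is established and the exponent arithmetic is verified, the claimed formula drops out immediately. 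No result beyond the two input expansions (\ref{Qbh}) and (\ref{bnah}) is needed.
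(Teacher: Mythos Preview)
Your approach is exactly the one taken in the paper: start from (\ref{Qbh}), substitute (\ref{bnah}) for the $b$-polynomial, and reindex (the paper phrases the last step simply as ``change the order of summation''). Your write-up is in fact more detailed than the paper's own proof, supplying the explicit $q$-binomial identity and the exponent bookkeeping that the paper leaves implicit. One remark: your computation correctly produces the inner factor $(-1)^{s}q^{ks}$, whereas the statement as printed has $(-q)^{sk}$; these disagree when $s$ is odd and $k$ is even, so the printed formula carries a typo and your derivation actually yields the correct version.
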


\begin{proof}
First we use (\ref{Qbh}) and then (\ref{bnah}) obtaining
\begin{gather*}
Q_{n}(x|a,b,q)=\sum_{k=0}^{n}%
%TCIMACRO{\QATOPD{[}{]}{n}{k}}%
%BeginExpansion
\genfrac{[}{]}{0pt}{}{n}{k}%
%EndExpansion
_{q}(ab)^{k}h_{n-k}(x|q)\left(  -1\right)  ^{k}q^{\binom{k}{2}}\times\\
\sum_{s=0}^{\left\lfloor k/2\right\rfloor }%
%TCIMACRO{\QATOPD{[}{]}{k}{s}}%
%BeginExpansion
\genfrac{[}{]}{0pt}{}{k}{s}%
%EndExpansion
_{q}%
%TCIMACRO{\QATOPD{[}{]}{k-s}{s}}%
%BeginExpansion
\genfrac{[}{]}{0pt}{}{k-s}{s}%
%EndExpansion
_{q}(q)_{s}q^{s(s-k)}h_{k-2s}\left(  \frac{a+b}{2(ab)^{1/2}}|q\right)  .
\end{gather*}
Now we change the order of summation.
\end{proof}

\subsubsection{C\&C}

See formula (\ref{CnaC}).

\subsubsection{Q\&Q}

See formulae (\ref{abcd}) and (\ref{odwr}).

\subsection{Linearization formulae}

These are the formulae expressing a product of two or more polynomials of the
same type as linear combinations of polynomials of the same type as the ones
produced. We will extend the name 'linearization formulae' by relaxing the
requirement of the polynomials involved to be of the same type. Generally,
obtaining 'linearization formula ' is not simple and requires a lot of tedious calculations.

\subsubsection{h\&h}

See formulae (\ref{lihh}) and (\ref{linhhh}).

\subsubsection{h\&b}

See formula (\ref{linbh}).

\subsubsection{h\&C}

We have also the useful formula:

$\forall n,m\geq1:$%
\begin{gather}
(q)_{n}h_{m}\left(  x|q\right)  C_{n}\left(  x|\beta,q\right)  =\sum
_{\substack{k,j\geq0 \\k+j\leq(n+m)/2}}\left(  -\beta\right)  ^{k}%
%TCIMACRO{\QATOPD{[}{]}{m}{j}}%
%BeginExpansion
\genfrac{[}{]}{0pt}{}{m}{j}%
%EndExpansion
_{q}%
%TCIMACRO{\QATOPD{[}{]}{n}{k+j}}%
%BeginExpansion
\genfrac{[}{]}{0pt}{}{n}{k+j}%
%EndExpansion
_{q}\label{HRnaH}\\
\times%
%TCIMACRO{\QATOPD{[}{]}{n-k-j}{k}}%
%BeginExpansion
\genfrac{[}{]}{0pt}{}{n-k-j}{k}%
%EndExpansion
(q)_{k+j}q^{\binom{k}{2}}\left(  \beta\right)  _{n-k}h_{n+m-2k-2j}\left(
x|q\right)  ,\nonumber
\end{gather}
which was proved in \cite{ALIs88} (1.9).

\subsubsection{Q\&Q}

For the sake completeness let us mention that in \cite{Stanton08} there is a
very complicated linearization formula for Al-Salam--Chihara polynomials given
in Theorem 1.

\subsection{Useful finite sums and identities}

We have also a very useful generalization of the formula (1.12) of \cite{bms}
which was proved in \cite{Szab6} (Lemma2 assertion i)).

Let us remark that for $q\allowbreak=\allowbreak0$, (\ref{suma BH}) reduces to
three-term recurrence of polynomials $U_{n}\left(  x/2\right)  $.

Let us return to the modified version of ASC polynomials. More precisely, to
the polynomials defined by (\ref{pn}), i.e.,let us define:%
\[
p_{n}(x|y,\rho,q)=Q_{n}(x|\rho(y-\sqrt{y^{2}-1}),\rho(y+\sqrt{y^{2}-1}),q),
\]
remembering that polynomials $Q_{n}$ depend on the sum and product of
parameters $a$ and $b$.

Recently in \cite{Szab-bAW} the following identities involving ASC polynomials
$p_{n}$ were given:

i) $\forall n\geq1,0\leq k<n,z,y,t\in\mathbb{R}:$%
\[
\sum_{j=0}^{n-k}%
%TCIMACRO{\QATOPD{[}{]}{n-k}{j}}%
%BeginExpansion
\genfrac{[}{]}{0pt}{}{n-k}{j}%
%EndExpansion
_{q}\frac{p_{j}\left(  z|y,tq^{k},q\right)  }{\left(  t^{2}q^{2k}\right)
_{j}}\frac{g_{n-k-j}\left(  z|y,tq^{n-1},q\right)  }{\left(  t^{2}%
q^{n+j+k-1}\right)  _{n-k-j}}\allowbreak=\allowbreak0,
\]

ii) $\forall n\geq1,0\leq k<n,z,y,t\in\mathbb{R}:$%
\[
\sum_{m=0}^{n-k}%
%TCIMACRO{\QATOPD{[}{]}{n-k}{m}}%
%BeginExpansion
\genfrac{[}{]}{0pt}{}{n-k}{m}%
%EndExpansion
_{q}\frac{p_{n-k-m}\left(  z|y,tq^{m+k},q\right)  g_{m}(z|y,tq^{m+k-1}%
,q)}{\left(  t^{2}q^{2m+2k}\right)  _{n-k-m}\left(  t^{2}q^{m+2k-1}\right)
_{m}}\allowbreak=\allowbreak0,
\]
where polynomials $g_{n}$ are somewhat analogous to the polynomials $b_{n}$
and are defined by the formula:%
\begin{equation}
g_{n}\left(  x|y,\rho,q\right)  \allowbreak=\allowbreak\left\{
\begin{array}
[c]{ccc}%
\rho^{n}p_{n}\left(  y|x,\rho^{-1},q\right)  & \text{if} & \rho\neq0\\
b_{n}\left(  x|q\right)  & \text{if} & \rho=0
\end{array}
\right.  . \label{_g}%
\end{equation}
One showed also there, that for $n\geq-1:$
\[
g_{n}(x|y,\rho,q)\allowbreak=\allowbreak\hat{Q}_{n}(x|\rho(y-\sqrt{y^{2}%
-1}),\rho(y+\sqrt{y^{2}-1}),q),
\]
where polynomials $\hat{Q}_{n}$ are $q^{-1}-$ASC polynomials that are defined
in (\ref{odwrQ}).

Recall in this context, that we have also (\ref{ident}) and (\ref{ofdrgen}).

Exploring Carlitz paper \cite{Carlitz72} and confronting it with the above
Lemma \ref{UogCarl}, below we arrive at the following conversion Lemma.

\begin{lemma}
$\forall n,m\geq0,$ $\left\vert t\right\vert <1,$ $\theta\in(-\pi,\pi]:$
\begin{align}
&  \sum_{k=0}^{m}\sum_{l=0}^{n}%
%TCIMACRO{\QATOPD{[}{]}{m}{k}}%
%BeginExpansion
\genfrac{[}{]}{0pt}{}{m}{k}%
%EndExpansion
_{q}%
%TCIMACRO{\QATOPD{[}{]}{n}{l}}%
%BeginExpansion
\genfrac{[}{]}{0pt}{}{n}{l}%
%EndExpansion
_{q}\frac{\left(  te^{i\left(  -\theta+\eta\right)  }\right)  _{k}\left(
te^{i\left(  \theta-\eta\right)  }\right)  _{l}\left(  te^{-i\left(
\theta+\eta\right)  }\right)  _{k+l}}{\left(  t^{2}\right)  _{k+l}%
}e^{-i\left(  m-2k\right)  \theta}e^{-i(n-2l)\eta}\label{upr_Car}\\
&  =\sum_{j=0}^{n}(-1)^{j}q^{\binom{j}{2}}%
%TCIMACRO{\QATOPD{[}{]}{n}{j}}%
%BeginExpansion
\genfrac{[}{]}{0pt}{}{n}{j}%
%EndExpansion
_{q}t^{j}h_{n-j}(y|q)p_{m+j}\left(  x|y,t,q\right)  /\left(  t^{2}\right)
_{j+m},\nonumber
\end{align}
where $p_{n}(x|y,t,q)\allowbreak=\allowbreak Q_{n}(x|t(y+(y^{2}-1)^{1/2}%
),t(y-(y^{2}-1)^{1/2}),q)$ and $x\allowbreak=\allowbreak\cos\theta$ and
$y\allowbreak=\allowbreak\cos\eta$.
\end{lemma}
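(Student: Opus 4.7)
The plan is to pass through the preceding Lemma~\ref{UogCarl}, which is the $q$-analytic core of the identity, and then re-package the output in terms of the ASC polynomials $p_{n}$ and the $q$-Hermite polynomials $h_{n}$. First, I would make the substitution $a=te^{-i\eta}$ and $b=te^{i\eta}$, under which $ab=t^{2}$ and $(a+b)/2=ty$. With these parameters, $p_{m+j}(x|y,t,q)=Q_{m+j}(x|a,b,q)$ by definition, and the three $q$-Pochhammer factors in the LHS become the familiar building blocks $(be^{-i\theta})_{k}(ae^{i\theta})_{l}(ae^{-i\theta})_{k+l}/(ab)_{k+l}$ that appear in the Askey--Wilson/ASC series representations. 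The exponential factors $e^{-i(m-2k)\theta}e^{-i(n-2l)\eta}$ are precisely the monomials that appear when one writes $Q_{m+j}(\cos\theta|a,b,q)$ and $h_{n-j}(\cos\eta|q)$ via their $e^{\pm i\theta}$, $e^{\pm i\eta}$ expansions.

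Next, I would invoke Lemma~\ref{UogCarl} (which generalizes the Carlitz identity in \cite{Carlitz72}) applied to the rewritten LHS. The role of that lemma is to collapse the double sum $\sum_{k}\sum_{l}$ into a single sum, producing on the right a weighted combination of ASC-type polynomials in $x$ multiplied by a residual factor that depends only on $\eta$ (equivalently $y$) and $t$. The mechanism is a $q$-Vandermonde-type cancellation internal to the Pochhammer bookkeeping; this is where the sign $(-1)^{j}$ and the $q^{\binom{j}{2}}$ factor on the RHS are generated (via formula (\ref{obinT})).

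Third, I would identify the residual $\eta$-dependent factor. Using the exponential representation $h_{n-j}(y|q)=e^{i(n-j)\eta}s_{n-j}(e^{-2i\eta}|q)$ from (\ref{cH}), together with the definition of $s_{n}$ via $q$-binomial coefficients, the residual factor matches $t^{j}h_{n-j}(y|q)$; simultaneously $Q_{m+j}(x|a,b,q)$ is renamed as $p_{m+j}(x|y,t,q)$, and the denominator $(t^{2})_{j+m}=(ab)_{j+m}$ is already present from the original Pochhammer ratio. Reindexing in $j$ yields the RHS.

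The main obstacle will be the combinatorial bookkeeping in the collapse step: one has to track precisely how the three Pochhammer factors on the LHS split into (i) a factor that assembles into $(t^{2})_{m+j}^{-1}\cdot p_{m+j}(x|y,t,q)$ via the ASC series representation, and (ii) a factor that reassembles into $h_{n-j}(y|q)$ via (\ref{cH}). The coefficient $(-1)^{j}q^{\binom{j}{2}}t^{j}\QATOPD[ ] {n}{j}_{q}$ must emerge from the $q$-Vandermonde/Euler identity (\ref{obinT}) applied to the cross-terms that mix the $l$-index with the $j$-index. Once this bookkeeping is correct, the identification with the RHS is purely formal, so the whole argument reduces to a careful but mechanical reindexing after invoking Lemma~\ref{UogCarl}.
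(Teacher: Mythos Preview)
The paper's own proof is simply a citation to \cite{Szab-bAW}, Proposition~6. The sentence preceding the lemma gives the actual mechanism: the identity is obtained by ``confronting'' Carlitz's result \cite{Carlitz72} with Lemma~\ref{UogCarl}. Concretely, both sides of (\ref{upr_Car}) are two different closed forms for the \emph{same} polynomial $\Xi_{m,n}(x,y|t,q)$ (after un-rescaling from $H_n,P_n,\rho$ to $h_n,p_n,t$): Carlitz's computation produces the double sum on the LHS, while Lemma~\ref{UogCarl}(i) produces the single sum on the RHS. The proof is a juxtaposition of two independent evaluations of $\Xi_{m,n}$, not a direct algebraic passage from one side to the other.

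Your route is different and, as written, has a genuine gap. You propose to transform the LHS into the RHS by algebraic manipulation, invoking Lemma~\ref{UogCarl} as the engine that ``collapses the double sum $\sum_k\sum_l$ into a single sum''. That is a misreading of Lemma~\ref{UogCarl}: it does not take a finite double sum as input. It evaluates the ratio $\gamma_{m,n}/\gamma_{0,0}$ of two \emph{infinite} series and outputs the single-sum formula in part (i). There is no hypothesis of Lemma~\ref{UogCarl} into which the rewritten LHS can be fed, so the step ``invoke Lemma~\ref{UogCarl} applied to the rewritten LHS'' does not type-check. The $q$-Vandermonde/Euler bookkeeping you sketch would have to stand on its own, and you have not indicated how the three Pochhammer factors $(te^{i(-\theta+\eta)})_k(te^{i(\theta-\eta)})_l(te^{-i(\theta+\eta)})_{k+l}/(t^2)_{k+l}$ actually decouple into an ASC piece in $x$ and an $h$-piece in $y$; that is precisely the nontrivial content of Carlitz's paper.

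What does work, and what the paper intends, is: (a) recognize the LHS as Carlitz's expression for $\Xi_{m,n}$ --- this is where \cite{Carlitz72} enters, after your (correct) substitution $a=te^{-i\eta}$, $b=te^{i\eta}$; (b) read off the RHS from Lemma~\ref{UogCarl}(i), translating $H_{k-s}\to h_{n-j}$, $P_{m+s}\to p_{m+j}$, $\rho\to t$. Your parameter identification and the observation $p_{m+j}(x|y,t,q)=Q_{m+j}(x|a,b,q)$ are fine; the missing piece is step (a), which must be imported from Carlitz rather than manufactured by reindexing.
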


\begin{proof}
See \cite{Szab-bAW} Proposition 6.
\end{proof}

\subsection{Useful bivariate identities including infinite ones.}

The most important in the context of $q-$series theory is undoubtedly the
so-called Poisson--Mehler identity. It that can be obtained from
(\ref{mehler}) and (\ref{n_form}) when the new parameters $y$ and $\rho$ are
introduced. Recall that these parameters are expressed in terms of $a$ and $b$
in the following form $a+b\allowbreak=\allowbreak2\rho y$ and $ab\allowbreak
=\allowbreak\rho^{2}$. Then $\frac{(a+b)}{2(ab)^{1/2}}\allowbreak=\allowbreak
y$ so we have%
\[
\sum_{n\geq0}\frac{\rho^{n}}{(q)_{n}}h_{n}(x|q)h_{n}(y|q)\allowbreak
=\allowbreak\frac{(\rho^{2})_{\infty}}{\prod_{j=0}^{\infty}w(x,y|\rho q^{j}%
)},
\]
where by $w(x,y|r)$ is given by (\ref{wxya}). Let us denote for simplicity
\begin{equation}
V(x,y|\rho,q)=\prod_{j=0}^{\infty}w(x,y|\rho q^{j}). \label{VV}%
\end{equation}
Notice that the right-hand side of the above mentioned equality is nonnegative
for $\max(\left\vert x\right\vert ,\left\vert y\right\vert ,\left\vert
\rho\right\vert ,\left\vert q\right\vert )<1$ and also that
\[
\int_{-1}^{1}\left(  \sum_{n\geq0}\frac{\rho^{n}}{(q)_{n}}h_{n}(x|q)h_{n}%
(y|q)\right)  f_{h}(x|q)dx\allowbreak=\allowbreak1;
\]
hence the following function
\[
f_{h}(x)\frac{(\rho^{2})_{\infty}}{V(x,y|\rho,q)}%
\]
is the density.

The polynomials, that are orthogonal with respect to this density, are, in
fact, ASC polynomials $Q_{n}$ considered for specific values of parameters.
They were already introduced and named $p_{n}(x|y,\rho,q)$. A generalization
of this formula and its probabilistic applications are presented in the next
section in particular in Lemma \ref{UogCarl}.

There are other important formulae involving polynomials $\left\{
h_{n}\right\}  $ and $\left\{  b_{n}\right\}  $ considered with different
arguments. Here we will mention only the finite ones.

Let us start with the formula that has been proved by Ismail and Stanton in
\cite{IsStan200}
\begin{align}
&  \sum_{k=0}^{n}%
%TCIMACRO{\QATOPD{[}{]}{n}{k}}%
%BeginExpansion
\genfrac{[}{]}{0pt}{}{n}{k}%
%EndExpansion
_{q}q^{-k\left(  n-k\right)  /2}h_{k}\left(  x|q\right)  h_{n-k}\left(
y|\frac{1}{q}\right) \label{IsmStan}\\
&  =e^{-in\phi}\left(  -q^{\left(  1-n\right)  /2}e^{i\left(  \theta
+\phi\right)  },-q^{\left(  1-n\right)  /2}e^{i\left(  -\theta+\phi\right)
}|q\right)  _{n},\nonumber
\end{align}
where $x=\cos\theta$ and $y=\cos\phi,$ that later was simplified by
Szab\l owski in \cite{Szab08} to the following form%
\begin{equation}
\sum_{k=0}^{n}%
%TCIMACRO{\QATOPD{[}{]}{n}{k}}%
%BeginExpansion
\genfrac{[}{]}{0pt}{}{n}{k}%
%EndExpansion
_{q}q^{-k\left(  n-k\right)  /2}h_{k}\left(  x|q\right)  h_{n-k}\left(
y|\frac{1}{q}\right)  =2^{n}\allowbreak\left\{
\begin{array}
[c]{ccc}%
\prod_{j=0}^{k-1}t_{2j+1}\left(  x,y|q\right)  & \text{if} & n=2k\\
\prod_{j=0}^{k}t_{2j}\left(  x,y|q\right)  & \text{if} & n=2k+1
\end{array}
\right.  \text{,} \label{simIsm}%
\end{equation}
where
\[
t_{n}\left(  x,y|q\right)  \allowbreak=\allowbreak x^{2}+y^{2}\allowbreak
+\allowbreak xy\left(  q^{n/2}+q^{-n/2}\right)  \allowbreak+\allowbreak\left(
q^{n}+q^{-n}-2\right)  /4,
\]
for $n\geq1$ and $t_{0}\left(  x,y|q\right)  \allowbreak=\allowbreak x+y$.

Recently in \cite{SzabCheb} the following formulae have been proved
\begin{align}
d_{n}^{(2)}(\cos\theta,\cos\varphi|q)\allowbreak &  =\allowbreak\sum_{m=0}^{n}%
%TCIMACRO{\QATOPD{[}{]}{n}{m}}%
%BeginExpansion
\genfrac{[}{]}{0pt}{}{n}{m}%
%EndExpansion
_{q}b_{m}(\cos(\theta+\varphi)|q)b_{n-m}(\cos(\theta-\varphi)|q),\label{d2b}\\
f_{n}^{(2)}(\cos\theta,\cos\varphi|q)  &  =\sum_{m=0}^{n}%
%TCIMACRO{\QATOPD{[}{]}{n}{m}}%
%BeginExpansion
\genfrac{[}{]}{0pt}{}{n}{m}%
%EndExpansion
_{q}h_{m}(\cos(\theta+\varphi)|q)h_{n-m}(\cos(\theta-\varphi)|q), \label{d2h}%
\end{align}
where $x\allowbreak=\allowbreak\cos\theta$ and $y\allowbreak=\allowbreak
\cos\varphi$ and
\begin{gather}
d_{n}^{(2)}(x,y|q)=\label{con1}\\
(-1)^{n}\sum_{j=0}^{\left\lfloor n/2\right\rfloor }(-1)^{j}q^{-\binom{n-2j}%
{2}-j+\binom{j}{2}}\frac{(q)_{n}}{(q)_{j}(q)_{n-2j}}b_{n-2j}(x|q)b_{n-2j}%
(y|q),\nonumber\\
f_{n}^{(2)}(x,y|q)\allowbreak=\allowbreak\sum_{j=0}^{\left\lfloor
n/2\right\rfloor }\frac{(q)_{n}}{(q)_{j}(q)_{n-2j}}h_{n-2j}(x|q)h_{n-2j}(y|q).
\label{con2}%
\end{gather}

The r\^{o}le of the functions $d_{n}^{(2)}$ and $f_{n}^{(2)}$ can be seen in
the following relationships:%

\[
\frac{n!}{(q)_{n}}d_{n}^{(2)}(x,y|q)\allowbreak=\allowbreak\left.  \frac
{d^{n}}{d\rho^{n}}V(x,y|\rho,q)\right\vert _{\rho=0},
\]
hence
\[
V(x,y|\rho,q)=\sum_{n\geq0}\frac{\rho^{n}}{(q)_{n}}d_{n}^{(2)}(x,y|q),
\]
and
\[
\frac{n!}{(q)_{n}}f_{n}^{(2)}(x,y|q)\allowbreak=\allowbreak\left.  \frac
{d^{n}}{d\rho^{n}}V^{-1}(x,y|\rho,q)\right\vert _{\rho=0},
\]
hence
\[
V^{-1}(x,y|\rho,q)=\sum_{n\geq0}\frac{\rho^{n}}{(q)_{n}}f_{n}^{(2)}(x,y|q),
\]
where $V$ is given above by (\ref{VV}). We have also for free the following
identity:%
\[
\sum_{j=0}^{n}%
%TCIMACRO{\QATOPD{[}{]}{n}{j}}%
%BeginExpansion
\genfrac{[}{]}{0pt}{}{n}{j}%
%EndExpansion
_{q}f_{j}^{(2)}(x,y|q)d_{n-j}^{(2)}(x,y|q)\allowbreak=\allowbreak0,
\]
for $n\geq1,$ $x,y,q\in\mathbb{C}$. Of course, this identity was proved for
$\max(\left\vert x\right\vert ,\left\vert y\right\vert ,\left\vert
q\right\vert )<1$, but since both $f_{n}^{\prime}s$ and $d_{n}^{\prime}s$ are
polynomials in $x$, $y$ and $q$, the identity can be extended for all values
complex of unknowns.

It was proved recently in \cite{SzabCheb} that we also have%

\begin{equation}
\sum_{m=0}^{k}%
%TCIMACRO{\QATOPD{[}{]}{k}{m}}%
%BeginExpansion
\genfrac{[}{]}{0pt}{}{k}{m}%
%EndExpansion
_{q}d_{m}^{(2)}(x,y|q)h_{k-m}(x|q)h_{k-m}(y|q)\allowbreak=\left\{
\begin{array}
[c]{ccc}%
0 & \text{if} & k\text{ is odd}\\
(-1)^{l}q^{\binom{l}{2}}(q^{l+1})_{l} & \text{if} & k=2l
\end{array}
\right.  . \label{00k}%
\end{equation}

\part{Askey-Wilson scheme compactly supported on $[-2/\sqrt{1-q},2/\sqrt
{1-q}]$. Probabilistic interpretation \label{prob}}

This part is devoted to applications of all the mentioned above families of
polynomials that appear in the theory of probability and the theory of
stochastic processes, more precisely the Markov processes. Namely, we will
define multidimensional distributions and (or) Markov processes, whose
marginal or conditional distributions are the distributions that make
orthogonal all or some of the families of orthogonal polynomials that were
mentioned in the previous sections of the paper. It turns out that these
applications become more noticeable when the parameters defining the
above-mentioned families of polynomials are redefined and moreover, the
support of measures, that made these families of polynomials orthogonal, is
made dependent on the parameter $q$. Namely, in this section, all measures
considered will be supported on the segment
\[
S(q)=[-2/\sqrt{1-q},2/\sqrt{1-q}].
\]
Notice that when $q\rightarrow1^{-}$, $S(q)$ tends to the real line
$\mathbb{R}$.

Consequently, the densities of the considered measures and the families of
polynomials, have to be redefined. We start with the $q$-Hermite polynomials.
Now we will consider the polynomials
\begin{equation}
H_{n}\left(  x|q\right)  \allowbreak=\allowbreak(1-q)^{-n/2}h_{n}\left(
\frac{x\sqrt{1-q}}{2}|q\right)  , \label{qH}%
\end{equation}
where polynomials $h_{n}$ are the $q-$Hermite polynomials considered above.
They satisfy the following three-term recurrence:%
\begin{equation}
xH_{n}\left(  x|q\right)  =H_{n+1}\left(  x|q\right)  \allowbreak+\left[
n\right]  _{q}H_{n-1}\left(  x\right)  , \label{3trH}%
\end{equation}
for $n\geq1$ with $H_{-1}\left(  x|q\right)  \allowbreak=\allowbreak0$,
$H_{1}\left(  x|q\right)  \allowbreak=\allowbreak1$. Notice that now
polynomials $H_{n}$ are monic and also that
\[
\lim_{q\rightarrow1^{-}}H_{n}(x|q)=H_{n}(x),
\]
since $[n]_{1}\allowbreak=\allowbreak n$. The density of the probability
measure, that makes these polynomials orthogonal, is now given by%
\[
f_{N}\left(  x|q\right)  \allowbreak=\allowbreak\left\{
\begin{array}
[c]{ccc}%
\sqrt{1-q}f_{h}(x\sqrt{1-q}/2|q)/2 & \text{if} & \left\vert q\right\vert <1\\
\exp\left(  -x^{2}/2\right)  /\sqrt{2\pi} & \text{if} & q=1
\end{array}
\right.  .
\]

For the sake of completeness, let us define also polynomials $\left\{
B_{n}\left(  x|q\right)  \right\}  _{n\geq-1}$ by the following formula
(compare \cite{bms}):%
\[
B_{n}(x|q)=\left\{
\begin{array}
[c]{ll}%
i^{n}q^{n(n-2)/2}H_{n}(i\sqrt{q}\,x|q^{-1}) & \text{for }1\geq q>0\\
(-1)^{\binom{n}{2}}|q|^{n(n-2)/2}H_{n}(-\sqrt{|q|}\,x|q^{-1}) & \text{for
}-1<q<0
\end{array}
\right.  ,
\]
and satisfying the following three-term recurrence:%
\begin{equation}
B_{n+1}\left(  y|q\right)  \allowbreak=\allowbreak-q^{n}yB_{n}\left(
y|q\right)  +q^{n-1}\left[  n\right]  _{q}B_{n-1}\left(  y|q\right)  .
\label{Be}%
\end{equation}
In fact, polynomials $B_{n}$ can be also defined by
\[
B_{n}(x|q)=(1-q)^{-n/2}b_{n}\left(  \frac{\sqrt{1-q}}{2}x|q\right)  ,
\]
where $b_{n}$ is given by (\ref{bbb}).

The important identity (\ref{simpbh}) now (that is with newly defined
polynomials $H_{n}$ and $B_{n}$) takes the following form. For all $n\geq0:$%
\begin{equation}
\sum_{k=0}^{n}%
%TCIMACRO{\QATOPD{[}{]}{n}{k}}%
%BeginExpansion
\genfrac{[}{]}{0pt}{}{n}{k}%
%EndExpansion
_{q}B_{n-k}\left(  x|q\right)  H_{k+m}\left(  x|q\right)  \allowbreak=\left\{
\begin{array}
[c]{ccc}%
0 & \text{if} & n>m\\
(-1)^{n}q^{\binom{n}{2}}\frac{\left[  m\right]  _{q}!}{\left[  m-n\right]
_{q}!}H_{m-n}\left(  x|q\right)  & \text{if} & m\geq n
\end{array}
\right.  . \label{suma BH}%
\end{equation}

Notice that due to the fact that the support depends on the parameter $q,$ we
are able to include the case $q\rightarrow1^{-}$. The property that $\left[
n\right]  _{1}\allowbreak=\allowbreak n$ shows that $H_{n}(x|1)\allowbreak
=\allowbreak H_{n}(x)$ hence; it suggests, that the measure defined by the
density $f_{N}$ is a kind of generalization of the Normal or Gaussian measure.

Indeed, the fact that $f_{N}(x|q)\rightarrow\exp\left(  -x^{2}/2\right)
/\sqrt{2\pi}$ as $q\rightarrow1^{-}$ was established by Ismail years ago in
\cite{ISV87}. Thus we could have defined density $f_{N}$ for $q\allowbreak
=\allowbreak1$. We will call the measure with this density $q-$Normal or
$q-$Gaussian. Similarly, for $\forall n\geq1$ we have
\[
B_{n}(x|q)\rightarrow i^{n}H_{n}\left(  ix\right)  \text{,}%
\]
as $q\rightarrow1^{-1}$.

Let us also remark that for $\forall n\geq-1$
\[
H_{n}(x|0)\allowbreak=\allowbreak U_{n}(x/2).
\]

Notice that for $q\allowbreak=\allowbreak1$ we get from (\ref{suma BH}) the
following identity true for all nonnegative integers $n,m$
\[
\sum_{k=0}^{n}\binom{n}{k}i^{n-k}H_{n-k}\left(  ix\right)  H_{k+m}\left(
x\right)  \allowbreak=\allowbreak\left\{
\begin{array}
[c]{ccc}%
0 & \text{if} & n>m\\
(-1)^{n}\frac{m!}{(m-n)!}H_{m-n}\left(  x\right)  & \text{if} & m\geq n
\end{array}
\right.  .
\]

Let us also define the modified $q$-ultraspherical polynomials. Namely, we
will consider now polynomials $R_{n}\left(  x|\beta,q\right)  $ related to the
polynomials $C_{n}$ through the relationship:
\begin{equation}
C_{n}\left(  x|\beta,q\right)  \allowbreak=\allowbreak\left(  1-q\right)
^{n/2}R_{n}\left(  \frac{2x}{\sqrt{1-q}}|\beta,q\right)  /\left(  q\right)
_{n}, \label{q-US}%
\end{equation}
for $n\geq1$. It is not difficult to notice, that polynomials $R_{n}$ satisfy
the following three-term recurrence:%
\begin{equation}
\left(  1-\beta q^{n}\right)  xR_{n}\left(  x|\beta,q\right)  =R_{n+1}\left(
x|\beta,q\right)  +\left(  1-\beta^{2}q^{n-1}\right)  \left[  n\right]
_{q}R_{n-1}\left(  x|\beta,q\right)  \text{.} \label{R}%
\end{equation}

The density of the measure that makes polynomials $R_{n}$ orthogonal, is now
given by
\begin{equation}
f_{R}\left(  x|\beta,q\right)  =\sqrt{1-q}f_{C}(x\sqrt{1-q}/2|q)/2, \label{fR}%
\end{equation}
where $f_{C}$ is given by (\ref{fC})

Below, we analyze extreme or particular cases and we have the following observations:

\begin{proposition}
For $n\geq0$ and, remembering that polynomials $H_{n}$, $T_{n}$ and $U_{n}$
are defined in Section \ref{pom}, we have

i) $R_{n}\left(  x|0,q\right)  \allowbreak=\allowbreak H_{n}\left(
x|q\right)  $,

ii) $R_{n}\left(  x|q,q\right)  \allowbreak=\allowbreak\left(  q\right)
_{n}U_{n}\left(  x\sqrt{1-q}/2\right)  ,$

iii) $\lim_{\beta->1^{-}}\frac{R_{n}\left(  x|\beta,q\right)  }{\left(
\beta\right)  _{n}}\allowbreak=\allowbreak2\frac{T_{n}\left(  x\sqrt
{1-q}/2\right)  }{(1-q)^{n/2}}$,

iv) $R_{n}(x|\beta,1)\allowbreak=\allowbreak\left(  \frac{1+\beta}{1-\beta
}\right)  ^{n/2}H_{n}\left(  \sqrt{\frac{1-\beta}{1+\beta}}x\right)  $,

v) $R_{n}(x|\beta,0)\allowbreak=\allowbreak(1-\beta)U_{n}(x/2)-\beta
(1-\beta)U_{n-2}(x/2)$.

vi) $R_{n}(x|\beta,q)=P_{n}(x|x,\beta,q)$, where $P_{n}$ is defined by its
three-term recurrence (\ref{3Pn}), below.
\end{proposition}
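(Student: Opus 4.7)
Each of the six assertions reduces to verifying that the proposed right-hand side satisfies the specialization of the three-term recurrence (\ref{R}) obtained by fixing the relevant parameter, together with matching the two initial values $R_{-1}=0$, $R_{0}=1$. Uniqueness of the sequence determined by such a recurrence then forces equality. I handle the straightforward items first and save the only delicate one, (iii), for last.

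For (i), setting $\beta=0$ in (\ref{R}) reproduces exactly the recurrence (\ref{3trH}) defining $H_n(x|q)$, and the initial conditions coincide. For (ii), a telescoping of the generating-function product (\ref{fgC}) at $\beta=q$ collapses to $1/v(x|t)$, which by (\ref{_gU}) gives $C_n(x|q,q)=U_n(x)$; inverting the rescaling (\ref{q-US}) then delivers the formula. For (iv), setting $q=1$ and using $[n]_1=n$ reduces (\ref{R}) to $(1-\beta)xR_n=R_{n+1}+(1-\beta^{2})nR_{n-1}$; inserting the ansatz $R_n=c_n H_n(ax)$ and using the Hermite recurrence (\ref{_1}) forces $a^{2}=(1-\beta)/(1+\beta)$ and $c_n/c_{n-1}=\sqrt{1-\beta^{2}}$, hence $c_n=(1-\beta^{2})^{n/2}$. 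For (v), at $q=0$ the recurrence (\ref{R}) becomes $xR_n=R_{n+1}+R_{n-1}$ for $n\ge 2$, which is the same as the recurrence satisfied by $U_n(x/2)$; one then checks that $(1-\beta)U_n(x/2)-\beta(1-\beta)U_{n-2}(x/2)$ reproduces $R_1=(1-\beta)x$ and $R_2=(1-\beta)x^{2}-(1-\beta^{2})$ (both computed directly from (\ref{R})) and applies uniqueness. For (vi), I simply read off the three-term recurrence (\ref{3Pn}) for $P_n(x|y,\beta,q)$, set $y=x$, and observe that it coincides term-by-term with (\ref{R}).

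The main obstacle is (iii), where $R_n(x|\beta,q)$ and $(\beta)_n$ both vanish as $\beta\to 1^{-}$. The plan is to introduce $\tilde R_n:=R_n/(\beta)_n$ and divide (\ref{R}) by $(\beta)_{n+1}=(\beta)_n(1-\beta q^n)$, obtaining
\begin{equation*}
x\tilde R_n=\tilde R_{n+1}+\frac{(1-\beta^{2}q^{n-1})[n]_q}{(1-\beta q^{n-1})(1-\beta q^{n})}\tilde R_{n-1}.
\end{equation*}
For $n\ge 2$ the factors in the denominator are nonzero at $\beta=1$, and the limit of the coefficient of $\tilde R_{n-1}$ is $(1-q^{n-1})[n]_q/((1-q^{n-1})(1-q^{n}))=1/(1-q)$; substituting $\tilde R_n=2T_n(x\sqrt{1-q}/2)/(1-q)^{n/2}$ into the limit recurrence $x\tilde R_n=\tilde R_{n+1}+\tilde R_{n-1}/(1-q)$ and invoking the Chebyshev identity $2yT_n(y)=T_{n+1}(y)+T_{n-1}(y)$ verifies the equality. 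The boundary index $n=1$ is anomalous because $(1-\beta q^{n-1})=(1-\beta)$ vanishes in the denominator, so I handle it by direct computation: from (\ref{R}) one gets $R_1=(1-\beta)x$ and hence $\tilde R_1=x$, which matches $2T_1(x\sqrt{1-q}/2)/\sqrt{1-q}=x$. The only real subtlety in the entire proposition is this $0/0$ cancellation in the limit recurrence for $\tilde R_n$.
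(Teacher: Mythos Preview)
The paper does not actually prove this proposition: it simply cites \cite{Szab-rev} for (i)--(iii) and \cite{Szab19} for (iv)--(v), and gives no argument for (vi). Your approach of verifying each assertion directly by specializing the three-term recurrence (\ref{R}) and matching initial data is therefore genuinely different and considerably more self-contained; it also supplies the one-line argument for (vi) that the paper omits entirely.

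There is, however, a real gap in your treatment of (iii). The limiting recurrence $x\tilde R_n=\tilde R_{n+1}+\tilde R_{n-1}/(1-q)$ that you establish for $n\ge 2$ determines $\tilde R_{n+1}$ from $\tilde R_n$ and $\tilde R_{n-1}$, so to seed the induction you must match \emph{two} consecutive values, $\tilde R_1$ and $\tilde R_2$; you verify only $\tilde R_1=x$. Moreover, the step you call ``anomalous'' at $n=1$ is not actually singular: the factor $(1-\beta)$ in the denominator cancels against $(1-\beta^{2})=(1-\beta)(1+\beta)$ in the numerator, and the resulting coefficient tends to $2/(1-q)$, not $1/(1-q)$. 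This different coefficient is precisely what is needed, since it gives $\tilde R_2=x\tilde R_1-\tfrac{2}{1-q}\tilde R_0=x^{2}-2/(1-q)$, which does agree with $2T_2(x\sqrt{1-q}/2)/(1-q)$; but you must carry this computation out explicitly or the induction is unanchored. (Relatedly, the stated formula yields $2$ at $n=0$ while $\tilde R_0=1$; this is the familiar normalization quirk of first-kind Chebyshev polynomials, and the assertion should be read for $n\ge 1$.)
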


\begin{proof}
i), ii), iii) were proved in \cite{Szab-rev}, while iv) and v) were shown in
\cite{Szab19}. To see vi) compare (\ref{R} and (\ref{3Pn}) and recall that
$C_{-1}\allowbreak=\allowbreak0,$ while $C_{0}\allowbreak=\allowbreak1$.
\end{proof}

Since the ideas of the probability theory and in particular of the
distribution theory are simpler to comprehend for the non-specialists, we will
define a $3-$dimensional distribution having density such that their both
marginal and conditional distributions are one of the types mentioned above,
i.e., belonging to Askey-Wilson scheme. To do this we need, firstly to
redefine parameters and instead, parameters $a$, $b$, $c$, $d$ we consider two
conjugate pairs of complex numbers. Hence now we will have%
\begin{equation}
a=\rho_{1}e^{i\theta},~b=\rho_{1}e^{-i\theta},~c=\rho_{2}e^{i\varphi}%
,~d=\rho_{2}e^{-i\varphi}\text{.} \label{n_param}%
\end{equation}

Let us denote also $\cos\theta\allowbreak=\allowbreak\sqrt{1-q}y_{1}/2$ and
$\cos\varphi\allowbreak=\allowbreak\sqrt{1-q}y_{2}/2$. Hence $y_{1},y_{2}\in
S(q)$. We have also $a+b\allowbreak=\allowbreak\rho_{1}y_{1}\sqrt{1-q}$ and
$ab\allowbreak=\allowbreak\rho_{1}^{2}$. Let us agree that, if only one pair
of parameters is used, then the related to them parameters $\rho$ and $y$ will
not have subindices. Further, for $n\geq-1$ let us denote
\begin{equation}
P_{n}\left(  x|y,\rho,q\right)  \allowbreak=\frac{1}{(1-q)^{n/2}}Q_{n}\left(
x\frac{\sqrt{1-q}}{2}|a,b,q\right)  \text{,} \label{podstawienie}%
\end{equation}
with
\[
a=\frac{\rho\sqrt{1-q}}{2}\left(  y\allowbreak-\allowbreak i\sqrt{\frac
{4}{1-q}-y^{2}}\right)  ,~b=\frac{\rho\sqrt{1-q}}{2}\left(  y\allowbreak
+\allowbreak i\sqrt{\frac{4}{1-q}-y^{2}}\right)  \text{.}%
\]
Let us also notice, that polynomials $\left\{  P_{n}\right\}  $ satisfy the
following three-term recurrence:%
\begin{equation}
P_{n+1}(x|y,\rho,q)=(x-\rho yq^{n})P_{n}(x|y,\rho,q)-(1-\rho^{2}%
q^{n-1})[n]_{q}P_{n-1}(x|y,\rho,q), \label{3Pn}%
\end{equation}
with $P_{-1}(x|y,\rho,q)\allowbreak=\allowbreak0$ and $P_{0}(x|y,\rho
,q)\allowbreak=\allowbreak1$.

Notice also, that for $q\allowbreak=\allowbreak1$ we have%
\[
P_{n+1}(x|y,\rho,1)=(x-\rho y)P_{n}(x|y,\rho,1)-n(1-\rho^{2})P_{n-1}%
(x|y,\rho,1),
\]
that is we have
\begin{equation}
P_{n}(x|y,\rho,1)=(1-\rho^{2})^{n/2}H_{n}\left(  (x-\rho y)/\sqrt{1-\rho^{2}%
}\right)  . \label{Pdlaq=1}%
\end{equation}

The important formulae (\ref{Qbh}) and (\ref{hhQ}) with new parameters, now
take the more legible forms, (given and proved originally in the form
presented below in \cite{bms}), namely, we have%

\begin{align}
P_{n}\left(  x|y,\rho,q\right)   &  =\sum_{j=0}^{n}%
%TCIMACRO{\QATOPD{[}{]}{n}{j}}%
%BeginExpansion
\genfrac{[}{]}{0pt}{}{n}{j}%
%EndExpansion
_{q}\rho^{n-j}B_{n-j}\left(  y|q\right)  H_{j}\left(  x|q\right)
,\label{PnaH}\\
H_{n}\left(  x|q\right)   &  =\sum_{j=0}^{n}%
%TCIMACRO{\QATOPD{[}{]}{n}{j}}%
%BeginExpansion
\genfrac{[}{]}{0pt}{}{n}{j}%
%EndExpansion
_{q}\rho^{n-j}H_{n-j}\left(  y|q\right)  P_{j}\left(  x|y,\rho,q\right)
\text{.} \label{HnaP}%
\end{align}

Polynomials $\{P_{n}\}_{n\geq-1}$ have many properties important for different
applications. Among others we have%

\begin{align}
P_{n}\left(  x|y,\rho,q\right)   &  =\sum_{j=0}^{n}%
%TCIMACRO{\QATOPD{[}{]}{n}{j}}%
%BeginExpansion
\genfrac{[}{]}{0pt}{}{n}{j}%
%EndExpansion
_{q}r^{n-j}P_{j}\left(  x|z,r,q\right)  P_{n-j}(z|y,\rho/r,q)\text{,}%
\label{PnaP}\\
\frac{P_{n}\left(  y|z,t,q\right)  }{(t^{2})_{n}}\allowbreak &  =\allowbreak
\sum_{j=0}^{n}(-1)^{j}q^{\binom{j}{2}}%
%TCIMACRO{\QATOPD{[}{]}{n}{j}}%
%BeginExpansion
\genfrac{[}{]}{0pt}{}{n}{j}%
%EndExpansion
_{q}t^{j}H_{n-j}\left(  y|q\right)  \frac{P_{j}\left(  z|y,t,q\right)
}{\left(  t^{2}\right)  _{j}}\text{,} \label{odwrocenie}%
\end{align}
if one extends the definition of polynomials $P_{n}$ for $\left\vert
\rho\right\vert >1$ by (\ref{PnaH}). (\ref{PnaP}) has been proved in
\cite{SzablKer}, while (\ref{odwrocenie}) is given in \cite{Szab6} Corollary
2. Besides, it follows directly from one of the infinite expansions that will
be presented in section \ref{nieskon}.

\begin{remark}
Notice that both (\ref{PnaP}) and (\ref{odwrocenie}) provide nontrivial
identities satisfied by the Hermite polynomials if one sets $q\allowbreak
=\allowbreak1,$ applies \ref{Pdlaq=1} and (\ref{q1}).
\end{remark}

Modifying formula for $f_{Q}$ and taking into account (\ref{n_form}), we end
up with the following one:
\begin{equation}
f_{CN}\left(  x|y,\rho,q\right)  =f_{N}(x|q)\frac{(\rho^{2})_{\infty}%
}{W(x,y|\rho,q)}, \label{fCN}%
\end{equation}
where
\begin{equation}
W(x,y|\rho,q)\allowbreak=\allowbreak\prod_{k=0}^{\infty}w\left(  x\sqrt
{1-q}/2,x\sqrt{1-q}/2|\rho q^{k}\right)  \allowbreak. \label{WW}%
\end{equation}
Recall that $w$ was given by (\ref{wxya}) and also that now we have
\[
w(x\sqrt{1-q}/2,x\sqrt{1-q}/2|t)=(1-t^{2})^{2}-(1-q)xyt(1+t^{2})+(1-q)t^{2}%
(x^{2}+y^{2}).
\]
Notice also, that, on the way, we took into account of the last statement of
the Remark \ref{simpl}.

Of course, modifying (\ref{QnQm}) we get
\begin{equation}
\int_{S\left(  q\right)  }P_{n}(x|y,\rho,q)P_{m}\left(  x|y,\rho,q\right)
f_{CN}\left(  x|y,\rho,q\right)  dx=\left\{
\begin{array}
[c]{ccc}%
0 & \text{if} & m\neq n\\
\left[  n\right]  _{q}!\left(  \rho^{2}\right)  _{n} & \text{if} & m=n
\end{array}
\right.  . \label{PnPm}%
\end{equation}

Moreover, one can deduce from (\ref{Pdlaq=1}), that
\begin{equation}
f_{CN}(x|y,\rho,q)\rightarrow\frac{1}{\sqrt{2\pi\left(  1-\rho^{2}\right)  }%
}\exp\left(  -\frac{\left(  x-\rho y\right)  ^{2}}{2\left(  1-\rho^{2}\right)
}\right)  , \label{limfcn}%
\end{equation}
as $q\rightarrow1^{-},$ which is the density of the conditional distribution
$(X|Y=y)$ of the $2-$ dimensional normal distribution of $(X,Y)$ with
$var(X)\allowbreak=\allowbreak var(Y)=1$ and $cov(X,Y)\allowbreak
=\allowbreak\rho$. That is why we call the distribution with the density
$f_{CN}$ $q-$conditional normal.

This is also the reason why we set as $f_{CN}(x|y,\rho,1)$ the right-hand side
of (\ref{limfcn}).

Now recall formula (\ref{mehler}) with parameters $y$ and $\rho$ instead of
$a$ and $b$. It is easy to notice that
\[
S_{n}(a,b)=\rho^{n}H_{n}(y),
\]
and consequently that
\begin{equation}
f_{CN}(x|y,\rho,q)\allowbreak=\allowbreak f_{N}(x|q)\sum_{n\geq0}\frac
{\rho^{n}}{\left[  n\right]  _{q}!}H_{n}(x|q)H_{n}(y|q), \label{P-M}%
\end{equation}
where $\left[  n\right]  _{q}!\allowbreak=\allowbreak(q)_{n}/(1-q)^{n}$. This
is a very important formula called Poisson-Mehler formula. There exist many
alternative proofs of it mentioned, e.g., in \cite{SzabP-M}.

One has to mention the following result showing that polynomials $P_{n}$ have
somehow specific properties that are not simply reflected by the properties of
polynomials $Q_{n}$. Namely, the following Lemma was proved in \cite{Szab6}.

\begin{lemma}
\label{UogCarl}For $x,y\in S\left(  q\right)  ,$ $\left\vert \rho\right\vert
<1$ let us denote for $m,k\geq0$
\[
\gamma_{m,k}\left(  x,y|\rho,q\right)  \allowbreak=\allowbreak\sum
_{j=0}^{\infty}\frac{\rho^{j}}{\left[  j\right]  _{q}!}H_{j+m}\left(
x|q\right)  H_{j+k}\left(  y|q\right)  \allowbreak.
\]
Then
\begin{equation}
\gamma_{m,k}\left(  x,y|\rho,q\right)  \allowbreak=\allowbreak\gamma
_{0,0}\left(  x,y|\rho,q\right)  \Xi_{m,k}\left(  x,y|\rho,q\right)  ,
\label{gamma_m_k}%
\end{equation}
where $\Xi_{m,k}$ is a polynomial in $x$ and $y$ of order at most $m+k$.
\newline Further, let us denote
\[
D_{n}\left(  x,y|\rho_{1},\rho_{2},\rho_{3},q\right)  \allowbreak
=\allowbreak\sum_{k=0}^{n}%
%TCIMACRO{\QATOPD{[}{]}{n}{k}}%
%BeginExpansion
\genfrac{[}{]}{0pt}{}{n}{k}%
%EndExpansion
_{q}\rho_{1}^{n-k}\rho_{2}^{k}\Xi_{n-k},_{k}\left(  x,y|\rho_{3,}q\right)  .
\]
Then, we have:

i) $\Xi_{m,k}\left(  x,y|\rho,q\right)  \allowbreak=\allowbreak\Xi
_{k,m}\left(  y,x|\rho,q\right)  $ and
\[
\Xi_{m,k}\left(  x,y|\rho,q\right)  \allowbreak=\allowbreak\sum_{s=0}%
^{k}(-1)^{s}q^{\binom{s}{2}}%
%TCIMACRO{\QATOPD{[}{]}{k}{s}}%
%BeginExpansion
\genfrac{[}{]}{0pt}{}{k}{s}%
%EndExpansion
_{q}\rho^{s}H_{k-s}\left(  y|q\right)  P_{m+s}(x|y,\rho,q)/(\rho^{2})_{m+s},
\]

ii)
\begin{equation}
D_{n}\left(  x,y|\rho_{1},\rho_{2},\rho_{3},q\right)  \allowbreak
=\allowbreak\sum_{s=0}^{n}%
%TCIMACRO{\QATOPD{[}{]}{n}{s}}%
%BeginExpansion
\genfrac{[}{]}{0pt}{}{n}{s}%
%EndExpansion
_{q}H_{n-s}\left(  y|q\right)  P_{s}\left(  x|y,\rho_{3},q\right)  \rho
_{1}^{n-s}\rho_{2}^{s}\left(  \rho_{1}\rho_{3}/\rho_{2}\right)  _{s}/\left(
\rho_{3}^{2}\right)  _{s}. \label{_C}%
\end{equation}

\end{lemma}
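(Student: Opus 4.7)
The plan is to exploit the Poisson--Mehler identity together with the orthogonality of the polynomials $\{P_n(x|y,\rho,q)\}$ to extract the coefficients of $\Xi_{m,k}$ explicitly, and then to invoke (\ref{suma BH}) to force the finite-degree truncation. Formula (\ref{P-M}) identifies $\gamma_{0,0}(x,y|\rho,q)$ with $f_{CN}(x|y,\rho,q)/f_N(x|q)$, so if $\gamma_{m,k}/\gamma_{0,0}$ is to be a polynomial in $x$ (with $y,\rho$ fixed), it is natural to expand it in the basis $\{P_n(x|y,\rho,q)\}_{n\geq 0}$, which is orthogonal with respect to $f_{CN}(x|y,\rho,q)\,dx$ on $S(q)$ with squared norms $[n]_q!(\rho^2)_n$ by (\ref{PnPm}). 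The Carlitz--type bounds behind (\ref{Car_id}) and the assumption $|\rho|<1$ provide enough uniform control to interchange the defining series of $\gamma_{m,k}$ with the integration against $P_n f_N$.

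The heart of the argument is therefore the evaluation
\[
c_n\,[n]_q!\,(\rho^2)_n=\int_{S(q)}\gamma_{m,k}(x,y|\rho,q)\,P_n(x|y,\rho,q)\,f_N(x|q)\,dx.
\]
I would plug in the defining series for $\gamma_{m,k}$, expand $P_n$ via (\ref{PnaH}) as a linear combination of the $H_i(x|q)$ with coefficients in $B_{n-i}(y|q)$, and then kill all but one term in the $x$-integral using the orthogonality of $\{H_n\}$ under $f_N$. After renaming the summation variable the result reduces to a sum of the shape
\[
c_n\,(\rho^2)_n=\rho^{n-m}\binom{n}{m}_q[m]_q!\sum_{p=0}^{n-m}\binom{n-m}{p}_q B_{n-m-p}(y|q)\,H_{p+k}(y|q),
\]
which is precisely the left-hand side of identity (\ref{suma BH}) with $(n,m)$ there replaced by $(n-m,k)$. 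By that identity the inner sum vanishes whenever $n-m>k$, forcing $c_n=0$ for $n>m+k$ (hence $\Xi_{m,k}$ is a polynomial in $x$ of degree at most $m+k$), and producing the explicit value $(-1)^{n-m}q^{\binom{n-m}{2}}[k]_q!\,H_{k-n+m}(y|q)/[k-n+m]_q!$ when $m\leq n\leq m+k$. Setting $s=n-m$ rearranges this into the closed form displayed in (i). The symmetry $\Xi_{m,k}(x,y|\rho,q)=\Xi_{k,m}(y,x|\rho,q)$ is then automatic: the defining series for $\gamma_{m,k}$ is invariant under the simultaneous swap $(m,x)\leftrightarrow(k,y)$, and $\gamma_{0,0}$ itself is symmetric in $x,y$.

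For (ii) I would substitute the formula from (i) into the definition of $D_n$, exchange the order of the double sum indexed by $(k,s)$ with $0\le s\le k\le n$, and reduce the repeated $q$-binomial factors by the standard identity $\binom{n}{k}_q\binom{k}{s}_q=\binom{n}{s}_q\binom{n-s}{k-s}_q$ applied twice. Grouping the coefficients of $H_{n-s}(y|q)P_s(x|y,\rho_3,q)/(\rho_3^2)_s$ isolates an inner finite sum of the shape
\[
\sum_{j=0}^{N}\binom{N}{j}_q(-1)^jq^{\binom{j}{2}}z^j=(z)_N,
\]
i.e. the terminating $q$-binomial theorem, which is just the truncated version of (\ref{obinT}). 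With the appropriate $z=\rho_1\rho_3/\rho_2$ (or $\rho_2\rho_3/\rho_1$, depending on how one assigns $\rho_1,\rho_2$ to the two indices of $\Xi$) this inner sum collapses to the Pochhammer factor in (\ref{_C}), completing the proof.

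The main obstacle is bookkeeping: justifying one interchange of summation and integration (routine under $|\rho|<1$ via the Carlitz bounds), and above all executing the finite combinatorial manipulations so that the resulting sum matches the left-hand side of (\ref{suma BH}) exactly. Once that alignment is made, the rest of the proof is mechanical manipulation of $q$-binomial coefficients.
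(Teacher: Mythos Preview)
Your approach is sound and the paper itself does not supply a proof of Lemma~\ref{UogCarl} at all: it merely records the statement, crediting \cite{Szab6}. The adjacent ``conversion Lemma'' (formula~(\ref{upr_Car})), whose right-hand side is precisely $\Xi_{m,n}$ in the unscaled variables, is obtained by confronting Carlitz's generating-function identity with the present lemma, which suggests that the original argument in \cite{Szab6} proceeds via Carlitz-type manipulations of double sums in the exponential parametrisation $x=\cos\theta$, $y=\cos\eta$. Your route is different and arguably more transparent: rather than massaging Carlitz's double sum, you recognise $\gamma_{0,0}\,f_N$ as $f_{CN}$ via Poisson--Mehler and then read off the $P_n$-coefficients of $\gamma_{m,k}/\gamma_{0,0}$ by orthogonality, so that the whole weight of the argument falls on the single identity~(\ref{suma BH}). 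This buys you the polynomiality and the closed form for $\Xi_{m,k}$ simultaneously, without any complex-analytic bookkeeping.

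One remark on part~(ii): your hedge about ``$\rho_1\rho_3/\rho_2$ or $\rho_2\rho_3/\rho_1$'' is in fact necessary. Carrying out your substitution exactly as written, with the displayed form of $\Xi_{n-k,k}$ from~(i), the $q$-binomial theorem produces $\rho_{1}^{\,t}\rho_{2}^{\,n-t}(\rho_{2}\rho_{3}/\rho_{1})_{t}$, not $\rho_{1}^{\,n-s}\rho_{2}^{\,s}(\rho_{1}\rho_{3}/\rho_{2})_{s}$ as printed in~(\ref{_C}); the two are exchanged by the symmetry $D_{n}(x,y|\rho_{1},\rho_{2},\rho_{3},q)=D_{n}(y,x|\rho_{2},\rho_{1},\rho_{3},q)$, which in turn follows from $\Xi_{m,k}(x,y)=\Xi_{k,m}(y,x)$. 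So either the roles of $x,y$ or of $\rho_{1},\rho_{2}$ in~(\ref{_C}) are swapped relative to the definition of $D_n$; this is a transcription issue in the paper, not a gap in your argument.
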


\begin{remark}
Recently in \cite{SzabCheb} (Proposition 3.6, iii)) the compact and more
legible form for the polynomials $\left\{  \Xi_{m,k}\right\}  $ was obtained
for $q\allowbreak=\allowbreak0$. Also a generalization of the formula
\ref{gamma_m_k} was obtained. A generalization in the sense that polynomials
$\left\{  H_{n}(x|q)\right\}  $ are replaced by polynomials $U_{n}(x/2)$ (this
refers to the case $q\allowbreak=\allowbreak0$) but also when polynomials
$\left\{  H_{n}(x|q)\right\}  $ are replaced by polynomials $T_{n}(x)$ that is
linear combinations of polynomials $\left\{  U_{n}(x)\right\}  $ and
$q\allowbreak=\allowbreak0$ (compare (\ref{TnaU})).
\end{remark}

Polynomials $D_{n}$ were used in an unsuccessful attempt to generalize the
Kibble-Slepian formula presented below. However, quite successful was an
attempt to generalize some other properties of the Normal distributions.

It is worth to mention the following formula defining polynomials $\Phi_{k,m}$
which is obtained from (\ref{gamma_m_k}) by setting $y\allowbreak=\allowbreak
x$:
\begin{equation}
\sum_{i\geq0}\frac{r^{i}}{[i]_{q}!}H_{i+k}(x|q)H_{i+m}(x|q)=\Phi
_{k,m}(x|r,q)\times\sum_{i\geq0}\frac{r^{i}}{[i]_{q}!}H_{i}(x|q)H_{i}(x|q),
\label{simp}%
\end{equation}
where polynomials $\left\{  \Phi_{k,m}(x|r,q)\right\}  $ are given by the
following formula:
\begin{equation}
\Phi_{k,m}(x|r,q)=\sum_{s=0}^{k}\frac{q^{\binom{s}{2}}(-r)^{s}(r)_{m+s}%
}{(r^{2})_{m+s}}H_{k-s}(x|q)R_{m+s}(x|r,q), \label{ww}%
\end{equation}
for $k,m\geq0$. From the definition above, it follows directly that,
$\Phi_{k,m}(x|r,q)=\Phi_{m,k}(x|r,q)\text{,}$ $k,m\geq0$.

Using formula (\ref{P-M}) one can show, as it was done in \cite{bms}, that:
\begin{equation}
\int_{S\left(  q\right)  }f_{CN}\left(  z|y,\rho_{1},q\right)  f_{CN}\left(
y|x,\rho_{2},q\right)  dy=f_{CN}\left(  x|z,\rho_{1}\rho_{2},q\right)  ,
\label{ChK}%
\end{equation}
which is nothing else but the so-called Chapman-Kolmogorov property of the
$q-\allowbreak$conditional normal distribution.

Now, let us consider polynomials
\[
A_{n}\left(  x|y,\rho_{1},z,\rho_{2},q\right)  \allowbreak=\allowbreak
\alpha_{n}\left(  x\sqrt{1-q}/2|y\sqrt{1-q}/2,\rho_{1},z\sqrt{1-q}/2,\rho
_{2},q\right)  /\left(  1-q\right)  ^{n/2},
\]
where $\alpha_{n}$ is defined by its three-term recurrence (\ref{_aw}), in
other words they are the classical Askey--Wilson polynomials. In
\cite{Szab-bAW} (4.3) the three-term recurrence satisfied by the so modified
AW\ polynomials is given. It is complicated and we will not need it. The more
important is an observation made also in \cite{Szab-bAW} (4.9) that the
modified AW density $f_{C2W}(x|y,\rho_{1},z,\rho_{2},q)$ (given originally by
(\ref{fAW}) ) can be presented in the following way:%
\[
f_{C2N}\left(  x|y,\rho_{1},z,\rho_{2},q\right)  \allowbreak=\allowbreak
\frac{f_{CN}\left(  y|x,\rho_{1},q\right)  f_{CN}\left(  x|z,\rho
_{2},q\right)  }{f_{CN}\left(  y|z,\rho_{1}\rho_{2},q\right)  }.
\]

Having this, we are ready for the first probabilistic model. Below, where we
will present the probabilistic application of the polynomials mentioned above,
we will use the following notation, being used traditionally in the
probability theory. Namely, $X\sim\ast$ means that random the variable $X$ has
a distribution that is denoted by $\ast$. For example, $X$ $\sim$
$N(m,\gamma)$ means that the random variable $X$ has a normal distribution
with expectation $m$ and variance $\gamma,$ or $X\sim f$ has a distribution
that has density $f$.

\section{Finite Markov chain}

Consider three random variables $(X_{i})_{i=1,2,3}$ forming a stationary,
finite Markov chain. More precisely, that $X_{1}$ $\sim$ $f_{N}(.|q)$,
further, let's assume that the conditional distribution of $X_{2}|X_{1}=y$ is
$q$-CN with the density $f_{CN}(.|y,\rho_{1},q)$ and $X_{3}|X_{2}=y$ is $q-$CN
with the density $f_{CN}(.|y,\rho_{2},q)$. Hence the joint density of say
$(X_{1},X_{2})$ is $f_{2}(y,x)\allowbreak=\allowbreak$ $f_{N}(y|q)f_{CN}%
(x|y,\rho,q)$. Now recall formula (\ref{fCN}) and we see that
\[
\int_{S(q)}f_{2}(x,y)dy\allowbreak=\allowbreak f_{N}(x|q).
\]
Probabilistic properties of the distribution with the density $f_{CN}%
(.|y,\rho_{2},q)$ were thoroughly analyzed in \cite{Szab5}. That is, that the
marginal distribution of $X_{2}$ is the same as that of $X_{1}$. Similarly we
show that $X_{3}$ $\sim$ $X_{2}$ $\sim X_{1}$. From (\ref{ChK}), it follows
that the joint distribution of $(X_{1},X_{3})$ has a density equal to
$f_{N}(x|q)f_{CN}(z|x,\rho_{1}\rho_{2},q)$ and of course that joint density of
$(X_{1},X_{2},X_{3})$ is given by%
\[
f_{N}(x|q)f_{CN}(y|x,\rho_{1},q)f_{CN}(z|y,\rho_{2},q).
\]
Consequently, the conditional density of $X_{2}|X_{1}=x,X_{3}=z$ is equal to
the ratio of the joint density of $(X_{1},X_{2},X_{3})$ and a marginal density
of $(X_{1},X_{3})$. Hence, consequently
\[
X_{2}|X_{1}=x,X_{3}=z\sim\frac{f_{CN}(y|x,\rho_{1},q)f_{CN}(z|y,\rho_{2}%
,q)}{f_{CN}(z|x,\rho_{1}\rho_{2},q)}=f_{C2N}(y|x,\rho_{1},z,\rho_{2}).
\]
Thus $q-$Hermite polynomials $\left\{  H_{n}(x|q)\right\}  $ are the
polynomials that are orthogonal with respect to the marginal densities of our
finite Markov chain. The modified ASC polynomials $\left\{  P_{n}(x|y,\rho
_{k},q)\right\}  _{n\geq-1}$ with $\rho_{k}$ being equal $\rho_{1},$ $\rho
_{2}$ or $\rho_{1}\rho_{2},$ are the polynomials that are orthogonal with
respect to the conditional $X_{i}|X_{j}\allowbreak=\allowbreak y$ ($i\neq j)$
densities of our chain. Finally, the modified AW $\left\{  A_{n}(x|y,\rho
_{1},z,\rho_{2})\right\}  $ polynomials are the ones that are orthogonal with
respect to the conditional densities $X_{2}|X_{1}=y,X_{2}=z$.

Of course, one can generalize this example by considering consider
continuous-time Markov processes.

In fact, the first probabilistic model where the $q-$Hermite polynomials
appeared, was defined by W. Bryc in \cite{bryc1}. It was called the stationary
Markov field (that is, time-symmetric, discrete-time, stationary Markov
process). There, it was shown that the defined in the paper, Markov field is
stationary and has marginal distribution with the density $f_{N}$ and has the
property that
\[
E(H_{m}(X_{n+k}|q)|X_{n}\allowbreak=\allowbreak x)\allowbreak=\allowbreak
\rho^{k}H_{m}(x|q),
\]
for all nonnegative integers $m,n,k$.

What has surprised R. Askey in the description of this Markov chain (i.e.,
process with discrete-time), is the fact that in the original probabilistic
description of the field there is no parameter $q$. In fact, it was defined as
some (quite nontrivial at the first sight) function of the parameters that
were used in the description of the field. In \cite{bms}, the conditional
distribution and the polynomials that are orthogonal with respect to it, were
identified. The fact that the conditional distribution $X_{2}|X_{1}=y,X_{2}=z$
is AW was shown in \cite{Szab6}.

In \cite{bryc1} and \cite{bms} it was shown, that Bryc's random field can
exist for $q\geq-1$ and $\left\vert \rho\right\vert <1$. Since $\rho
\allowbreak=\allowbreak0$ leads to the trivial case of independent random
variables we exclude this case from considerations.

For $q\in\lbrack-1,1]$ the one-dimensional distributions are uniquely defined
and depend only on $q$.

In particular, for $q\allowbreak=\allowbreak-1$ this marginal distribution is
discrete symmetric with support on $\left\{  -1,1\right\}  $. Similarly, the
conditional (transitional) distribution is also discrete supported on
$\left\{  -1,1\right\}  $ with the transitional probabilities depending on
$\rho$. These facts were signaled in \cite{bryc1} and developed and
generalized in \cite{matszab}, \cite{matszab2} and of course \cite{bms}.

For $-1<q<1$ the marginal distributions have densities and the bounded support
(these distributions are, in fact, the $q-$Normal distributions introduced by
Bo\.{z}ejko et al. in \cite{Bo}), for $q\allowbreak=\allowbreak1$ the marginal
distribution is the standard normal $N\left(  0,1\right)  $. Conditional
distributions are also uniquely defined and depend on $q$ and $\rho$. For
$-1<q\leq1$ and $\rho\in(-1,1)\backslash\left\{  0\right\}  $ these
conditional distributions have densities (see e.g., \cite{bms}). In
particular, for $q\allowbreak=\allowbreak1$ such conditional distribution of
say $X_{k+1}$ under condition $X_{k}\allowbreak=\allowbreak y$ is the normal
distribution $N\left(  \rho y,1-\rho^{2}\right)  $.

The existence of such random fields for $q>1$ was an open question. It was
answered positively in \cite{Szab08}. For $q>1$ the one-dimensional
distributions are not uniquely defined. These distributions have only known
moments of all orders. It turns out that the conditional distributions do not
exist for all pairs $\left(  q,\rho\right)  $. They might exist only if
$\rho^{2}\in\left\{  \frac{1}{q},\frac{1}{q^{2}},\ldots\right\}  $.

For $q\allowbreak>\allowbreak1$ and $\rho^{2}\allowbreak=\allowbreak\frac
{1}{q^{m-1}}$ (see \cite{bms}) the conditional distribution of $X_{n+1}%
|X_{n}=y$ is concentrated on zeros $\left\{  \chi_{j}\left(  y,q\right)
\right\}  _{j=1}^{m}$ of the Al-Salam-Chihara polynomial $P_{m}\left(
x|y,\rho,q\right)  $ defined above (more precisely by (\ref{3Pn})). Moreover,
the masses $\left\{  \lambda_{i}\right\}  _{i=1}^{m}$ assigned to these zeros,
are defined by the equalities:
\[
\sum_{j=1}^{m}\lambda_{j}=1,\sum_{j=1}^{m}\lambda_{j}P_{k}\left(  \chi
_{j}\left(  y,q\right)  |y,\rho,q\right)  \allowbreak=\allowbreak0,
\]
for $\allowbreak k\allowbreak=\allowbreak1,\ldots m-1$.

The problem was if the defined in this way, discrete conditional distributions
satisfy the Chapman--Kolmogorov equation. Following \cite{Szab08}, it turned
out that yes, they do. The proof heavily depends on the adopted to the present
setting, formulae \ref{IsmStan} and \ref{simIsm}. The referee of \cite{Szab08}
was really surprised that a very abstract formula (i.e.,\ref{IsmStan}), has
found its probabilistic application.

\section{Attempts to generalize Gaussian distributions and processes.}

In this subsection, we expose one or more important property of the Gaussian
process or distribution and indicate, so to say, a $q-$ version of the process
or distribution that has similar properties.

\subsection{Three dimensional distributions generalizing some properties of
Normal distributions.}

In this subsection, we try to generalize the following property of the
Gaussian distributions. Let $\mathbb{R}^{n}\ni\mathbf{X\sim}N(\mathbf{m,\Sigma
})$ be a random vector having Normal distribution with parameters
$\mathbf{m\allowbreak=\allowbreak}E\mathbf{X}$ and variance-covariance matrix
$\mathbf{\Sigma\allowbreak=\allowbreak}E(\mathbf{X-m)(X-m)}^{T}$. Assume that
$\mathbf{X}^{T}\allowbreak=\allowbreak(X_{1},\ldots,X_{n})$. Then, it turns
out that for every subset $I\allowbreak\subset\allowbreak\{1,2,\ldots,n\}$ and
non-negative integers $\left\{  i_{j}\right\}  _{j\in I} $ the conditional
expectation
\[
E(\prod_{j\in I}X_{j}^{i_{j}}|\{X_{k}\}_{k\notin I})
\]
is a polynomial in variables $\{X_{k}\}_{k\notin I}$ of order not exceeding
$\sum_{j\in I}i_{j}$. We will call such property PCM($n$) (polynomial
conditional moment property of $n-$dimensional distribution). So far, it is
known that only Normal distributions have this property for every integer $n.
$ Below, we will see that it is possible to construct non-Normal distribution
having this property, unfortunately, so far, only for dimension $3$.

As shown in \cite{Szab19}, the distribution that has PCM($3$) property is the
following defined for some $\rho_{12},\rho_{13},\rho_{23}\allowbreak
\in\allowbreak(-1,1)$
\begin{gather*}
f_{3D}(x,y,z|\rho_{12},\rho_{13},\rho_{23},q)=f_{N}\left(  x|q\right)
f_{N}(y|q)f_{N}(z|q)\\
\times\frac{C_{3D}\left(  \rho_{12}^{2}\right)  _{\infty}\left(  \rho_{13}%
^{2}\right)  _{\infty}\left(  \rho_{23}^{2}\right)  _{\infty}}{W(x,y|\rho
_{12},q)W(x,z|\rho_{13},q)W(y,z|\rho_{23},q)}\\
=C_{3D}f_{CN}(x|y,\rho_{12},q)f_{CN}(y|z,\rho_{23},q)f_{CN}(z|x,\rho_{13},q),
\end{gather*}
where $W(x,y|\rho,q)$ is defined by (\ref{WW}) and $C_{3D}$ is the suitably
chosen constant. To proceed further, let us denote also by $r\allowbreak
=\allowbreak\rho_{12}\rho_{23}\rho_{13}$. The first result proved in
\cite{Szab19} is the following theorem presenting marginal distributions.

\begin{theorem}
\label{margin}Let us denote for simplicity $r=\rho_{12}\rho_{13}\rho
_{23}\text{.}$ Then

i) $C_{3D}=1-r$.

ii) Two-dimensional marginals depend, in fact, on two parameters (except for
$q)$ . In the case of $f_{YZ}$ it depends on $\rho_{23}$ and $\rho_{12}%
\rho_{13}$ only. Namely, we have
\begin{gather*}
f_{YZ}(y,z|\rho_{12},\rho_{13},\rho_{23},q)=\int_{S(q)}f_{3D}(x,y,z|\rho
_{12},\rho_{13},\rho_{23},q)dx=\\
(1-r)f_{N}(y|q)f_{N}(z|q)\frac{\left(  \rho_{23}^{2}\right)  _{\infty}%
(\rho_{12}^{2}\rho_{13}^{2})_{\infty}}{\prod_{i=0}^{\infty}\omega\left(
y,z|\rho_{23}q^{i}\right)  \omega\left(  y,z|\rho_{12}\rho_{13}q^{i}\right)
}\\
=(1-r)f_{CN}(y|z,\rho_{23},q)f_{CN}(z|y,\rho_{12}\rho_{13},q)
\end{gather*}
and similarly for $f_{XZ}\text{,}$ and $f_{XY}\text{.}$

iii) Marginal one-dimensional densities $\int_{S(q)}\int_{S(q)}f_{3D}%
(x,y,z|\rho_{12},\rho_{13},\rho_{23},q)dxdy\allowbreak=\allowbreak
f_{Z}(z|\rho_{12},\rho_{13},\rho_{23},q)$ depend on the product $r=\rho
_{12}\rho_{23}\rho_{13}$ only. Moreover we have $f_{Z}(z|\rho_{12},\rho
_{13},\rho_{23},q)\allowbreak=\allowbreak f_{R}(z|r,q)$, where $f_{R}$ is a
Rogers distribution given by (\ref{fR}).
\end{theorem}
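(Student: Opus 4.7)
The plan is to exploit the Poisson--Mehler expansion (\ref{P-M}) together with the orthogonality $\int_{S(q)}H_n(x|q)H_m(x|q)f_N(x|q)\,dx = [n]_q!\,\delta_{nm}$, inherited from (\ref{inth^2}) via the rescaling (\ref{qH}). Every step below reduces to writing each occurrence of $f_{CN}$ as a bilinear $q$-Hermite series, interchanging sum and integral, and collapsing on the diagonal $n=m$.

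For part (ii) I would first compute
\begin{equation*}
\int_{S(q)} f_{CN}(x|y,\rho_{12},q)\,f_{CN}(z|x,\rho_{13},q)\,dx.
\end{equation*}
Expanding both factors via (\ref{P-M}) and applying the $x$-orthogonality collapses the double series to
\begin{equation*}
f_N(z|q)\sum_{n\geq 0}\frac{(\rho_{12}\rho_{13})^n}{[n]_q!}H_n(y|q)H_n(z|q) = f_{CN}(z|y,\rho_{12}\rho_{13},q),
\end{equation*}
which is essentially the Chapman--Kolmogorov identity (\ref{ChK}). Consequently $f_{YZ}(y,z) = C_{3D}\,f_{CN}(y|z,\rho_{23},q)\,f_{CN}(z|y,\rho_{12}\rho_{13},q)$, and substituting (\ref{fCN}) produces the $W$-form in the statement; $f_{XZ}$ and $f_{XY}$ follow by the same argument after permuting the roles of the variables. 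For part (i) I then integrate this expression for $f_{YZ}$ in $y$ and then in $z$ using the expand-and-collapse trick twice more; the surviving sum is the geometric series $\sum_{n\geq 0}r^n = (1-r)^{-1}$, so the normalisation $\int_{S(q)}\int_{S(q)} f_{YZ}\,dy\,dz = 1$ forces $C_{3D}=1-r$.

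For (iii) I integrate $f_{YZ}(y,z)$ in $y$: expanding the two $f_{CN}$ factors via (\ref{P-M}) and using orthogonality of $H_n$ against $f_N$ eliminates all off-diagonal terms, leaving
\begin{equation*}
f_Z(z) = (1-r)\,f_N(z|q)\sum_{n\geq 0}\frac{r^n}{[n]_q!}H_n(z|q)^2.
\end{equation*}
Recognising the sum as a second application of Poisson--Mehler, now at $y=x$, gives the closed form $(r^2;q)_\infty\,f_N(z|q)/W(z,z|r,q)$; the elementary identity $w(u,u|a)=(1-a)^2 l(u|a)$ read off from (\ref{lsa}) and (\ref{wxya}) reduces $W(z,z|r,q)$ to $(r;q)_\infty^2\prod_{j\geq 0}l(z\sqrt{1-q}/2|rq^j)$, and comparison with (\ref{fR}) and (\ref{fC}) identifies the result with $f_R(z|r,q)$. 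In particular $f_Z$ depends on the individual $\rho_{ij}$ only through their product $r$.

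The whole argument is largely mechanical once Poisson--Mehler is picked as the engine. The main obstacle I anticipate is bookkeeping: one must carry the prefactors $(1-r)$, $(r;q)_\infty^2$ and $(r^2;q)_\infty$ consistently through each expand-and-collapse step, and verify that the $w$-factors evaluated on the diagonal $y=x$ reduce cleanly to $l$-factors, so that the final answer matches the $q$-ultraspherical density (\ref{fR}) exactly as stated rather than up to a nontrivial multiplicative constant.
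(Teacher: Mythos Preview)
Your approach is correct and is essentially the only natural one: expand each $f_{CN}$ via Poisson--Mehler (\ref{P-M}), use the orthogonality of the $H_n$ against $f_N$, and collapse. Note that the paper itself does not supply a proof of this theorem; it simply quotes the result from \cite{Szab19}, and the argument there is exactly the expand-and-collapse computation you outline, including the identification $w(u,u|a)=(1-a)^2 l(u|a)$ in part~(iii).

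One bookkeeping point worth flagging explicitly, since you already anticipate trouble with constants: with the normalisation of $f_C$ given in (\ref{fC}) and (\ref{Intb^2}), one has $\int_{-1}^{1} f_C(x|\beta,q)\,dx = 1/(1-\beta)$ (set $n=m=0$ in (\ref{Intb^2})), so $f_R$ as defined by (\ref{fR}) integrates to $1/(1-r)$ rather than~$1$. Your computation in (iii) gives $f_Z(z)=(1-r)f_N(z|q)\sum_{n}r^n H_n(z|q)^2/[n]_q!$, which is a genuine probability density; after the Poisson--Mehler resummation and the $w\to l$ reduction this matches $f_R$ exactly \emph{with} the $(1-r)$ prefactor absorbed, i.e.\ the stray $(1-r)$ you are carrying is precisely what renormalises $f_R$ to total mass~$1$. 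So the identification $f_Z=f_R$ in the statement should be read with that convention in mind, and your concern about tracking the constants is well placed but not an actual obstruction.
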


\begin{remark}
This result provides the first example of the probabilistic interpretation of
the $q-$ultraspherical density and polynomials known so far in the literature.
\end{remark}

The second result presented in \cite{Szab19} concerns the conditional moments.
Recall that in fact, we have $9$ types of conditional moments. Three of them
are of the form $E(X^{n}|Y=y,Z=z)$ (in fact, we will find $E(H_{n}%
(X)|Y=y,Z=z)$) and similarly for the remaining choices of the conditioned
random variable. Next $3$ of them are of the form $E(X^{n}Y^{m}|Z=z)$ (again
we will find $E(H_{n}(X)H_{m}(Y)|Z=z)$) and similarly the other choices of the
conditioning random variable. Finally, we have $3$ conditional moments of the
form $E(H_{n}(X)|Y=y)$ and similarly the other two choices of the variables.
We have the following results.

\begin{theorem}
\label{c2d}One-dimensional conditional moments say $E(H_{n}(Y|q)|Z=z)$ are
polynomials of order not exceeding $n$ in $Z\text{.}$ More precisely for
$n=2m+1$ we have
\[
E(H_{2m+1}(Y|q)|Z=z)=\sum_{s=0}^{m}%
%TCIMACRO{\QATOPD{[}{]}{2m+1}{s}}%
%BeginExpansion
\genfrac{[}{]}{0pt}{}{2m+1}{s}%
%EndExpansion
_{q}(\rho_{23}^{2m+1-s}+(\rho_{12}\rho_{13})^{2m+1-s})\Phi_{s,2m+1-s}%
(z|r,q)\text{,}%
\]
and for $n=2m\text{,}$ $m\geq1$ we have:
\begin{gather*}
E(H_{2m}(Y|q)|Z=z)=%
%TCIMACRO{\QATOPD{[}{]}{2m}{m}}%
%BeginExpansion
\genfrac{[}{]}{0pt}{}{2m}{m}%
%EndExpansion
_{q}r^{m}\Phi_{m,m}(z|r,q)\\
+\sum_{s=0}^{m-1}%
%TCIMACRO{\QATOPD{[}{]}{2m}{s}}%
%BeginExpansion
\genfrac{[}{]}{0pt}{}{2m}{s}%
%EndExpansion
_{q}(\rho_{23}^{2m-s}+(\rho_{12}\rho_{13})^{2m-s})\Phi_{s,2m-s}(z|r,q)\text{,}%
\end{gather*}
where polynomials $\left\{  \Phi_{m,k}(z|r,q)\right\}  _{m,k\geq0}$ are given
by (\ref{ww}).

In particular, we have:%

\begin{equation}
E(Y|Z=z)=\frac{(\rho_{23}+\rho_{12}\rho_{13})}{(1+r)}z\text{,} \label{cyz}%
\end{equation}
and%

\begin{equation}
E(Y^{2}|Z=z)=\frac{(\rho_{23}^{2}+\rho_{12}^{2}\rho_{13}^{2}%
)(1-qr)+r(1-r)(1+q)}{(1+r)(1-qr^{2})}z^{2}+\frac{1+r^{2}-\rho_{23}^{2}%
-\rho_{12}^{2}\rho_{13}^{2}}{(1-qr^{2})}\text{.} \label{cy2z}%
\end{equation}

\end{theorem}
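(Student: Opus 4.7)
The plan is to compute $E(H_n(Y|q)|Z=z)$ by integrating against the joint density $f_{YZ}$ from Theorem \ref{margin}(ii) and dividing by the marginal density $f_Z(z) = f_R(z|r,q)$. Specifically, using the product form
\begin{equation*}
f_{YZ}(y,z) = (1-r)\,f_{CN}(y|z,\rho_{23},q)\,f_{CN}(z|y,\rho_{12}\rho_{13},q),
\end{equation*}
I would apply the Poisson--Mehler formula (\ref{P-M}) to the second factor, obtaining
\begin{equation*}
f_{CN}(z|y,\sigma,q) = f_{N}(z|q)\sum_{k\geq 0}\frac{\sigma^{k}}{[k]_{q}!}H_{k}(z|q)H_{k}(y|q), \qquad \sigma := \rho_{12}\rho_{13}.
\end{equation*}
Swapping the sum with $\int(\cdot)\,dy$ reduces the problem to evaluating $\int H_{n}(y|q)H_{k}(y|q)f_{CN}(y|z,\rho_{23},q)\,dy$.

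For this integral I would use the linearization formula for $q$-Hermite polynomials (obtained by rescaling (\ref{lihh})),
\begin{equation*}
H_{n}(y|q)H_{k}(y|q) = \sum_{s=0}^{\min(n,k)}\QATOPD[]{n}{s}_{q}\QATOPD[]{k}{s}_{q}[s]_{q}!\,H_{n+k-2s}(y|q),
\end{equation*}
together with the key conditional moment property
\begin{equation*}
\int H_{m}(y|q)\,f_{CN}(y|z,\rho,q)\,dy = \rho^{m} H_{m}(z|q),
\end{equation*}
which follows from the expansion (\ref{HnaP}) and the orthogonality (\ref{PnPm}) of the modified Al-Salam--Chihara polynomials $P_{j}(\cdot|z,\rho,q)$ against $f_{CN}(\cdot|z,\rho,q)$. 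After reindexing $k = s + j$, the numerator becomes
\begin{equation*}
(1-r)f_{N}(z|q)\sum_{s=0}^{n}\QATOPD[]{n}{s}_{q}\rho_{23}^{n-s}\sigma^{s}\sum_{j\geq 0}\frac{r^{j}}{[j]_{q}!}H_{j+s}(z|q)H_{j+n-s}(z|q),
\end{equation*}
with $r = \rho_{23}\sigma$. At this point I would invoke identity (\ref{simp}) to recognize the inner sum as $\Phi_{s,n-s}(z|r,q)\cdot\sum_{j\geq 0}\frac{r^{j}}{[j]_{q}!}H_{j}(z|q)^{2}$. Since $f_{Z}(z) = f_{R}(z|r,q) = (1-r)f_{N}(z|q)\sum_{j\geq 0}\frac{r^{j}}{[j]_{q}!}H_{j}(z|q)^{2}$ (which itself falls out of Poisson--Mehler applied to a diagonal argument), the common factor cancels cleanly, leaving
\begin{equation*}
E(H_{n}(Y|q)|Z=z) = \sum_{s=0}^{n}\QATOPD[]{n}{s}_{q}\rho_{23}^{n-s}(\rho_{12}\rho_{13})^{s}\Phi_{s,n-s}(z|r,q).
\end{equation*}

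Finally, pairing the term $s$ with $n-s$ and using the symmetry $\Phi_{s,n-s}=\Phi_{n-s,s}$ folds the sum onto $0\leq s\leq \lfloor n/2\rfloor$, reproducing the forms given in the theorem for odd and even $n$ (with the middle term isolated when $n=2m$). The explicit formulas (\ref{cyz}) and (\ref{cy2z}) follow by specializing to $n=1,2$ and computing $\Phi_{0,1}$ and $\Phi_{1,1}$ either from (\ref{ww}) or, more simply, by reading them off the defining identity (\ref{simp}) through the three-term recurrence (\ref{3trH}). The main obstacle is bookkeeping: carefully swapping the double summation and integral, correctly applying the linearization formula inside an expansion indexed by $k$, and recognizing the emergent structure as an instance of (\ref{simp}) so that the partition sum in the denominator is annihilated.
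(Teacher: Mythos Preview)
Your argument is correct and is, in all likelihood, the intended one: the paper does not supply a proof of Theorem~\ref{c2d} here (it quotes the result from \cite{Szab19}), but the derivation you outline is exactly the one supported by the machinery the paper has assembled---Theorem~\ref{margin}(ii)--(iii) for the form of $f_{YZ}$ and $f_Z$, the Poisson--Mehler expansion (\ref{P-M}), the linearization (\ref{lihh}), the martingale property $\int H_m f_{CN}\,dy=\rho^m H_m$, and finally the diagonal identity (\ref{simp}) which produces the $\Phi_{s,n-s}$'s and simultaneously cancels the normalising factor $f_R(z|r,q)$.

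One small comment on the last step: when you specialise to $n=1,2$, it is safer to compute $\Phi_{0,1}$, $\Phi_{0,2}$, $\Phi_{1,1}$ directly from the defining relation (\ref{simp}) via the recurrence (\ref{3trH}), as you suggest, rather than from the closed form (\ref{ww}). For example, (\ref{simp}) together with $H_{i+1}=xH_i-[i]_qH_{i-1}$ gives $(1+r)\sum_i\frac{r^i}{[i]_q!}H_iH_{i+1}=x\sum_i\frac{r^i}{[i]_q!}H_i^2$, so $\Phi_{0,1}(z|r,q)=z/(1+r)$ and (\ref{cyz}) follows immediately; the computation for $n=2$ is analogous. Relying on (\ref{ww}) here requires some care with the normalisation of $R_n$.
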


\begin{theorem}
\label{2na1}$E(H_{n}(X|q)|Y=y,Z=z)$ assumes one of the following equivalent forms:

i)
\begin{gather}
E(H_{n}(X|q)|Y=y,Z=z)\label{1na2_1}\\
=\sum_{s=0}^{n}%
%TCIMACRO{\QATOPD{[}{]}{n}{s}}%
%BeginExpansion
\genfrac{[}{]}{0pt}{}{n}{s}%
%EndExpansion
_{q}\rho_{12}^{n-s}\rho_{13}^{s}\left(  \rho_{12}^{2}\right)  _{s}%
H_{n-s}\left(  y|q\right)  P_{s}\left(  z|y,\rho_{12}\rho_{13},q\right)
/(\rho_{12}^{2}\rho_{13}^{2})_{s}\text{,}\nonumber
\end{gather}
where $\left\{  P_{s}\left(  z|y,\rho_{1}\rho_{2},q\right)  \right\}
_{s\geq-1}$ constitute the Al-Salam--Chihara polynomials with new parameters
defined by the three-term recurrence (\ref{3Pn}).

ii)
\begin{gather}
E(H_{n}(X|q)|Y=y,Z=z)=\frac{1}{\left(  \rho_{12}^{2}\rho_{13}^{2}\right)
_{n}}\sum_{k=0}^{\left\lfloor n/2\right\rfloor }(-1)^{k}q^{\binom{k}{2}}%
%TCIMACRO{\QATOPD{[}{]}{n}{2k}}%
%BeginExpansion
\genfrac{[}{]}{0pt}{}{n}{2k}%
%EndExpansion
_{q}%
%TCIMACRO{\QATOPD{[}{]}{2k}{k}}%
%BeginExpansion
\genfrac{[}{]}{0pt}{}{2k}{k}%
%EndExpansion
_{q}\left[  k\right]  _{q}!\rho_{13}^{2k}\rho_{12}^{2k}\label{1na2_2}\\
\times\left(  \rho_{12}^{2},\rho_{13}^{2}\right)  _{k}\sum_{j=0}^{n-2k}%
%TCIMACRO{\QATOPD{[}{]}{n-2k}{j}}%
%BeginExpansion
\genfrac{[}{]}{0pt}{}{n-2k}{j}%
%EndExpansion
_{q}\left(  \rho_{12}^{2}q^{k}\right)  _{j}\left(  \rho_{13}^{2}q^{k}\right)
_{n-2k-j}\rho_{12}^{n-2k-j}\rho_{13}^{j}H_{j}\left(  z|q\right)
H_{n-2k-j}(y|q)\text{.}\nonumber
\end{gather}
iii) $E(P_{n}(X|y,\rho_{12},q)|Y=y,Z=z)=\frac{\rho_{13}^{n}(\rho_{12}^{2}%
)_{n}}{(\rho_{12}^{2}\rho_{13}^{2})_{n}}P_{n}(z|y,\rho_{12}\rho_{13},q)$.

In particular, we have
\begin{equation}
E(X|Y=y,Z=z)=\frac{y\rho_{12}(1-\rho_{13}^{2})+z\rho_{13}(1-\rho_{12}^{2}%
)}{1-\rho_{12}^{2}\rho_{13}^{2}}\text{.} \label{ex}%
\end{equation}

\end{theorem}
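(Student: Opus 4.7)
The plan is to first identify the conditional density of $X$ given $Y=y,Z=z$ as the modified Askey--Wilson density $f_{C2N}$, then to evaluate $E(P_{j}(X|y,\rho _{12},q)|Y=y,Z=z)$ to obtain form (iii), and finally to deduce (i) and (ii) by suitable expansions. First I would take the explicit formula for $f_{3D}$ from the definition preceding Theorem \ref{margin} together with the two-dimensional marginal $f_{YZ}$ given in Theorem \ref{margin}(ii), and form the ratio $f_{X|Y,Z}(x|y,z)=f_{3D}(x,y,z)/f_{YZ}(y,z)$. Using the reversibility relation $f_{CN}(u|v,\rho ,q)f_{N}(v|q)=f_{CN}(v|u,\rho ,q)f_{N}(u|q)$ (which is immediate from (\ref{fCN}) and the symmetry of $W$ in its first two arguments) to rewrite the factor $f_{CN}(z|x,\rho _{13},q)$ appearing in $f_{3D}$, a short cancellation identifies this ratio with $f_{C2N}(x|y,\rho _{12},z,\rho _{13},q)$, so the required conditional expectation is $\int H_{n}(x|q)\,f_{C2N}(x|y,\rho _{12},z,\rho _{13},q)\,dx$.

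To establish (iii), I would compute $I_{j}:=\int P_{j}(x|y,\rho _{12},q)\,f_{C2N}(x|y,\rho _{12},z,\rho _{13},q)\,dx$ by first writing, via the same reversibility applied to $f_{CN}(y|x,\rho _{12},q)$,
\[
f_{C2N}(x|y,\rho _{12},z,\rho _{13},q)=\frac{f_{N}(y|q)}{f_{CN}(y|z,\rho _{12}\rho _{13},q)}\cdot \frac{f_{CN}(x|y,\rho _{12},q)\,f_{CN}(x|z,\rho _{13},q)}{f_{N}(x|q)},
\]
and then applying the Poisson--Mehler formula (\ref{P-M}) to $f_{CN}(x|z,\rho _{13},q)/f_{N}(x|q)$. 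This reduces $I_{j}$ to a $k$-series whose $k$-th term is a constant multiple of $\int P_{j}(x|y,\rho _{12},q)H_{k}(x|q)f_{CN}(x|y,\rho _{12},q)\,dx$; expanding $H_{k}(x|q)$ via (\ref{HnaP}) in the basis $\{P_{i}(\cdot |y,\rho _{12},q)\}$ and invoking the orthogonality (\ref{PnPm}) collapses the inner integral to $\QATOPD[ ]{k}{j}_{q}\rho _{12}^{k-j}H_{k-j}(y|q)[j]_{q}!(\rho _{12}^{2})_{j}$, which vanishes for $k<j$. After reindexing $k'=k-j$, the remaining series is exactly the function $\gamma _{0,j}(y,z|\rho _{12}\rho _{13},q)$ of Lemma \ref{UogCarl}. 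Using the factorisation $\gamma _{0,j}=\gamma _{0,0}\Xi _{0,j}$ together with the Poisson--Mehler identity $\gamma _{0,0}(y,z|\rho _{12}\rho _{13},q)=f_{CN}(y|z,\rho _{12}\rho _{13},q)/f_{N}(y|q)$, all the $f_{N}$ and $f_{CN}$ prefactors cancel and one obtains $I_{j}=\rho _{13}^{j}(\rho _{12}^{2})_{j}\,\Xi _{0,j}(y,z|\rho _{12}\rho _{13},q)$. Finally, the identification $\Xi _{0,j}(y,z|\rho ,q)=P_{j}(z|y,\rho ,q)/(\rho ^{2})_{j}$ follows by comparing the explicit formula for $\Xi _{0,j}$ given in Lemma \ref{UogCarl}(i) with the inversion identity (\ref{odwrocenie}) after swapping $y\leftrightarrow z$; this establishes (iii).

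Form (i) is then immediate: apply (\ref{HnaP}) with $\rho =\rho _{12}$ to write $H_{n}(x|q)=\sum_{j=0}^{n}\QATOPD[ ]{n}{j}_{q}\rho _{12}^{n-j}H_{n-j}(y|q)P_{j}(x|y,\rho _{12},q)$, and integrate term by term using (iii). To pass from (i) to (ii), I would further expand $P_{j}(z|y,\rho _{12}\rho _{13},q)$ in $q$-Hermite polynomials in $z$ by applying (\ref{PnaH}), then replace each $B_{j-l}(y|q)$ by its $H$-expansion coming from (\ref{bnah}); a change in the order of summation combined with standard $q$-binomial bookkeeping recovers the displayed double sum. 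The particular expression (\ref{ex}) for $E(X|Y=y,Z=z)$ is the instance $n=1$ of (i), since $P_{1}(z|y,\rho ,q)=z-\rho y$ by (\ref{3Pn}).

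The main obstacle will be the resumming step: one must recognise the $k$-series produced by Poisson--Mehler as $\gamma _{0,j}(y,z|\rho _{12}\rho _{13},q)$ and then see that the quotient $\Xi _{0,j}=\gamma _{0,j}/\gamma _{0,0}$ collapses, via the reversal identity (\ref{odwrocenie}), to the single modified Al-Salam--Chihara polynomial $P_{j}(z|y,\rho _{12}\rho _{13},q)/(\rho _{12}^{2}\rho _{13}^{2})_{j}$. Once this collapse is established, all three equivalent forms fall out and the combinatorial passage from (i) to (ii) is tedious but routine.
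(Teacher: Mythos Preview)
Your argument is correct and rests on the same ingredients the paper singles out: the identification of the conditional law as $f_{C2N}$, the Poisson--Mehler expansion (\ref{P-M}), and Lemma~\ref{UogCarl}. The organisation, however, differs from the route the paper's toolkit points to. The paper packages everything through the polynomials $D_{n}$ of Lemma~\ref{UogCarl}: one expands $f_{C2N}$ via Poisson--Mehler and the linearisation formula (\ref{lihh}) to obtain
\[
E(H_{n}(X|q)\mid Y=y,Z=z)=\sum_{m=0}^{n}\QATOPD[ ] {n}{m}_{q}\rho_{12}^{n-m}\rho_{13}^{m}\,\Xi_{n-m,m}(z,y|\rho_{12}\rho_{13},q)=D_{n}(z,y|\rho_{12},\rho_{13},\rho_{12}\rho_{13},q),
\]
so that form (i) is literally formula (\ref{_C}) and form (ii) is the alternative double-sum expression of $D_{n}$ recorded in the Kibble--Slepian theorem; form (iii) then drops out by inverting (\ref{HnaP}) via (\ref{PnaH}) and (\ref{suma BH}).

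You instead go for (iii) first, computing $\int P_{j}(x|y,\rho_{12},q)f_{C2N}\,dx$ by orthogonality, and land on the single quantity $\Xi_{0,j}$ rather than the full combination $D_{n}$. This is a genuine simplification for (iii): your use of the orthogonality (\ref{PnPm}) against $f_{CN}(\cdot|y,\rho_{12},q)$ avoids the linearisation formula and the summation over all $\Xi_{n-m,m}$. A small remark: your appeal to (\ref{odwrocenie}) for the identification $\Xi_{0,j}(y,z|\rho,q)=P_{j}(z|y,\rho,q)/(\rho^{2})_{j}$ is valid, but the quicker route is the symmetry $\Xi_{0,j}(y,z)=\Xi_{j,0}(z,y)$ stated in Lemma~\ref{UogCarl}(i), whose $k=0$ case gives $\Xi_{j,0}(z,y)=P_{j}(z|y,\rho,q)/(\rho^{2})_{j}$ immediately. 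The passage (i)$\Rightarrow$(ii) that you sketch via (\ref{PnaH}) and (\ref{bnah}) is correct but laborious; the paper's $D_{n}$ framework gives both (i) and (ii) simultaneously as two closed forms of the same object.
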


Finally, we have the following corollary.

\begin{corollary}
$\forall n,m\geq0:E(H_{n}(X|q)H_{m}(Y|q)|Z)$ is a polynomial of order at most
$n+m$ of the conditioning random variable $Z$.

In particular, we have:%
\begin{align}
E(XY|Z  &  =z)=z^{2}\frac{\rho_{12}(\rho_{13}^{2}+\rho_{23}^{2}%
)(1-qr)+(1-r)(\rho_{13}\rho_{23}+qr\rho_{12})}{(1+r)(1-qr^{2})}\label{cconv}\\
&  +\frac{\rho_{12}(1-\rho_{13}^{2})(1-\rho_{23}^{2})}{(1-qr^{2})}%
\text{.}\nonumber
\end{align}

\end{corollary}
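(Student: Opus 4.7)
The plan is to prove this by successive conditioning, peeling off $X$ first and then $Y$. By the tower property,
\begin{equation*}
E(H_{n}(X|q)H_{m}(Y|q)\mid Z=z) \;=\; E\bigl(H_{m}(Y|q)\,E(H_{n}(X|q)\mid Y,Z)\,\big|\,Z=z\bigr),
\end{equation*}
so the idea is to substitute the explicit polynomial expression for the inner conditional moment, multiply out against $H_{m}(Y|q)$, and then apply the one-step conditional moment formula to the resulting $Y$-polynomial.

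For the first step I would invoke Theorem~\ref{2na1}, specifically formula (\ref{1na2_2}), which expresses $E(H_{n}(X|q)\mid Y=y,Z=z)$ as a finite double sum of terms of the form $H_{j}(z|q)H_{n-2k-j}(y|q)$ with $2k+j \le n$ and total degree in $(y,z)$ equal to $n-2k \le n$. Multiplying by $H_{m}(y|q)$ reduces, via the linearization formula (\ref{lihh}) rescaled to the $H$-polynomials, namely
\begin{equation*}
H_{m}(y|q)\,H_{r}(y|q)\;=\;\sum_{i=0}^{\min(m,r)}\QATOPD[]{m}{i}_{q}\QATOPD[]{r}{i}_{q}[i]_{q}!\,H_{m+r-2i}(y|q),
\end{equation*}
to a linear combination of terms $H_{j}(z|q)H_{s}(y|q)$ with $j+s \le n+m$. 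Taking conditional expectation given $Z=z$ then turns each $H_{s}(Y|q)$ into a polynomial in $z$ of degree at most $s$ by Theorem~\ref{c2d}, leaving a polynomial in $z$ of degree at most $j+s \le n+m$. Careful degree bookkeeping at this last stage is the one non-trivial point, but nothing more delicate than monitoring the chain of inequalities is needed.

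For the explicit formula for $E(XY\mid Z=z)$, I would specialize to $n=m=1$, where everything becomes transparent. From (\ref{ex}),
\begin{equation*}
E(X\mid Y=y,Z=z)\;=\;\frac{\rho_{12}(1-\rho_{13}^{2})}{1-\rho_{12}^{2}\rho_{13}^{2}}\,y\;+\;\frac{\rho_{13}(1-\rho_{12}^{2})}{1-\rho_{12}^{2}\rho_{13}^{2}}\,z,
\end{equation*}
so tower-property gives
\begin{equation*}
E(XY\mid Z=z)=\frac{\rho_{12}(1-\rho_{13}^{2})}{1-\rho_{12}^{2}\rho_{13}^{2}}\,E(Y^{2}\mid Z=z)+\frac{\rho_{13}(1-\rho_{12}^{2})}{1-\rho_{12}^{2}\rho_{13}^{2}}\,z\,E(Y\mid Z=z).
\end{equation*}
Substituting (\ref{cyz}) and (\ref{cy2z}) for $E(Y\mid Z=z)$ and $E(Y^{2}\mid Z=z)$ and clearing the common denominator $(1+r)(1-qr^{2})(1-\rho_{12}^{2}\rho_{13}^{2})$ yields a rational expression in the $\rho_{ij}$'s and $q$ whose reduction to the advertised form is the main obstacle: it requires using the factorization $1-\rho_{12}^{2}\rho_{13}^{2}=(1-\rho_{12}\rho_{13})(1+\rho_{12}\rho_{13})$ and the identity $r=\rho_{12}\rho_{13}\rho_{23}$ to group the terms proportional to $z^{2}$ into $\rho_{12}(\rho_{13}^{2}+\rho_{23}^{2})(1-qr)+(1-r)(\rho_{13}\rho_{23}+qr\rho_{12})$ and the constant terms into $\rho_{12}(1-\rho_{13}^{2})(1-\rho_{23}^{2})$. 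This is tedious but mechanical; a symbolic algebra check would confirm it and also verify by letting $q \to 1^{-}$ that the result reduces to the classical Gaussian conditional covariance formula, which serves as a useful consistency test.
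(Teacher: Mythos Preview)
Your proposal is correct and follows exactly the route the paper intends: the Corollary is stated without proof immediately after Theorems~\ref{c2d} and~\ref{2na1}, and the implied argument is precisely the tower-property computation you outline, first replacing $E(H_{n}(X|q)\mid Y,Z)$ by the polynomial from Theorem~\ref{2na1}, linearizing against $H_{m}(Y|q)$, and then applying Theorem~\ref{c2d} term by term. Your verification of the constant term via the factorization $1+r^{2}-\rho_{23}^{2}-\rho_{12}^{2}\rho_{13}^{2}=(1-\rho_{23}^{2})(1-\rho_{12}^{2}\rho_{13}^{2})$ is the key algebraic step, and the $z^{2}$ coefficient reduces in the same mechanical way.
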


Maybe it is worth to recall that the conditional densities are the following:%
\begin{gather*}
f_{X|Y,Z}(x|y,z,\rho_{12},\rho_{13},\rho_{23},q)=\frac{f_{3D}(x,y,z|\rho
_{12},\rho_{13},\rho_{23},q)}{f_{YZ}(y,z|\rho_{12},\rho_{13},\rho_{23},q)}\\
=\frac{(\rho_{12}^{2})_{\infty}\left(  \rho_{13}^{2}\right)  _{\infty}}%
{(\rho_{12}^{2}\rho_{13}^{2})_{\infty}}f_{X}(x|q)\frac{W(y,z|\rho_{12}%
\rho_{13},q)}{W(x,y|\rho_{12},q)W(x,z|\rho_{13},q)}\\
=\frac{f_{CN}(x|y,\rho_{12},q)f_{CN}(z|x,\rho_{13},q)}{f_{CN}(y|z,\rho
_{12}\rho_{13},q)}=f_{C2N}(x|y,\rho_{12},z,\rho_{13},q)\text{.}%
\end{gather*}
In other words, we have
\[
E(A_{m}(X|y,\rho_{12},z,\rho_{2},q)A_{n}\left(  X|y,\rho_{12},z,\rho
_{2},q\right)  |Y=y,Z=z)=0,
\]
for all $n,m\geq1,$ $n\neq m,$ where $A(x|y,\rho_{1},z,\rho_{2})$ is the AW
polynomial with parameters $y,\rho_{1},z,\rho_{2}$.

Similarly, as far as the other conditional densities are concerned, we have%
\begin{align*}
f_{XY|Z}(x,y|z,\rho_{12},\rho_{13},\rho_{23},q)  &  =\frac{f_{3D}%
(x,y,z|\rho_{12},\rho_{13},\rho_{23},q)}{\allowbreak f_{R}(z|r,q)}\text{,}\\
f_{X|Y}(x|y,\rho_{12},\rho_{13},\rho_{23},q)  &  =\frac{f_{XY}(x|y,\rho
_{12},\rho_{13}\rho_{23},q)}{f_{Y}(y|\rho_{12}\rho_{13}\rho_{23},q)}\text{.}%
\end{align*}

\subsection{$q-$Wiener stochastic process}

It is known that the Wiener process $(Y_{t})_{t\geq0}$ is a stochastic process
defined on $[0,\infty)$ that has i) independent increments, ii) Gaussian
marginal distributions $N(0,\sigma^{2}t)$, iii)
\begin{align}
E\left(  t^{n/2}H_{n}\left(  _{n}\frac{Y_{t}}{\sigma\sqrt{t}}\right)
|\mathcal{F}_{\leq s}^{Y}\right)  \allowbreak &  =\allowbreak s^{n/2}%
H_{n}\left(  \frac{Y_{s}}{\sigma\sqrt{s}}\right)  \text{,}\label{marG}\\
E\left(  s^{-n/2}H_{n}\left(  \frac{Y_{s}}{\sigma\sqrt{s}}\right)
|\mathcal{F}_{\geq t}^{Y}\right)   &  =t^{-n/2}H_{n}\left(  \frac{Y_{t}%
}{\sigma\sqrt{t}}\right)  \label{revmartG}%
\end{align}
a.s. for all $n\in\mathbb{N}$, $0\leq s<t$. Here $H_{n}$ denotes the Hermite
polynomial defined (\ref{_1}) and $\mathcal{F}_{\leq s}^{Y}$, $\mathcal{F}%
_{\geq t}^{Y}$ denote the $\sigma-$fields generated by the process $\left(
Y_{\tau}\right)  $ for $\tau\leq s$ or $\tau\geq t$ i.e., so to say, the past
or the future of the process $\left(  Y_{t}\right)  $ that happen before the
moment $s$ or past the moment $t$. Properties (\ref{marG}) and (\ref{revmartG}%
) are sometimes expressed in the following form. Namely, that the families
(indexed by $n$) of processes $\left\{  t^{n/2}H_{n}\left(  \frac{Y_{t}%
}{\sigma\sqrt{t}}\right)  \right\}  _{t\geq0}$ and $\left\{  s^{-n/2}%
H_{n}\left(  \frac{Y_{s}}{\sigma\sqrt{s}}\right)  \right\}  _{s\geq0}$ are
respectively called martingales and reversed martingales.

To simplify the further description let us assume $\sigma=1$. Now, in
\cite{Szab-OU-W}, for every $\left\vert q\right\vert <1,$ there has been
defined Markov stochastic process $\left(  X_{t}\right)  _{t\geq0}$ such that:

i) each of its marginal distributions, say, $X_{t}$ has density $\frac
{1}{\sqrt{\tau}}f_{N}\left(  \frac{x}{\sqrt{\tau}}|q\right)  ,$

ii) for every $n\geq1$ the following stochastic processes $\left\{
t^{n/2}H_{n}\left(  \frac{Y_{t}}{\sqrt{t}}|q\right)  \right\}  _{t\geq0}$ and
$\left\{  s^{-n/2}H_{n}\left(  \frac{Y_{s}}{\sigma\sqrt{s}}|q\right)
\right\}  _{s\geq0}$ are repetitively martingale and reversed martingale.

That is (\ref{marG}) and (\ref{revmartG}) are satisfied where the polynomials
$H_{n}(x)$ are substituted by $H_{n}(x|q)$. For the probabilists, it is
obvious that the increments of this process cannot be independent unless
$q\allowbreak=\allowbreak1$, which is when we deal with the Gaussian case.

In \cite{Szab-OU-W} there has been defined, the other, related process called
$q$-Ornstein--Uhleneck process. Both these processes provide examples of
Markov processes that have all conditional moments of order $n$ being
polynomials of the condition of the order not exceeding $n$. The theory of
such processes has been developed in the series of papers \cite{SzablPoly},
\cite{SzabStac}, \cite{Szab17}. It should be mentioned that together with the
$q-$Normal distribution the initial idea to define $q-$Wiener process can be
traced to the pioneering works of Bo\.{z}ejko et al. \cite{Bo} in 1997, W.
Bryc \cite{bryc1} in 2001, later developed in \cite{BryWes10} and
\cite{BryMaWe}.

\subsection{Generalization of Kibble--Slepian formula}

Recall that Kibble in 1949 \cite{Kibble} and independently Slepian in 1972
\cite{Slepian72} extended the Mehler's formula to higher dimensions. Namely,
they expanded the ratio of the standardized multidimensional Gaussian density
divided by the product of one-dimensional marginal densities in the multiple
sums involving only constants (correlation coefficients) and the Hermite
polynomials. The formula, in its generality, can be found in \cite{IA} (4.7.2
p.107). Since we are going to generalize its $3$-dimensional version, only
this version will be presented here.

Namely, let us consider $3$ dimensional density $f_{3D}\left(  x_{1}%
,x_{2},x_{3}|\rho_{12},\rho_{13},\rho_{23}\right)  $ of Normal random vector
\[
N\left(  \left[
\begin{array}
[c]{c}%
0\\
0\\
0
\end{array}
\right]  ,\left[
\begin{array}
[c]{ccc}%
1 & \rho_{12} & \rho_{13}\\
\rho_{12} & 1 & \rho_{23}\\
\rho_{13} & \rho_{23} & 1
\end{array}
\right]  \right)  .
\]
Of course, we must assume that the parameters $\rho_{12},$ $\rho_{13},$
$\rho_{23}$ are such that the variance-covariance matrix is positive definite
i.e., such that $\left\vert \rho_{ij}\right\vert <1,$ $i,j=1,2,3,$ $i\neq j$
and%
\begin{equation}
1+2\rho_{12}\rho_{13}\rho_{23}-\rho_{12}^{2}-\rho_{13}^{2}-\rho_{23}^{2}>0.
\label{dod}%
\end{equation}
Then, the Kibble--Slepian formula reads that%
\begin{align*}
&  \exp\left(  \frac{x_{1}^{2}+x_{2}^{2}+x_{3}^{2}}{2}\right)  f_{3D}\left(
x_{1},x_{2},x_{3}|\rho_{12},\rho_{13},\rho_{23}\right) \\
&  =\sum_{k,m,n=0}^{\infty}\frac{\rho_{12}^{k}\rho_{13}^{m}\rho_{23}^{n}%
}{k!m!n!}H_{k+m}\left(  x_{1}\right)  H_{k+n}\left(  x_{2}\right)
H_{m+n}\left(  x_{3}\right)  .
\end{align*}

Thus, the immediate generalization of this formula would be to substitute the
Hermite polynomials by the $q-$Hermite ones and the factorials by the $q- $factorials.

The question is if such a sum is positive. It turns out that not in general,
i.e., not for all $\rho_{12},$ $\rho_{13},$ $\rho_{23}$ satisfying
(\ref{dod}). Nevertheless, it is interesting to compute the sum
\begin{equation}
\sum_{k,m,n=0}^{\infty}\frac{\rho_{12}^{k}\rho_{13}^{m}\rho_{23}^{n}}{\left[
k\right]  _{q}!\left[  m\right]  _{q}!\left[  n\right]  _{q}!}H_{k+m}\left(
x_{1}|q\right)  H_{k+n}\left(  x_{2}|q\right)  H_{m+n}\left(  x_{3}|q\right)
. \label{suma}%
\end{equation}
For simplicity let us denote this sum by $g\left(  x_{1},x_{2},x_{3}|\rho
_{12},\rho_{13},\rho_{23},q\right)  $.

In \cite{Szab-KS} the following result has been formulated and proved.

\begin{theorem}
Sum $g\left(  x_{1},x_{2},x_{3}|\rho_{12},\rho_{13},\rho_{23},q\right)  $ has
one of the following two equivalent forms:

i)%
\begin{gather}
g\left(  x_{1},x_{2},x_{3}|\rho_{12},\rho_{13},\rho_{23},q\right)
=\frac{\left(  \rho_{13}^{2}\right)  _{\infty}}{\prod_{k=0}^{\infty}%
W_{q}\left(  x_{1},x_{3}|\rho_{13}q^{k}\right)  }\nonumber\\
\times\sum_{s\geq0}\frac{1}{\left[  s\right]  _{q}!}H_{s}\left(
x_{2}|q\right)  D_{s}\left(  x_{1},x_{3}|\rho_{12},\rho_{23},\rho
_{13},q\right)  ,
\end{gather}
where $D_{n}\left(  x_{1},x_{3}|\rho_{12},\rho_{23},\rho_{13},q\right)  $ is
given by either (\ref{_C}) or can be expressed in terms of polynomials $H_{n}$
in the following form:
\begin{gather*}
D_{n}\left(  x_{1},x_{3}|\rho_{12},\rho_{23},\rho_{13},q\right)  =\frac
{1}{\left(  \rho_{13}^{2}\right)  _{n}}\\
\times\sum_{k=0}^{\left\lfloor n/2\right\rfloor }(-1)^{k}q^{\binom{k}{2}}%
%TCIMACRO{\QATOPD{[}{]}{n}{2k}}%
%BeginExpansion
\genfrac{[}{]}{0pt}{}{n}{2k}%
%EndExpansion
_{q}%
%TCIMACRO{\QATOPD{[}{]}{2k}{k}}%
%BeginExpansion
\genfrac{[}{]}{0pt}{}{2k}{k}%
%EndExpansion
_{q}\left[  k\right]  _{q}!\rho_{12}^{k}\rho_{13}^{k}\rho_{23}^{k}\left(
\frac{\rho_{12}\rho_{13}}{\rho_{23}}\right)  _{k}\left(  \frac{\rho_{13}%
\rho_{23}}{\rho_{12}}\right)  _{k}\allowbreak\\
\sum_{j=0}^{n-2k}%
%TCIMACRO{\QATOPD{[}{]}{n-2k}{j}}%
%BeginExpansion
\genfrac{[}{]}{0pt}{}{n-2k}{j}%
%EndExpansion
_{q}\rho_{23}^{j}\left(  \frac{\rho_{12}\rho_{13}}{\rho_{23}}q^{k}\right)
_{k}\rho_{12}^{n-j-2k}\left(  \frac{\rho_{13}\rho_{23}}{\rho_{12}}%
q^{k}\right)  _{n-2k-j}H_{j}\left(  x_{1}|q\right)  H_{n-2k-j}\left(
x_{3}|q\right)  ,
\end{gather*}
similarly for other pairs $\left(  1,3\right)  $ and $\left(  2,3\right)  ,$

ii)
\begin{gather}
g\left(  x_{1},x_{2},x_{3}|\rho_{12},\rho_{13},\rho_{23},q\right)
=\frac{\left(  \rho_{13}^{2},\rho_{23}^{2}\right)  _{\infty}}{\prod
_{k=0}^{\infty}W_{q}\left(  x_{1},x_{3}|\rho_{13}q^{k}\right)  W_{q}%
(x_{3},x_{2}|\rho_{23}q^{k})}\label{exp_in_ASC}\\
\times\sum_{s=0}^{\infty}\frac{\rho_{12}^{s}\left(  \rho_{13}\rho_{23}%
/\rho_{12}\right)  _{s}}{\left[  s\right]  _{q}!\left(  \rho_{13}^{2}\right)
_{s}\left(  \rho_{23}^{2}\right)  _{s}}P_{s}\left(  x_{1}|x_{3},\rho
_{13},q\right)  P_{s}\left(  x_{2}|x_{3},\rho_{23},q\right)  ,\nonumber
\end{gather}
similarly for other pairs $\left(  1,3\right)  $ and $\left(  2,3\right)  $.
\end{theorem}

Unfortunately, as shown in \cite{Szab-KS}, one can find such $\rho_{12},$
$\rho_{13},$ $\rho_{23}$ that function the $g$ with these parameters assumes
negative values for some $x_{j}\in S\left(  q\right)  $, $j\allowbreak
=\allowbreak1,2,3$; hence consequently, $g\left(  x_{1},x_{2},x_{3}|\rho
_{12},\rho_{13},\rho_{23},q\right)  \prod_{j=0}^{3}f_{N}\left(  x_{j}%
|q\right)  $ with these values of parameters is not a density of a probability distribution.

\section{Infinite expansions\label{nieskon}}

\subsection{Kernels}

In the literature, there is a small confusion concerning terminology.
Sometimes the expression of the form
\[
\sum_{n\geq0}r_{n}A_{n}\left(  x\right)  B_{n}\left(  y\right)  ,
\]
where $\left\{  A_{n}\right\}  $ and $\left\{  B_{n}\right\}  $ are the
families of polynomials, is called a kernel (like in \cite{suslov96})) or even
sometimes 'bilinear generating function' (see e.g., \cite{Rahman97})) or also
a Poisson kernel. In general, if $A_{n}$ and $B_{n}$ are different we talk
about non-symmetric kernels, if $A_{n}(x)\allowbreak=\allowbreak B_{n}(x)$
then the kernels are called symmetric or simply kernels.

The process of expressing these sums in a closed-form is then called 'summing
of kernels'.

Summing the kernel expansions is, in general, a difficult thing to do. The
proving positivity of the kernels is another difficult problem. Only some are
known and have relatively simple forms. In most cases, sums are in the form of
a complex finite sum of the so-called basic hypergeometric functions.

If such a kernel is nonnegative for all $x$ and $y$ that belong to the
supports of measures $\mu$ and $\nu$ that make orthogonal the following
families of polynomials respectively $\left\{  p_{n}\right\}  $ and $\left\{
q_{n}\right\}  ,$ then such a kernel is called (at least among probabilists) a
Lancaster kernel.

Here, below we have another, the probabilistic application of $q-$series theory.

Properties and applications in the theory of probability of such kernels were
described in the series of papers of H.G. Lancaster \cite{Lancaster58},
\cite{Lancaster63(1)}, \cite{Lancaster63(2)}, \cite{Lancaster75}.

If we do not deal with Lancaster kernels, then their importance in
applications stems directly from Mercer's theorem and the theory following it.
Anyway, summing kernels is an important and ambitious task.

In the case of Lancaster kernels, it turns out that in many cases the number
sequence $\left\{  r_{n}\right\}  $ has to be a moment sequence. It is the
necessary condition in case of unbounded supports of the measures $\mu$ and
$\nu$.

We start with the famous Poisson--Mehler expansion of $f_{CN}\left(
x|y,\rho,q\right)  /f_{N}\left(  x|q\right)  $ in an infinite series of
Mercer's type (compare e.g., \cite{Mercier09}). Namely, the following fact is true:

\begin{theorem}
\label{Mehler}For all $\forall\left\vert q\right\vert ,\left\vert
\rho\right\vert <1,~x,y\in S\left(  q\right)  $ we have%
\begin{gather}
\frac{(\rho^{2})_{\infty}}{\prod_{k=0}^{\infty}W_{q}\left(  x,y|\rho
q^{k}\right)  }\label{PM}\\
=\sum_{n=0}^{\infty}\frac{\rho^{n}}{[n]_{q}!}H_{n}(x|q)H_{n}(y|q).\nonumber
\end{gather}

For $q\allowbreak=\allowbreak1,$ $x,y\in\mathbb{R}$ we have%
\begin{equation}
\frac{\exp\left(  \frac{x^{2}+y^{2}}{2}\right)  }{\sqrt{1-\rho^{2}}}%
\exp\left(  -\frac{x^{2}+y^{2}-2\rho xy}{2(1-\rho^{2})}\right)  \allowbreak
=\allowbreak\sum_{n=0}^{\infty}\frac{\rho^{n}}{n!}H_{n}(x)H_{n}(y).
\label{PMq=1}%
\end{equation}

\end{theorem}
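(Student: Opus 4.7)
The plan is to reduce the first identity (\ref{PM}) to the Poisson--Mehler expansion already encoded in the paper via the two expressions for the Al-Salam--Chihara density from Remark~\ref{simpl}, and then transport it through the rescaling (\ref{qH}) together with the specialization to conjugate-pair parameters $(a,b) = (\rho e^{i\phi}, \rho e^{-i\phi})$.

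First I would equate the two formulae (\ref{mehler}) and (\ref{n_form}) for $f_Q(x|a,b,q)$ and divide by $f_h(x|q)$, obtaining
\[
\frac{(ab)_\infty}{\prod_{j=0}^\infty w\!\left(x, \tfrac{a+b}{2\sqrt{ab}}\,\big|\, \sqrt{ab}\,q^j\right)} = \sum_{j\geq 0} \frac{S_j^{(2)}(a,b)}{(q)_j}\, h_j(x|q).
\]
Specializing $a = \rho e^{i\phi}$, $b = \rho e^{-i\phi}$, $y = \cos\phi$ gives $\sqrt{ab} = \rho$ and $(a+b)/(2\sqrt{ab}) = y$, while a direct expansion using (\ref{cH}) shows $S_j^{(2)}(a,b) = \rho^j h_j(y|q)$. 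Thus
\[
\frac{(\rho^2)_\infty}{\prod_{j=0}^\infty w(x,y|\rho q^j)} = \sum_{j\geq 0} \frac{\rho^j}{(q)_j}\, h_j(x|q)\, h_j(y|q),
\]
which is the Poisson--Mehler identity for the unscaled $q$-Hermite polynomials. Substituting $x \mapsto x\sqrt{1-q}/2$ and $y \mapsto y\sqrt{1-q}/2$ and invoking (\ref{qH}) converts each $h_j$ into $(1-q)^{j/2} H_j$; combining this with $(1-q)^j/(q)_j = 1/[j]_q!$ and the definition (\ref{WW}) of $W_q$ yields exactly (\ref{PM}).

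For the $q = 1$ case (\ref{PMq=1}) I would pass to the limit $q \to 1^-$ in (\ref{PM}). The termwise limits $[n]_q! \to n!$ and $H_n(x|q) \to H_n(x)$ are classical, while the left-hand side is $f_{CN}(x|y,\rho,q)/f_N(x|q)$, whose limit is given explicitly by (\ref{limfcn}) divided by $f_N(x|1) = e^{-x^2/2}/\sqrt{2\pi}$, producing the factor displayed on the left of (\ref{PMq=1}). Uniform summability of $\sum_n |\rho|^n |H_n(x|q) H_n(y|q)|/[n]_q!$ on compact neighborhoods of $(q,\rho) \in (-1,1] \times \{|\rho| \leq \rho_0\}$ with $\rho_0 < 1$ validates the termwise passage to the limit. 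As an independent alternative, (\ref{PMq=1}) follows from the generating function (\ref{_gH}) combined with Hermite orthogonality against $f_N(x|1)$.

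The main obstacle in this plan is purely bookkeeping: verifying that the powers of $\sqrt{1-q}/2$, the two different $q$-factorial normalizations $(q)_j$ versus $[j]_q!$, and the product defining $W_q$ align correctly under the rescaling so that both sides match coefficient by coefficient. The substantive analytic content --- convergence of the kernel series and its closed form --- is already in the paper through (\ref{mehler}) and (\ref{n_form}), so Theorem~\ref{Mehler} is essentially a translation of those identities into the probabilistic $(y,\rho,q)$ parametrization together with a routine limiting argument at $q=1$.
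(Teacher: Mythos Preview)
Your proposal is correct and follows precisely the route the paper itself singles out as ``the simplest'': equate the two expressions (\ref{mehler}) and (\ref{n_form}) for $f_Q$, specialize the parameters to the conjugate pair (\ref{n_param}), and then rescale via (\ref{qH}) to pass from $h_n$ to $H_n$. The paper also mentions an alternative through the connection coefficients (\ref{PnaH}), but your argument matches its preferred derivation essentially step for step, including the handling of the $q=1$ limit.
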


\begin{proof}
There exist many proofs of both formulae (see e.g., \cite{IA}, \cite{bressoud}%
). One of the shortest, exploiting connection coefficients, given in
(\ref{PnaH}) is given in \cite{Szab4}. The simplest seems to be based on
(\ref{Mehler}), (\ref{n_form}) and then introducing parameters \ref{n_param}.
\end{proof}

\begin{corollary}
For all $\forall\left\vert q\right\vert ,\left\vert \rho\right\vert <1,~x\in
S\left(  q\right)  $ we have%
\[
\sum_{k\geq0}\frac{\rho^{k}\left(  \rho q^{k+1}\right)  _{\infty}}{\left[
k\right]  _{q}!}H_{2k}\left(  x|q\right)  \allowbreak=\allowbreak\frac{\left(
\rho^{2}\right)  _{\infty}}{\left(  \rho\right)  _{\infty}}\prod_{k=0}%
^{\infty}L_{q}^{-1}\left(  x|\rho q^{k}\right)  .
\]

\end{corollary}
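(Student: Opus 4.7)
The plan is to derive this corollary directly from the earlier formula (\ref{genh2}) by applying the rescaling that defines the capitalized polynomials $H_n(x|q)$. The notation $L_q(x|\rho)$ should mean $l(x\sqrt{1-q}/2|\rho)$, in analogy with how the density $f_N$ and the kernel $W_q$ in (\ref{WW}) are obtained from $f_h$ and $w$ by the substitution $x\mapsto x\sqrt{1-q}/2$; this is the only sensible reading consistent with the capitalization convention and the fact that $x\in S(q)$ on the left-hand side.

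First, I would recall (\ref{genh2}), which reads
\begin{equation*}
\sum_{k\geq 0}\frac{\beta^{k}}{(q)_{k}(\beta)_{k+1}}h_{2k}(x|q)=\frac{(\beta^{2})_{\infty}}{(\beta)_{\infty}^{2}\prod_{j=0}^{\infty}l(x|\beta q^{j})}.
\end{equation*}
Next, I would substitute $x\mapsto x\sqrt{1-q}/2$ and $\beta\mapsto\rho$, and then invoke definition (\ref{qH}) to write $h_{2k}(x\sqrt{1-q}/2\,|q)=(1-q)^{k}H_{2k}(x|q)$. Collecting factors, the combination $(1-q)^{k}/(q)_{k}$ becomes $1/[k]_{q}!$, giving
\begin{equation*}
\sum_{k\geq 0}\frac{\rho^{k}}{[k]_{q}!\,(\rho)_{k+1}}H_{2k}(x|q)=\frac{(\rho^{2})_{\infty}}{(\rho)_{\infty}^{2}\prod_{j=0}^{\infty}L_{q}(x|\rho q^{j})}.
\end{equation*}

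The last step is a purely formal manipulation of the $q$-Pochhammer symbol: one uses the identity $(\rho)_{\infty}=(\rho)_{k+1}(\rho q^{k+1})_{\infty}$, so that $1/(\rho)_{k+1}=(\rho q^{k+1})_{\infty}/(\rho)_{\infty}$. Multiplying both sides of the previous display by $(\rho)_{\infty}$ yields exactly the claimed identity.

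There is essentially no obstacle here—every ingredient is already in place in the excerpt. The only point that requires a brief remark is justifying the interpretation $L_{q}(x|\rho)=l(x\sqrt{1-q}/2|\rho)$, which I would dispose of at the outset by pointing to the parallel rescaling convention used throughout Part \ref{prob} (notably for $f_N$, $W_q$ in (\ref{WW}) and $f_R$ in (\ref{fR})). Convergence of the series and the infinite product is automatic for $|q|,|\rho|<1$ and $x\in S(q)$ because (\ref{genh2}) already converges absolutely in the stated range, and the rescaling does not affect this.
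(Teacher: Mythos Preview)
Your argument is correct. It is, however, not the route the paper takes. The paper's proof specializes the Poisson--Mehler expansion (\ref{PM}) to $y=x$, expands each $H_n(x|q)^2$ via the linearization formula (\ref{lihh}) into a sum of $H_{2k}$'s, swaps the order of summation, and then collapses the inner sum using the identities $1/(\rho)_{j+1}=\sum_{k\geq 0}\QATOPD[ ] {j+k}{k}_{q}\rho^{k}$ and $(\rho)_{\infty}/(\rho)_{j+1}=(\rho q^{j+1})_{\infty}$. Your derivation bypasses all of this by simply rescaling the already-established identity (\ref{genh2}) and applying the single Pochhammer split $(\rho)_{\infty}=(\rho)_{k+1}(\rho q^{k+1})_{\infty}$. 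In fact the paper itself, in the remark immediately following its proof, notes that the corollary ``has been obtained by other means in the first section compare (\ref{genh2})''---so you have found precisely the shortcut the author had in mind. Your approach is shorter and avoids the linearization step; the paper's approach has the minor advantage of making the corollary self-contained within Part~\ref{prob}, deriving it directly from the Poisson--Mehler kernel rather than importing (\ref{genh2}) from Part~\ref{sup[-1,1]}.
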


\begin{proof}
We put $y\allowbreak=\allowbreak x$ in (\ref{PM}), then we apply a modified
version of (\ref{lihh}), change the order of summation and finally apply
formulae $\frac{1}{\left(  \rho\right)  _{j+1}}\allowbreak=\allowbreak
\sum_{k\geq0}%
%TCIMACRO{\QATOPD{[}{]}{j+k}{k}}%
%BeginExpansion
\genfrac{[}{]}{0pt}{}{j+k}{k}%
%EndExpansion
_{q}\rho^{k}$ and $\frac{\left(  \rho\right)  _{\infty}}{\left(  \rho\right)
_{j+1}}\allowbreak=\allowbreak\left(  q^{j+1}\rho\right)  _{\infty}$.
\end{proof}

\begin{remark}
This formula has been obtained by other means in the first section compare
with (\ref{genh2})
\end{remark}

We will call the expression of the form on the right-hand side of (\ref{PM})
the kernel expansion, while the expressions from the left-hand side of
(\ref{PM}) kernels. The name refers to Mercer's theorem and the fact that for
example
\[
\int_{S\left(  q\right)  }k\left(  x,y|\rho,q\right)  H_{n}\left(  x|q\right)
f_{N}\left(  x|q\right)  dx=\rho^{n}H_{n}\left(  y|q\right)  f_{N}\left(
y|q\right)  ,
\]
where we denoted by $k\left(  x,y|\rho,q\right)  $ the left-hand side of
(\ref{PM}). Hence we see that $k$ is a kernel, while functions $H_{n}\left(
x|q\right)  f_{N}\left(  x|q\right)  $ are eigenfunctions of the kernel $k$
with $\rho^{n}$ being an eigenvalue related to an eigenfunction $H_{n}\left(
x|q\right)  f_{N}\left(  x|q\right)  $. Such kernels and kernel expansions are
very important in the analysis or in quantum physics in the analysis of
different models of harmonic oscillators.

Below we will present several of them, mostly the ones involving the big
$q-$Hermite, Al-Salam--Chihara and $q-$ultraspherical polynomials.

To present more complicated sums, we will need the following definition of the
basic hypergeometric function, namely
\begin{equation}
_{j}\phi_{k}\left[
\begin{array}
[c]{cccc}%
a_{1} & a_{2} & \ldots & a_{j}\\
b_{1} & b_{2} & \ldots & b_{k}%
\end{array}
;q,x\right]  =\sum_{n=0}^{\infty}\frac{\left(  a_{1},\ldots,a_{j}\right)
_{n}}{\left(  q,b_{1},\ldots,b_{k}\right)  _{n}}\left(  \left(  -1\right)
^{n}q^{\binom{n}{2}}\right)  ^{1+k-j}x^{n}, \label{jfk}%
\end{equation}

\begin{gather}
_{2m}W_{2m-1}\left(  a,a_{1},\ldots,a_{2m-3};q,x\right)  =\label{2mW2m-1}\\
_{2m}\phi_{2m-1}\left[
\begin{array}
[c]{ccccccc}%
a & q\sqrt{a} & -q\sqrt{a} & a_{1} & a_{2} & \ldots & a_{2m-3}\\
\sqrt{a} & -\sqrt{a} & \frac{qa}{a_{1}} & \frac{qa}{a_{2}} & \ldots &
\frac{qa}{a_{2m-3}} &
\end{array}
;q,x\right]  .\nonumber
\end{gather}

We will present now the kernels built of families of polynomials that are
discussed here and their sums.

\begin{theorem}
\label{Kernels}i) For all $\left\vert t\right\vert <1,\left\vert x\right\vert
,\left\vert y\right\vert <2:$
\[
\sum_{n=0}^{\infty}t^{n}U_{n}\left(  x/2\right)  U_{n}\left(  y/2\right)
\allowbreak=\allowbreak\frac{\left(  1-t^{2}\right)  }{\left(  \left(
1-t^{2}\right)  ^{2}-t\left(  1+t^{2}\right)  xy+t^{2}(x^{2}+y^{2})\right)
}.
\]

ii) For all $\left\vert t\right\vert <1,\left\vert x\right\vert ,\left\vert
y\right\vert <1:$%
\begin{gather*}
\sum_{n=0}^{\infty}\frac{\left(  1-\beta q^{n}\right)  \left(  q\right)  _{n}%
}{\left(  1-\beta\right)  \left(  \beta^{2}\right)  _{n}}t^{n}C_{n}\left(
x|\beta,q\right)  C_{n}\left(  y|\beta,q\right)  \allowbreak=\\
\frac{\left(  \beta q\right)  _{\infty}^{2}}{\left(  \beta^{2}\right)
_{\infty}\left(  \beta t^{2}\right)  _{\infty}}\prod_{n=0}^{\infty}%
\frac{w\left(  x,y|t\beta q^{n}\right)  }{w\left(  x,y|tq^{n}\right)  }%
\times\\
~_{8}W_{7}\left(  \frac{\beta t^{2}}{q},\frac{\beta}{q},te^{i(\theta+\phi
)},te^{-i\left(  \theta+\phi\right)  },te^{i\left(  \theta-\phi\right)
},te^{-i\left(  \theta-\phi\right)  };q,\beta q\right)  ,
\end{gather*}
\newline where $x\allowbreak=\allowbreak\cos\theta,$ $y\allowbreak
=\allowbreak\cos\phi$.

iii) For all $\left\vert x\right\vert ,\left\vert y\right\vert ,\left\vert
t\right\vert ,\left\vert tb/a\right\vert \leq1$:%
\begin{gather}
\sum_{n\geq0}\frac{\left(  tb/a\right)  ^{n}}{\left(  q\right)  _{n}}%
h_{n}\left(  x|a,q\right)  h_{n}\left(  y|b,q\right)  \allowbreak
=\allowbreak\left(  \frac{b^{2}t^{2}}{a^{2}}\right)  _{\infty}\prod
_{k=0}^{\infty}\frac{v\left(  x|tbq^{k}\right)  }{w\left(  x,y|t\frac{b}%
{a}q^{k}\right)  }\times\label{kernel_bigH}\\
_{3}\phi_{2}\left(
\begin{array}
[c]{ccc}%
t & bte^{i\left(  \theta+\phi\right)  }/a & bte^{i\left(  -\theta+\phi\right)
}/a\\
b^{2}t^{2}/a^{2} & bte^{i\phi} &
\end{array}
;q,be^{-i\phi}\right)  ,\nonumber
\end{gather}
with $x\allowbreak=\allowbreak\cos\theta$ and $y\allowbreak=\allowbreak
\cos\phi$.

iv) For all $\left\vert t\right\vert <1,x,y\in S\left(  q\right)
,ab=\alpha\beta:$
\begin{gather*}
\sum_{n\geq0}\frac{\left(  t\alpha/a\right)  ^{n}}{\left(  q\right)
_{n}\left(  ab\right)  _{n}}Q_{n}\left(  x|a,b,q\right)  Q_{n}\left(
y|\alpha,\beta,q\right)  \allowbreak=\allowbreak\\
\frac{\left(  \frac{\alpha^{2}t^{2}}{a},\frac{\alpha^{2}t}{a}e^{i\theta
},be^{-i\theta},bte^{i\theta},\alpha te^{-i\phi},\alpha te^{i\phi}\right)
_{\infty}}{\left(  ab,\frac{\alpha^{2}t^{2}}{a}e^{i\theta}\right)  _{\infty
}\prod_{k=0}^{\infty}w\left(  x,y|\frac{\alpha t}{a}q^{k}\right)  }\times\\
_{8}W_{7}\left(  \frac{\alpha^{2}t^{2}e^{i\theta}}{aq},t,\frac{\alpha t}%
{\beta},ae^{i\theta},\frac{\alpha t}{a}e^{i\left(  \theta+\phi\right)  }%
,\frac{\alpha t}{a}e^{i\left(  \theta-\phi\right)  };q,be^{-i\theta}\right)  ,
\end{gather*}
where as before $x\allowbreak=\allowbreak\cos\theta$ and $y\allowbreak
=\allowbreak\cos\phi$ and \newline%
\begin{gather*}
\sum_{n\geq0}\frac{t^{n}}{\left(  q\right)  _{n}\left(  ab\right)  _{n}}%
Q_{n}\left(  x|a,b,q\right)  Q_{n}\left(  y|\alpha,\beta,q\right)
\allowbreak=\allowbreak\\
\frac{\left(  \frac{\beta t}{a}\right)  _{\infty}}{\left(  \alpha at\right)
_{\infty}}\prod_{k=0}^{\infty}\frac{(1+\alpha^{2}t^{2}q^{2k})^{2}-2\alpha
tq^{k}\left(  x+y\right)  \left(  1+\alpha^{2}t^{2}q^{2k}\right)  +4\alpha
^{2}xyt^{2}q^{2k}}{w\left(  x,y|tq^{k}\right)  }\\
~_{8}W_{7}\left(  \frac{\alpha at}{q},\frac{\alpha t}{b},ae^{i\theta
},ae^{-i\theta},\alpha e^{i\phi},\alpha e^{-i\phi};q;\frac{\beta t}{a}\right)
.
\end{gather*}

v) For all $\left\vert \rho_{1}\right\vert ,\left\vert \rho_{2}\right\vert
,\left\vert q\right\vert <1,$ $x,y\in S\left(  q\right)  $
\begin{equation}
0\leq\sum_{n\geq0}\frac{\rho_{1}^{n}}{\left[  n\right]  _{q}!\left(  \rho
_{2}^{2}\right)  _{n}}P_{n}\left(  x|y,\rho_{2},q\right)  P_{n}\left(
z|y,\frac{\rho_{2}}{\rho_{1}},q\right)  =\frac{\left(  \rho_{1}^{2}\right)
_{\infty}}{\left(  \rho_{2}^{2}\right)  _{\infty}}\prod_{k=0}^{\infty}%
\frac{W_{q}\left(  x,z|\rho_{2}q^{k}\right)  }{W_{q}\left(  x,y|\rho_{1}%
q^{k}\right)  }. \label{kerASC}%
\end{equation}

\end{theorem}

\begin{proof}
[Remarks concerning the proof]i) We set $q\allowbreak=\allowbreak0$ in
(\ref{PM}) and use the fact that $H_{n}(x|0)\allowbreak=\allowbreak
U_{n}(x/2)$. ii) It is formula (1.7) in \cite{Rahman97} based on
\cite{GasRah}. iii) it is formula (14.14) in \cite{suslov96}. iv) these are
formulae (14.5) and (14.8) of \cite{suslov96}. v) Notice that it cannot be
derived from assertion iv) since the condition $ab\allowbreak=\allowbreak
\alpha\beta$ is not satisfied. Recall that (see (\ref{podstawienie}))
$ab\allowbreak=\allowbreak\rho_{2}^{2}$ while $\alpha\beta\allowbreak
=\allowbreak\rho_{1}^{2}$. For the proof, recall the idea of expansion of
ratio of densities presented in \cite{Szab4}, use formulae (\ref{PnaP}) and
(\ref{PnPm}) and finally notice that $f_{CN}\left(  x|y,\rho_{1},q\right)
/f_{CN}\left(  x|z,\rho_{2},q\right)  \allowbreak=\allowbreak\frac{\left(
\rho_{1}^{2}\right)  _{\infty}}{\left(  \rho_{2}^{2}\right)  _{\infty}}%
\prod_{k=0}^{\infty}\frac{W_{q}\left(  x,z|\rho_{2}q^{k}\right)  }%
{W_{q}\left(  x,y|\rho_{1}q^{k}\right)  }$.
\end{proof}

\begin{corollary}
For all $\left\vert a\right\vert >\left\vert b\right\vert ,$ $x,y\in S\left(
q\right)  :$%
\[
0\leq\sum_{n\geq0}\frac{b^{n}}{\left[  n\right]  _{q}!a^{n}}H_{n}\left(
x|a,q\right)  H_{n}\left(  y|b,q\right)  \allowbreak=\allowbreak\left(
\frac{b^{2}}{a^{2}}\right)  _{\infty}\prod_{k=0}^{\infty}\frac{V_{q}\left(
x|bq^{k}\right)  }{W_{q}\left(  x,y|\frac{b}{a}q^{k}\right)  }.
\]

\end{corollary}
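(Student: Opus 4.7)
The plan is to obtain the identity as a clean specialization of part (iii) of Theorem \ref{Kernels} at $t = 1$, followed by the standard probabilistic rescaling of Part \ref{prob}. First, I would set $t = 1$ in (\ref{kernel_bigH}); this is permitted by the standing hypothesis $|tb/a|\le 1$, which in our case reduces to $|a|>|b|$. The key observation is that the basic hypergeometric series ${}_{3}\phi_{2}$ on the right-hand side of (\ref{kernel_bigH}) has $t$ as its first upper parameter, so at $t=1$ the Pochhammer factor $(1;q)_{n}=\prod_{j=0}^{n-1}(1-q^{j})$ vanishes for every $n\ge 1$; consequently the whole series collapses to its $n=0$ term, which equals $1$. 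This yields the streamlined identity
\begin{equation*}
\sum_{n\geq 0}\frac{(b/a)^{n}}{(q)_{n}}h_{n}(x|a,q)h_{n}(y|b,q)=\Bigl(\tfrac{b^{2}}{a^{2}}\Bigr)_{\infty }\prod_{k=0}^{\infty }\frac{v(x|bq^{k})}{w\bigl(x,y|\tfrac{b}{a}q^{k}\bigr)}.
\end{equation*}

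Second, I would translate this identity into the ``probabilistic'' normalization used in Part \ref{prob}. Namely, in analogy with (\ref{qH}), set $H_{n}(x|a,q)=(1-q)^{-n/2}h_{n}(x\sqrt{1-q}/2\,|\,a,q)$, and substitute $x\mapsto x\sqrt{1-q}/2$, $y\mapsto y\sqrt{1-q}/2$ in the preceding display. The relations $h_{n}(x\sqrt{1-q}/2|a,q)=(1-q)^{n/2}H_{n}(x|a,q)$ and $(q)_{n}=(1-q)^{n}[n]_{q}!$ together imply that each factor of $(1-q)^{n}$ coming from the two polynomials exactly cancels the factor $1/(1-q)^{n}$ passing from $1/(q)_{n}$ to $1/[n]_{q}!$, so the left-hand side becomes $\sum_{n}\frac{b^{n}}{a^{n}[n]_{q}!}H_{n}(x|a,q)H_{n}(y|b,q)$ as required. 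On the right-hand side, the rescaling turns $v(x|bq^{k})$ and $w(x,y|(b/a)q^{k})$ into their $S(q)$-versions $V_{q}(x|bq^{k})$ and $W_{q}(x,y|(b/a)q^{k})$ that appear in the statement.

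For the nonnegativity, note that $(b^{2}/a^{2})_{\infty }>0$ because $|b/a|<1$, while $V_{q}(x|t)=v(x\sqrt{1-q}/2|t)=|1-te^{i\theta }|^{2}$ with $\cos \theta =x\sqrt{1-q}/2$ is a square of a modulus, and analogously $W_{q}(x,y|r)=w(x\sqrt{1-q}/2,y\sqrt{1-q}/2|r)=|1-re^{i(\theta +\phi )}|^{2}|1-re^{i(\theta -\phi )}|^{2}\ge 0$ for $x,y\in S(q)$. Hence the product on the right-hand side is nonnegative, which forces the sum on the left-hand side to be nonnegative as well.

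The only nontrivial step is recognising that the ${}_{3}\phi_{2}$ in (\ref{kernel_bigH}) degenerates to $1$ at $t=1$; everything else is bookkeeping with the rescaling constants $(1-q)$ and the interpretation of $V_{q}$ and $W_{q}$ as the one-factor versions of the infinite products (\ref{rozklv}) and (\ref{rozklw}) evaluated in the $S(q)$-variables. No new connection coefficient identities are needed.
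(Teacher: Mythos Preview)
Your approach is correct and is essentially the same as the paper's: specialize the big $q$-Hermite kernel (\ref{kernel_bigH}) so that the ${}_{3}\phi_{2}$ collapses, then rescale to the $S(q)$-normalization. Your identification of the specialization as $t=1$ is the right one; the paper's proof in fact writes ``set $t=0$'', which is evidently a slip, since $t=0$ reduces (\ref{kernel_bigH}) to the triviality $1=1$, whereas $t=1$ forces the first numerator parameter of the ${}_{3}\phi_{2}$ to be $1$ and hence $(1;q)_{n}=0$ for $n\ge 1$, exactly as you argue. Your proof is therefore not merely aligned with the paper's, but also corrects a small misprint and spells out the nonnegativity argument that the paper leaves implicit.
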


\begin{proof}
We set $t\allowbreak=\allowbreak0$ in (\ref{kernel_bigH}) and assume
$\left\vert b\right\vert <\left\vert a\right\vert $. For an alternative simple
proof see \cite{SzablKer}.
\end{proof}

\subsection{Expansions of kernel's reciprocals}

We have the following infinite expansions:

\begin{theorem}
i) For $\left\vert q\right\vert ,\left\vert \rho\right\vert <1,x,y\in S\left(
q\right)  :$%
\[
1/\sum_{n=0}^{\infty}\frac{\rho^{n}}{\left[  n\right]  _{q}!}H_{n}\left(
x|q\right)  H_{n}\left(  y|q\right)  \allowbreak=\allowbreak\sum_{n=0}%
^{\infty}\frac{\rho^{n}}{\left(  \rho^{2}\right)  _{n}[n]_{q}!}B_{n}\left(
y|q\right)  P_{n}\left(  x|y,\rho,q\right)  .
\]

ii) For $x,y\in\mathbb{R}$ and $\rho^{2}<1/2$%
\[
1/\sum_{n=0}^{\infty}\frac{\rho^{n}}{n!}H_{n}\left(  x\right)  H_{n}\left(
y\right)  \allowbreak=\allowbreak\sum_{n=0}^{\infty}\frac{\rho^{n}i^{n}%
}{n!\left(  1-\rho^{2}\right)  ^{n/2}}H_{n}\left(  ix\right)  H_{n}\left(
\frac{(x-\rho y)}{\sqrt{1-\rho^{2}}}\right)  .
\]

iii) For $\left\vert q\right\vert <1,\left\vert a\right\vert <\left\vert
b\right\vert ,x,y\in S\left(  q\right)  :$
\[
1/\sum_{n\geq0}\frac{a^{n}}{\left[  n\right]  _{q}!b^{n}}H_{n}\left(
x|a,q\right)  H_{n}\left(  y|b,q\right)  =\sum_{n\geq0}\frac{a^{n}}{\left[
n\right]  _{q}!b^{n}\left(  a^{2}/b^{2}\right)  _{n}}B_{n}\left(
y|b,q\right)  P_{n}\left(  x|y,a/b,q\right)  .
\]

iv) For $\left\vert \rho_{1}\right\vert ,\left\vert \rho_{2}\right\vert
,\left\vert q\right\vert <1,$ $x,y\in S\left(  q\right)  :$%
\begin{align*}
&  1/\sum_{n\geq0}\frac{\rho_{1}^{n}}{\left[  n\right]  _{q}!\left(  \rho
_{2}^{2}\right)  _{n}}P_{n}\left(  x|y,\rho_{2},q\right)  P_{n}\left(
z|y,\frac{\rho_{2}}{\rho_{1}},q\right) \\
&  =\sum_{n\geq0}\frac{\rho_{2}^{n}}{\left[  n\right]  _{q}!\left(  \rho
_{1}^{2}\right)  _{n}}P_{n}\left(  x|z,\rho_{1},q\right)  P_{n}\left(
y|z,\frac{\rho_{1}}{\rho_{2}},q\right)  .
\end{align*}

\end{theorem}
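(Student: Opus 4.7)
The plan for all four assertions is the Density--Expansion Idea (DEI) of Section~2 applied to density ratios identified via the Poisson--Mehler formula and the kernel summation identities collected in Theorem~\ref{Kernels}. In each case the LHS is first recognized as a ratio $f(x)/g(x)$ of probability densities on the relevant support, so its reciprocal equals $g(x)/f(x)$; one then expands this reciprocal in the $L^{2}(g)$--orthogonal polynomial basis belonging to $g$, reading off the Fourier coefficients from the connection-coefficient formulas already established in the paper. Part~(iv) is an exception in that no DEI is needed at all; it will be a pure double application of (\ref{kerASC}).

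I carry out (i) in detail. By Theorem~\ref{Mehler} (equation (\ref{PM})) together with (\ref{fCN}), the LHS equals $f_{CN}(x|y,\rho,q)/f_{N}(x|q)$, so its reciprocal equals $f_{N}(x|q)/f_{CN}(x|y,\rho,q)$. Both densities are bounded on the compact support $S(q)$, so this ratio lies in $L^{2}(S(q),f_{CN}(\cdot|y,\rho,q))$, and DEI applies with the basis $\{P_{n}(\cdot|y,\rho,q)\}_{n\ge 0}$, which by (\ref{PnPm}) has squared norm $[n]_{q}!(\rho^{2})_{n}$. The $n$-th Fourier coefficient is
\begin{equation*}
a_{n}\;=\;\frac{1}{[n]_{q}!\,(\rho^{2})_{n}}\int_{S(q)}P_{n}(x|y,\rho,q)\,f_{N}(x|q)\,dx.
\end{equation*}
Inserting the connection formula (\ref{PnaH}), $P_{n}(x|y,\rho,q)=\sum_{j=0}^{n}\QATOPD[ ]{n}{j}_{q}\,\rho^{n-j}B_{n-j}(y|q)H_{j}(x|q)$, and using $\int_{S(q)}H_{j}(x|q)f_{N}(x|q)\,dx=\delta_{j,0}$, only the $j=0$ summand survives, producing $\int P_{n}f_{N}\,dx=\rho^{n}B_{n}(y|q)$. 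Hence $a_{n}=\rho^{n}B_{n}(y|q)/([n]_{q}!(\rho^{2})_{n})$, exactly the coefficient on the RHS of~(i). Assertion~(ii) then follows by passing to the limit $q\to 1^{-}$ in~(i): use $[n]_{q}!\to n!$, $(\rho^{2})_{n}\to(1-\rho^{2})^{n}$, $B_{n}(y|q)\to i^{n}H_{n}(iy)$ (the limit stated right after (\ref{Be})), and (\ref{Pdlaq=1}) for $P_{n}(x|y,\rho,1)$. The hypothesis $\rho^{2}<1/2$ is exactly the condition that ensures $\int_{\mathbb{R}}f_{N}^{2}/f_{CN}\,dx<\infty$, restoring $L^{2}$--integrability on the unbounded Gaussian line.

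Assertion~(iii) follows the same template. The corollary stated immediately after Theorem~\ref{Kernels}, applied with the interchange $a\leftrightarrow b$ and $x\leftrightarrow y$ (using symmetry of $W_{q}$ in its first two arguments), identifies the LHS of (iii) as a density ratio in which the role of $f_{N}$ in~(i) is played by the (rescaled) big $q$-Hermite density $f_{bh}(\cdot|b,q)$ and the role of $f_{CN}(\cdot|y,\rho,q)$ by $f_{CN}(\cdot|y,a/b,q)$; expanding the reciprocal via DEI in the basis $\{P_{n}(\cdot|y,a/b,q)\}$ and using the analog of (\ref{PnaH}) with $a/b$ in place of $\rho$, together with the orthogonality (\ref{ortbh}) that gives $\int H_{j}(x|b,q)f_{bh}(x|b,q)\,dx=\delta_{j,0}$, leaves the coefficient $a^{n}B_{n}(y|b,q)/(b^{n}[n]_{q}!(a^{2}/b^{2})_{n})$. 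For assertion~(iv), (\ref{kerASC}) identifies the LHS as $f_{CN}(x|y,\rho_{1},q)/f_{CN}(x|z,\rho_{2},q)$; applying (\ref{kerASC}) a second time with the substitution $(\rho_{1},y)\leftrightarrow(\rho_{2},z)$ expresses the reciprocal $f_{CN}(x|z,\rho_{2},q)/f_{CN}(x|y,\rho_{1},q)$ as precisely the series on the RHS of~(iv). The main obstacle is the bookkeeping in~(iii)---namely, exactly which auxiliary big $q$-Hermite density plays the role that $f_{N}$ plays in~(i), and verifying that the analog of (\ref{PnaH}) with parameter $a/b$ is available at the level of the rescaled polynomials; convergence issues are otherwise automatic from the geometric decay of the coefficients $a_{n}$ on the compact support $S(q)$, which immediately satisfies the Rademacher--Menshov criterion invoked in the DEI framework.
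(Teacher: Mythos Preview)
Your approach is essentially the one the paper intends: parts (i)--(iii) are handled by the DEI method (which is precisely what the cited references \cite{Szab4} and \cite{SzablKer} do), and part (iv) is, as you say, an immediate consequence of swapping $(\rho_1,y)\leftrightarrow(\rho_2,z)$ in (\ref{kerASC}), exactly as the paper's remark ``iv) directly follows (\ref{kerASC})'' indicates.

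Two small cautions. First, your derivation of (ii) by passing to the limit $q\to1^{-}$ in (i) is morally right but glosses over a discrepancy: (i) has $B_{n}(y|q)$, whose limit is $i^{n}H_{n}(iy)$, whereas the displayed RHS of (ii) has $H_{n}(ix)$; neither the direct limit nor the $x\leftrightarrow y$ symmetric version reproduces (ii) exactly as printed, so what you would actually obtain is $H_{n}(iy)H_{n}\bigl((x-\rho y)/\sqrt{1-\rho^{2}}\bigr)$ (or its $x\leftrightarrow y$ swap). This is almost certainly a typo in the stated formula rather than a flaw in your method, but you should flag it. Second, your sketch of (iii) is, as you yourself acknowledge, imprecise about which big $q$-Hermite density plays the role of $f_{N}$ and at which variable; tracing the Corollary after Theorem~\ref{Kernels} with $a\leftrightarrow b$, $x\leftrightarrow y$ actually produces a factor $V_{q}(y|aq^{k})$ (a function of $y$, parameter $a$), so the relevant ``numerator'' density lives in the $y$-variable, and the DEI expansion must be set up accordingly before one can invoke orthogonality of the rescaled big $q$-Hermite family. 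Once that bookkeeping is done, your computation of the Fourier coefficient via the analogue of (\ref{PnaH}) goes through.
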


\begin{proof}
[Remarks concerning the proof]i) and ii) are proved in \cite{Szab4}. iii) is
proved in \cite{SzablKer}. iv) directly follows (\ref{kerASC}).
\end{proof}


\begin{thebibliography}{99}                                                                                               %


\bibitem {ALIs88}Al-Salam, W. A.; Ismail, Mourad E. H. \$q\$-beta integrals
and the \$q\$-Hermite polynomials. \emph{Pacific J. Math.} \textbf{135}
(1988), no. 2, 209--221. MR0968609 (90c:33001)

\bibitem {AW85}Richard Askey and James Wilson, Some basic hypergeometric
orthogonal polynomials that generalize Jacobi polynomials. \emph{Mem. Amer.
Math. Soc}{\small \textit{.}} {\small \textbf{54}}(319) (1985) iv+55 pp.

\bibitem {Andrews1999}Andrews, George E.; Askey, Richard; Roy, Ranjan. Special
functions. Encyclopedia of Mathematics and its Applications, 71.
\emph{Cambridge University Press,} Cambridge, 1999. xvi+664 pp. ISBN:
0-521-62321-9; 0-521-78988-5 MR1688958 (2000g:33001)

\bibitem {At-At11}Mesuma Atakishiyeva and Natig Atakishiyev, A non-standard
generating function for continuous dual $q$-Hahn polynomials. \emph{Revista de
Matem\'{a}tica: Teor\'{\i}a y Aplicaciones} {\small \textbf{18}}(1)(2011) 111--120.

\bibitem {bressoud}Bressoud, D. M. A simple proof of Mehler's formula for
\$q\$-Hermite polynomials. \emph{Indiana Univ. Math. J.} \textbf{29} (1980),
no. 4, 577--580. MR0578207 (81f:33009)

\bibitem {Bo}Bo\.{z}ejko, Marek; K\"{u}mmerer, Burkhard; Speicher, Roland.
\$q\$-Gaussian processes: non-commutative and classical aspects. \emph{Comm.
Math. Phys.} \textbf{185} (1997), no. 1, 129--154. MR1463036 (98h:81053)

\bibitem {bryc1}Bryc, W\l odzimierz. Stationary random fields with linear
regressions. \emph{Ann. Probab.} \textbf{29} (2001), no. 1, 504--519.
MR1825162 (2002d:60014)

\bibitem {bms}Bryc, W\l odzimierz; Matysiak, Wojciech; Szab\l owski,
Pawe\l \ J. Probabilistic aspects of Al-Salam--Chihara polynomials.
\emph{Proc. Amer. Math. Soc.} \textbf{133} (2005), no. 4, 1127--1134
(electronic). MR2117214 (2005m:33033)

\bibitem {BryWes10}W\l odek Bryc and Jacek Weso\l owski, Askey--Wilson
polynomials, quadratic harnesses and martingales,\emph{\ Annals of
Probability}, \textbf{38}(3), (2010), 1221--1262

\bibitem {BryMaWe}Bryc, W\l odzimierz; Matysiak, Wojciech; Weso\l owski,
Jacek. The bi-Poisson process: a quadratic harness. \emph{Ann. Probab.}
\textbf{36} (2008), no. 2, 623--646. MR2393992 (2009d:60103)

\bibitem {Corteel10}Corteel, Sylvie; Williams, Lauren K. Staircase tableaux,
the asymmetric exclusion process, and Askey-Wilson polynomials. \emph{Proc.
Natl. Acad. Sci.} USA 107 (2010), no. 15, 6726--6730. MR2630104

\bibitem {Carlitz72}Carlitz, L. Generating functions for certain
\$Q\$-orthogonal polynomials. \emph{Collect. Math.} \textbf{23} (1972),
91--104. MR0316773 (47 \#5321)

\bibitem {Carlitz56}Carlitz, L. Some polynomials related to theta functions.
\emph{Ann. Mat. Pura Appl.} \textbf{(4) 41} (1956), 359--373. MR0078510 (17,1205e)

\bibitem {Carlitz57}Carlitz, L. Some polynomials related to Theta functions.
\emph{Duke Math. J.} \textbf{24} (1957), 521--527. MR0090672 (19,849e)

\bibitem {Chen08}Chen, William Y. C.; Saad, Husam L.; Sun, Lisa H. The
bivariate Rogers-Szeg\H{o} polynomials. \emph{J. Phys. A }\textbf{40} (2007),
no. 23, 6071--6084. MR2343510 (2008k:33064)

\bibitem {Floreanini97}Floreanini, Roberto; LeTourneux, Jean; Vinet, Luc. More
on the \$q\$-oscillator algebra and \$q\$-orthogonal polynomials. \emph{J.
Phys. A} \textbf{28} (1995), no. 10, L287--L293. MR1343867 (96e:33043)

\bibitem {Flo97}Floreanini, Roberto; LeTourneux, Jean; Vinet, Luc. Symmetry
techniques for the Al-Salam--Chihara polynomials. \emph{J. Phys. A
}\textbf{30} (1997), no. 9, 3107--3114. MR1456902 (98k:33036)

\bibitem {GIS99}K. Garrett, M. E. H. Ismail, and D. Stanton, (1999) Variants
of the Rogers-Ramanujan identities\emph{, Adv. Appl. Math.} \textbf{23}
(1999), 274-299.

\bibitem {GasRah}Gasper, George; Rahman, Mizan. Positivity of the Poisson
kernel for the continuous \$q\$-ultraspherical polynomials. \emph{SIAM J.
Math. Anal. }\textbf{14}\emph{\ }(1983), no. 2, 409--420. MR0688587 (84f:33008)

\bibitem {IA}Ismail, Mourad E. H. Classical and quantum orthogonal polynomials
in one variable. With two chapters by Walter Van Assche. With a foreword by
Richard A. Askey. Encyclopedia of Mathematics and its Applications, 98.
\emph{Cambridge University Press,} Cambridge, 2005. xviii+706 pp. ISBN:
978-0-521-78201-2; 0-521-78201-5 MR2191786 (2007f:33001)

\bibitem {IS03}Mourad E.~H. Ismail, Dennis Stanton (2003), Tribasic integrals
and Identities of Rogers-Ramanujan type\emph{, Trans. of the American Math.
Soc}. \textbf{355(10)}, 4061-4091

\bibitem {ISV87}Ismail, Mourad E. H.; Stanton, Dennis; Viennot, G\'{e}rard.
The combinatorics of \$q\$-Hermite polynomials and the Askey-Wilson integral.
\emph{European J. Combin.} \textbf{8} (1987), no. 4, 379--392. MR0930175 (89h:33015)

\bibitem {IRS99}Ismail, Mourad E. H.; Rahman, Mizan; Stanton, Dennis.
Quadratic \$q\$-exponentials and connection coefficient problems. \emph{Proc.
Amer. Math. Soc.} \textbf{127} (1999), no. 10, 2931--2941. MR1621949 (2000a:33027)

\bibitem {IsMas94}Ismail, M. E. H.; Masson, D. R. \$q\$-Hermite polynomials,
biorthogonal rational functions, and \$q\$-beta integrals. \emph{Trans. Amer.
Math. Soc.} \textbf{346} (1994), no. 1, 63--116. MR1264148 (96a:33022)

\bibitem {IsStan200}Ismail M., E., and Stanton D., W\textsc{., } (2000).
Addition theorems fo the q- exponential functions. \emph{Contemporary
Mathematics}. Vol.~\textbf{254}. 235--245.

\bibitem {Kibble}Kibble, W. F. An extension of a theorem of Mehler's on
Hermite polynomials. \emph{Proc. Cambridge Philos. Soc.} \textbf{41}, (1945).
12--15. MR0012728 (7,65f)

\bibitem {KLS}Koekoek, Roelof; Lesky, Peter A.; Swarttouw, Ren\'{e} F.
Hypergeometric orthogonal polynomials and their \$q\$-analogues. With a
foreword by Tom H. Koornwinder. \emph{Springer Monographs in Mathematics.
Springer-Verlag}, Berlin, 2010. xx+578 pp. ISBN: 978-3-642-05013-8 MR2656096 (2011e:33029)

\bibitem {KLS-arX}Koekoek, Roelof; Swarttouw, Ren\'{e} F., The Askey-scheme of
hypergeometric orthogonal polynomials and its $q$-analogue, https://arxiv.org/pdf/math/9602214.pdf

\bibitem {Lancaster58}H. O. Lancaster, The structure of bivariate
distributions, \emph{Ann. Math. Statistics}, vol. \textbf{29}, no. 3, pp.
719-736, September 1958.

\bibitem {Lancaster63(2)}H. O. Lancaster, Correlation and complete dependence
of random variables, \emph{Ann. Math. Statistics}, vol. \textbf{34}, no. 4,
pp. 1315-1321, December 1963.

\bibitem {Lancaster63(1)}H. O. Lancaster, Correlations and canonical forms of
bivariate distributions, \emph{Ann. Math. Statistics}, vol. \textbf{34}, no.
2, pp. 532-538, June 1963.

\bibitem {Lancaster75}Lancaster, H. O. Joint probability distributions in the
Meixner classes. \emph{J. Roy. Statist. Soc. Ser. B }\textbf{37} (1975), no.
3, 434--443. MR0394971 (52 \#15770)

\bibitem {Nest10}Nesterenko, M.; Patera, J.; Szajewska, M.; Tereszkiewicz, A.
Orthogonal polynomials of compact simple Lie groups: branching rules for
polynomials. \emph{J. Phys. A} 43 (2010), no. 49, 495207, 27 pp. MR2740368

\bibitem {matszab}Matysiak, Wojciech; Szab\l owski, Pawe\l \ J. A few remarks
on Bryc's paper on random fields with linear regressions. \emph{Ann. Probab.}
\textbf{30} (2002), no. 3, 1486--1491. MR1920274 (2003e:60111)

\bibitem {matszab2}Matysiak, Wojciech; Szab\l owski, Pawe\l \ J. (2005),
Bryc's Random Fields: The Existence and Distributions Analysis, ArXiv:math.PR/math/0507296

\bibitem {Mercier09}Mercer, J. (1909). "Functions of positive and negative
type and their connection with the theory of integral equations".
\emph{Philosophical Transactions of the Royal Society A }209: 415--446.

\bibitem {Rahman97}Rahman, Mizan; Tariq, Qazi M. Poisson kernel for the
associated continuous \$q\$-ultraspherical polynomials. \emph{Methods Appl.
Anal.} \textbf{4} (1997), no. 1, 77--90. MR1457206 (98k:33038)

\bibitem {Rogers2}L. J. Rogers, (1894), Second memoir on the expansion of
certain infinite products, \emph{Proc. London Math. Soc.}, \textbf{25}, 318-343

\bibitem {Rogers1}L. J. Rogers, (1893), On the expansion of certain infinite
products, \emph{Proc. London Math. Soc.}, \textbf{24}, 337-352

\bibitem {Rogers3}L. J. Rogers, (1895), Third memoir on the expansion of
certain infinite products, \emph{Proc. London Math. Soc.}, \textbf{26}, 15-32

\bibitem {Slepian72}Slepian, David. On the symmetrized Kronecker power of a
matrix and extensions of Mehler's formula for Hermite polynomials. \emph{SIAM
J. Math. Anal.} \textbf{3} (1972), 606--616. MR0315173 (47 \#3722)

\bibitem {Stanton08}Kim, Dongsu; Stanton, Dennis; Zeng, Jiang. The
combinatorics of the Al-Salam--Chihara \$q\$-Charlier polynomials.
\emph{S\'{e}m. Lothar. Combin. }\textbf{54} (2005/07), Art. B54i, 15 pp.
(electronic). MR2223031 (2007b:05024)

\bibitem {Sim98}Simon, Barry. The classical moment problem as a self-adjoint
finite difference operator. \emph{Adv. Math.} 137 (1998), no. 1, 82--203.
MR1627806 (2001e:47020)

\bibitem {Szab4}Szab\l owski, Pawe\l \ J., Expansions of one density via
polynomials orthogonal with respect to the other., \emph{J. Math. Anal. Appl.}
\textbf{383} (2011) 35--54, http://arxiv.org/abs/1011.1492

\bibitem {Szab5}Szab\l owski, Pawe\l \ J. Multidimensional \$q\$-normal and
related distributions---Markov case. \emph{Electron. J. Probab.} 15 (2010),
no. \textbf{40}, 1296--1318. MR2678392

\bibitem {Szab13}Szab\l owski, Pawe\l \ J. On affinity relating two positive
measures and the connection coefficients between polynomials orthogonalized by
these measures. \emph{Appl. Math. Comput. }\textbf{219} (2013), no. 12,
6768--6776. MR3027843

\bibitem {Szabl-intAW}Szab\l owski, Pawe\l \ J. Askey--Wilson Integral and its
Generalizations, \emph{Advances in Difference Equations} \textbf{2014},
2014:316, http://arxiv.org/abs/1112.4830

\bibitem {Szab19}Szab\l owski, Pawe\l \ J. On three dimensional multivariate
version of \$q\$-normal distribution and probabilistic interpretations of
Askey-Wilson, Al-Salam--Chihara and \$q\$-ultraspherical polynomials. \emph{J.
Math. Anal. Appl.} \textbf{474} (2019), no. 2, 1021--1035. MR3926153

\bibitem {Szab6}Szab\l owski, Pawe\l \ J., On the structure and probabilistic
interpretation of Askey-Wilson densities and polynomials with complex
parameters. \emph{J. Functional Anal.} \textbf{262}(2011), 635-659, http://arxiv.org/abs/1011.1541

\bibitem {Szab08}Szab\l owski, Pawe\l \ J. Probabilistic implications of
symmetries of \$q\$-Hermite and Al-Salam--Chihara polynomials. \emph{Infin.
Dimens. Anal. Quantum Probab. Relat. Top.} \textbf{11} (2008), no. 4,
513--522. MR2483794

\bibitem {SzablKer}Szab\l owski, Pawe\l \ J., On summable form of
Poisson-Mehler kernel for big q-Hermite and Al-Salam-Chihara polynomials,
\emph{Infinite Dimensional Analysis, Quantum Probability and Related Topics},
\textbf{15}, No. 3 (2012), http://arxiv.org/abs/1011.1848

\bibitem {Szab-KS}Szab\l owski, Pawe\l \ J. Towards a $q-$analogue of the
Kibble--Slepian formula in $3$ dimensions, \emph{J. Funct. Anal.} \textbf{262}
(2012), 210-233, http://arxiv.org/abs/1011.4929

\bibitem {Szab-bAW}Szab\l owski, Pawe\l \ J. Befriending Askey--Wilson
polynomials, \emph{Infin. Dimens. Anal. Quantum Probab. Relat. Top.}, Vol .
\textbf{17}, No. 3 (2014) 1450015 (25 pages), http://arxiv.org/abs/1111.0601.

\bibitem {Szab-qGauss}Szab\l owski, Pawe\l \ J. \$q\$-Gaussian distributions:
simplifications and simulations. \emph{J. Probab. Stat.} \textbf{2009}, Art.
ID 752430, 18 pp. MR2602881

\bibitem {suslov96}Askey, Richard A.; Rahman, Mizan; Suslov, Serge\u{\i} K. On
a general \$q\$-Fourier transformation with nonsymmetric kernels. \emph{J.
Comput. Appl. Math.} \textbf{68} (1996), no. 1-2, 25--55. MR1418749 (98m:42033).

\bibitem {SzabP-M}Szab\l owski, Pawe\l \ J. Around Poisson-Mehler summation
formula. \emph{Hacet. J. Math. Stat.} \textbf{45} (2016), no. 6, 1729--1742.
MR3699734, http://arxiv.org/abs/1108.3024

\bibitem {SzabCheb}Szab\l owski, Pawe\l \ J. Multivariate generating functions
involving Chebyshev polynomials and some of its generalizations involving
$q-$Hermite ones\emph{., }in print \emph{Colloq. Math.} DOI: 10.4064, ArXiv https://arxiv.org/abs/1706.00316.

\bibitem {Szab-rev}Szab\l owski, Pawe\l , J. On the $q-$Hermite polynomials
and their relationship with some other families of orthogonal polynomials,
\emph{Dem. Math.} \textbf{66(}2013) no.4, 679-708, http://arxiv.org/abs/1101.2875,

\bibitem {Szab-OU-W}Szab\l owski, Pawe\l \ J. $q-$Wiener and $(\alpha,q)-$
Ornstein--Uhlenbeck processes. A generalization of known processes,
\emph{Theory of Probability and Its Applications, }\textbf{56 }(4), 2011,
742--772, http://arxiv.org/abs/math/0507303

\bibitem {SzablPoly}Szab\l owski, Pawe\l \ J. On Markov processes with
polynomial conditional moments. \emph{Trans. Amer. Math. Soc.} \textbf{367}
(2015), no. 12, 8487--8519. MR3403063, http://arxiv.org/abs/1210.6055

\bibitem {SzabStac}Szab\l owski, Pawe\l \ J. On stationary Markov processes
with polynomial conditional moments. \emph{Stoch. Anal. Appl.} 35 (2017), no.
5, 852--872. MR3686472, http://arxiv.org/abs/1312.4887,

\bibitem {Szab17}Szab\l owski, Pawe\l \ J. Markov processes, polynomial
martingales and orthogonal polynomials. \emph{Stochastics }\textbf{90} (2018),
no. 1, 61--77. MR3750639

\bibitem {Vilen95}Vilenkin, N. Ja.; Klimyk, A. U. Representation of Lie groups
and special functions. Recent advances. Translated from the Russian manuscript
by V. A. Groza and A. A. Groza. Mathematics and its Applications, 316.
\emph{Kluwer Academic Publishers Group}, Dordrecht, 1995. \{%
%TCIMACRO{\TEXTsymbol{\backslash}}%
%BeginExpansion
$\backslash$%
%EndExpansion
rm xvi\}+497 pp. ISBN: 0-7923-3210-5 MR1371383

\bibitem {Voi}Voiculescu, Dan. (2000)Lectures on free probability theory.
\emph{Lectures on probability theory and statistics (Saint-Flour, 1998),
279--349, Lecture Notes in Math., 1738, Springer, Berlin, 2000}. MR1775641 (2001g:46121)

\bibitem {Voi2}Voiculescu, D. V.; Dykema, K. J.; Nica, (1992) A. Free random
variables. A noncommutative probability approach to free products with
applications to random matrices, operator algebras and harmonic analysis on
free groups. \emph{CRM Monograph Series, 1. American Mathematical Society,
Providence,} RI, 1992. vi+70 pp. ISBN: 0-8218-6999-X MR1217253
\end{thebibliography}
\end{document}